\definecolor{mygreen}{HTML}{43a047}
\definecolor{hone}{HTML}{d50000}
\definecolor{htwo}{HTML}{283593}
\newcommand{\ul}[1]{\underline{#1}}
\def\usmall#1{#1}
\begin{document}
\markboth{Barbara Kaltenbacher, Vanja Nikoli\'c}{On the Jordan--Moore--Gibson--Thompson equation}
\title{ON THE JORDAN--MOORE--GIBSON--THOMPSON EQUATION:\\
WELL-POSEDNESS WITH QUADRATIC GRADIENT NONLINEARITY AND SINGULAR LIMIT FOR VANISHING RELAXATION TIME
}
\author{BARBARA KALTENBACHER}
\address{Alpen-Adria-Universit\"at Klagenfurt, Institut f\"ur Mathematik, \\ Universit\"atsstra\ss e 65–67, 9020~Klagenfurt, Austria. \\
barbara.kaltenbacher@aau.at}
\author{VANJA NIKOLI\'C}
\address{Technical University of Munich,  Department of Mathematics, \\
	Boltzmannstra\ss e 3, 85748 Garching, Germany.\\
vanja.nikolic@ma.tum.de}
\maketitle

\begin{abstract}
In this paper, we consider the Jordan--Moore--Gibson--Thompson equation, a third order in time wave equation describing the nonlinear propagation of sound that avoids the infinite signal speed paradox of classical second order in time strongly damped models of nonlinear acoustics, such as the Westervelt and the Kuznetsov equation. We show well-posedness in an acoustic velocity potential formulation with and without gradient nonlinearity, corresponding to the Kuznetsov and the Westervelt nonlinearities, respectively.
Moreover, we consider the limit as the parameter of the third order time derivative that plays the role of a relaxation time tends to zero, which again leads to the classical Kuznetsov and Westervelt models.
To this end, we establish appropriate energy estimates for the linearized equations and employ fixed-point arguments for well-posedness of the nonlinear equations.  The theoretical results are illustrated by numerical experiments.
\end{abstract}

\keywords{nonlinear acoustics, energy estimates, singular limit}

\ccode{AMS Subject Classification: 
35L77, 
35L72, 
35L80, 
}

\section{Introduction}
Nonlinear propagation of sound arises in numerous applications. We here especially mention high-intensity ultrasound used in medical imaging and therapy, but also for industrial purposes, such as ultrasound cleaning or welding; see, e.g., Refs.~\refcite{abramov,dreyer2000,uffc2002}, and the given references therein.
For the physical fundamentals of nonlinear acoustics, we refer to, e.g., Refs.~\refcite{Crighton79,EnfloHedberg,HamiltonBlackstock98,Kuznetsov71,MakarovOchmann96,MakarovOchmann97b,MakarovOchmann97a,Pierce89}.

Its physical and mathematical description involves the acoustic particle velocity $\vec{v}$, the acoustic pressure $p$, as well as the mass density $\varrho$, which can be decomposed into constant and fluctuating components
\[
\vec{v}=\vec{v}_0+\vec{v}_\sim\,, \quad p=p_0+p_\sim\,, \quad \varrho=\varrho_0+\varrho_\sim,
\]
where in the applications mentioned above, the ambient flow vanishes; i.e, $\vec{v}_0=0$. Furthermore, we have the balances of mass, momentum and sometimes of energy, complemented with an equation of state that relates the mass density to the pressure.  
Combination of these balance and material laws yields wave-type partial differential equations that are often second order in space and time, but higher order in time equations  play an important role as well. It is one of these third order in time equations that we focus on in this paper.   

One of the most established models of nonlinear acoustics is Kuznetsov's equation\cite{Kuznetsov71,LesserSeebass68}
\begin{equation}\label{Kuznetsov0}
{p_\sim}_{tt}- c^2\Delta p_\sim - \delta \Delta {p_\sim}_t =
\left(\frac{1}{\varrho_0 c^2}\frac{B}{2A} p_\sim^2 + \varrho_0 |\vec{v}|^2\right)_{tt}\,,
\end{equation}
where $c$ is the speed of sound, $\delta$ is the diffusivity of sound
\[
\delta = \frac{1}{\varrho_0}\left( \frac{4 \mu_V}{3} + \zeta_V \Big) + \frac{\kappa}{\varrho_0}
\Big( \frac{1}{c_V} - \frac{1}{c_p} \right),
\]
and the velocity is related to the pressure via some balance of forces,
\begin{equation}\label{eq:forcebal}
\varrho_0\vec{v}_t=-\nabla p\,.
\end{equation}
By ignoring local nonlinear effects modeled by the quadratic velocity term, we arrive at the Westervelt equation\cite{Westervelt63}
\begin{equation}\label{Westervelt0}
{p_\sim}_{tt}- c^2\Delta p_\sim - \delta \Delta {p_\sim}_t =
\frac{\beta_a}{\varrho_0 c^2} {p_\sim^2}_{tt},
\end{equation}
with $\beta_a = 1 + B/(2A)$. In terms of the acoustic velocity potential $\psi$ satisfying $\vec{v}=-\nabla\psi$ and $p=\varrho_0\psi_t$, these equations can be rewritten as 
\begin{equation}\label{Kuznetsov}
\psi_{tt}-c^2\Delta \psi - \delta\Delta \psi_t
= \left(\frac{1}{c^2}\frac{B}{2A}(\psi_t)^2+|\nabla\psi|^2\right)_t\,,
\end{equation}
and 
\begin{equation}\label{Westervelt}
\psi_{tt}-c^2\Delta \psi - \delta\Delta \psi_t
= \left(\frac{\beta_a}{c^2}(\psi_t)^2\right)_t\,,
\end{equation}
respectively.

As has been observed in, e.g., Ref.~\refcite{JordanMaxwellCattaneo14}, the use of classical Fourier's law 
\[
\mathbf{q}=-K\nabla\vartheta
\]
where $\vartheta$, $\mathbf{q}$, and $K$ denote the absolute temperature, heat flux vector, and thermal conductivity, respectively, leads to an infinite signal speed paradox, which appears to be unnatural in wave propagation. Therefore in Ref.~\refcite{JordanMaxwellCattaneo14}, several other constitutive relations for the heat flux are considered within the derivation of nonlinear acoustic wave equations. Among these is the Maxwell--Cattaneo law
\[
\tau\mathbf{q}_t+\mathbf{q}=-K\nabla\vartheta,
\]
where $\tau$ is a positive constant accounting for relaxation (the relaxation time), whose combination with the above mentioned balance equations and the equation of state leads to the third order in time PDE model:
\begin{equation}\label{KuznetsovMC}
\tau\psi_{ttt}+\psi_{tt}-c^2\Delta \psi - b\Delta \psi_t
= \left(\frac{1}{c^2}\frac{B}{2A}(\psi_t)^2+|\nabla\psi|^2\right)_t\,,
\end{equation}
where \begin{equation}\label{btau}
b=\delta+\tau c^2\,.
\end{equation}
This model is known in the literature as the Jordan--Moore--Gibson--Thompson (JMGT) equation\cite{KLP12_JordanMooreGibson} and we refer to Ref.~\refcite{JordanMaxwellCattaneo14} for its derivation. If one neglects local nonlinear effects modeled by the quadratic velocity term in \eqref{KuznetsovMC}, one arrives at
\begin{equation}\label{WesterveltMC}
\tau\psi_{ttt}+\psi_{tt}-c^2\Delta \psi - b\Delta \psi_t
= \left(\frac{\beta_a}{c^2}(\psi_t)^2\right)_t,
\end{equation}
analogously to the reduction of the Kuznetsov to the Westervelt equation\cite{KLP12_JordanMooreGibson}. We will refer to equation \eqref{KuznetsovMC} as the Kuznetsov-type and to \eqref{WesterveltMC} as the Westervelt-type JMGT equation. Obviously, equations \eqref{KuznetsovMC} and \eqref{WesterveltMC} formally reduce to \eqref{Kuznetsov} and \eqref{Westervelt} upon setting $\tau=0$. The present work is, in part, devoted to the rigorous justification of passing to the limit $\tau\to0$ in \eqref{KuznetsovMC} and \eqref{WesterveltMC}. \\
\indent In Ref.~\refcite{KLM12_MooreGibson} and, more comprehensively, in Ref.~\refcite{MarchandMcDevittTriggiani12}, the linearized equation
\begin{equation}\label{WesterveltMC_lin}
\tau\psi_{ttt}+\alpha\psi_{tt}-c^2\Delta \psi - b\Delta \psi_t
= f\,,
\end{equation}
often called the Moore--Gibson--Thompson equation, is studied using semigroup techniques; see also Refs.~\refcite{DellOroPata} and~\refcite{PellicerSolaMorales}. As it turns out, the exponential stability of the trajectories depends on the critical parameter given by
\begin{equation}\label{eq:gamma}
\gamma := \alpha  - \frac{\tau c^2}{b} \,.
\end{equation}
In the case of a constant coefficient $\alpha$, exponential decay of the energy function
\begin{equation}\label{eq:EWK}
E[\psi](t) = \tfrac12\left\{|\psi_{t}(t)|^2 +|\nabla \psi(t)|^2
+|\psi_{tt}(t)|^2 +|\nabla \psi_t(t)|^2 + |\Delta \psi(t)|^2 \right\}
\end{equation} 
requires $\gamma$ to be strictly positive. The case $\gamma<0$ is unstable and the case $\gamma=0$ marginally stable.
An intuitive explanation for this phenomenon is the following: According to the linear wave part of the equation, we can trade $\alpha \psi_{tt}$ for $c^2\Delta \psi$, thus also $\tau \psi_{ttt}$ for $\frac{\tau c^2}{\alpha}\Delta \psi_t$ in order to relate \eqref{WesterveltMC_lin} back to the linearization of \eqref{Westervelt}
\[
\alpha\psi_{tt}-c^2\Delta \psi - \frac{b}{\alpha}\gamma \Delta \psi_t
= f\,,
\]
which is a strongly damped wave equation. \\
\indent We mention that the Moore--Gibson--Thompson equation \eqref{WesterveltMC_lin} is also studied in Ref.~\refcite{LiuTriggiani13}, where the problem of identifying $\gamma(t)$ from boundary measurements is considered, and in Ref.~\refcite{LasieckaWang15a} and~\refcite{LasieckaWang15b}, where the effect of additive convolution memory terms acting on $\Delta u$ and $\Delta u_t$, and their combination, respectively, is investigated. \\
\indent For a reformulation of the nonlinear equation \eqref{WesterveltMC} in terms of the acoustic pressure $p=\varrho_0\psi_t$, global in time well-posedness and exponential decay of the energy $E[p](t)$ is proven in Ref.~\refcite{KLP12_JordanMooreGibson} for small initial data $(p_0,p_1,p_2)\in H_0^1(\Omega)\cap H^2(\Omega)\times H_0^1(\Omega)\times L^2(\Omega)$, where $\Omega$ is a bounded $C^2$ smooth domain.\\
\indent One of the key elements in the above cited works on the analysis of equations \eqref{WesterveltMC} and \eqref{WesterveltMC_lin} is introduction of the auxiliary state
\begin{equation}\label{eq:z}
z:= \psi_t + \frac{c^2}{b}\psi \,.
\end{equation}
Indeed, in this manner, the third order in time equation \eqref{WesterveltMC_lin} is reduced to a linear (weakly) damped wave equation for $z$, 
\begin{equation}\label{dampedwave_z}
\tau z_{tt} +\gamma z_t - b \Delta z-\gamma\tfrac{c^2}{b} z+\gamma\tfrac{c^4}{b^2} \psi=f\,,
\end{equation}
where $-\gamma\tfrac{c^2}{b} z+\gamma\frac{c^4}{b^2} \psi$ is a lower order term. \\
\indent This approach, first of all, illustrates the fact that $\gamma$ should be non-negative to guarantee a damping behaviour of the term $\gamma z_t$. Secondly, it displays the key difference to the strongly damped second order equations \eqref{Kuznetsov} and \eqref{Westervelt}. As pointed out in Subsection 6.2.1 of Ref.~\refcite{MarchandMcDevittTriggiani12}, equation \eqref{WesterveltMC_lin} does not give rise to an analytic semigroup; see also Remark 1.3 in Ref.~\refcite{KLM12_MooreGibson}. Consequently, the operator driving the evolution does not exhibit maximal parabolic regularity\cite{LeCroneSimonett} and the Implicit function theorem argument from, e.g., Ref.~\refcite{MW13} cannot be transferred to the present setting.

\section{Main results} \label{sec:main}
This paper contributes to the analysis of the JMGT equation in two ways. \\
\indent Firstly, we prove well-posedness with a quadratic gradient nonlinearity arising when taking into account local nonlinear effects; cf. the additional $(|\nabla\psi|^2)_t$ term on the right hand side in \eqref{KuznetsovMC} compared to \eqref{WesterveltMC}. 
We base our approach on energy estimates for a reformulation of \eqref{KuznetsovMC} in the form
\begin{equation}\label{fp}
\tau\psi_{ttt}+(1-k\psi_t)\psi_{tt}-c^2\Delta \psi - b\Delta \psi_t
= 2\nabla\psi\cdot\nabla\psi_t,
\end{equation}
where we use the abbreviation $k=\frac{2}{c^2}\frac{B}{2A}$. The sign of $k$ will not matter in what follows, whereas we assume the coefficients $b$ and $c^2$ to be strictly positive. We rely on the formulation of the equations in terms of the acoustic velocity potential $\psi$ and not the acoustic pressure\cite{KLP12_JordanMooreGibson}, since it allows to include more easily the quadratic velocity term $(|\nabla\psi|^2)_t$ on the right-hand side. We note that the energy estimates required for this purpose differ from those provided in Ref.~\refcite{KLP12_JordanMooreGibson} for the equation \eqref{WesterveltMC} without the quadratic gradient nonlinearity. \\
\indent Secondly, we consider the limit $\tau\to0$ and prove that solutions of the Kuznetsov-type equation \eqref{KuznetsovMC} converge to a solution of equation \eqref{Kuznetsov} as $\tau\to0$, and analogously for the Westervelt-type equation. For this purpose, the energy estimates we will derive are crucial.
These estimates differ for the Kuznetsov-type and for the Westervelt-type version of the JMGT equation, which is why we treat these models in separate sections. \\
\indent The rest of the paper is organized as follows. In Section~\ref{sec:enest}, we consider the linearized equation \eqref{WesterveltMC_lin} with a fixed positive coefficient $\alpha$ that is possibly space and time dependent, but bounded away from zero, and an inhomogeneity $f$, as well as a fixed positive $\tau$. We prove well-posedness of this linearized model together with an energy estimate. \\
\indent Section \ref{sec:wellposed_Wes} contains a well-posedness proof for the Westervelt version \eqref{WesterveltMC} of the equation by setting $\alpha=1-k\psi_t$ and $f=0$. The proof is based on the equation \eqref{fp}, but with zero right-hand side, as the gradient nonlinearity is not present in \eqref{WesterveltMC}. This fact allows to prove local in time well-posedness for small inital data, even without any sign condition on the parameter $\gamma= \alpha  - \frac{\tau c^2}{b}$. However, the energy estimates from Section~\ref{sec:enest} do not cover the gradient nonlinearity in the Kuznetsov-type version of the JMGT equation, so that higher order energy estimates are needed. We derive them in Section~\ref{sec:enest_higher}. These involve the auxiliary function $z$  and require positivity of both $\alpha$ and $\gamma$, where the latter follows from positivity and boundedness away from zero of $\alpha$ for $\tau$ sufficiently small. \\
\indent Section~\ref{sec:wellposed_Kuz} provides the corresponding well-posedness result for the equation \eqref{KuznetsovMC} based on reformulation \eqref{fp}; i.e., setting $\alpha=1-k\psi_t$ and $f=2\nabla\psi\cdot\nabla\psi_t$.
Starting from a sufficiently small positive value and letting $\tau$ tend to zero clearly preserves the sign structure of the  coefficients, in particular of $\gamma$ and $b$, so that the energy bounds from Sections~\ref{sec:wellposed_Wes} and \ref{sec:wellposed_Kuz} can be used for justifying the limiting process $\tau\to0$ in Section \ref{sec:limits}. There we also provide a brief comparison of the resulting regularity for the limiting equations \eqref{Kuznetsov}, \eqref{Westervelt} to those already present in the literature. Finally, numerical experiments in Section \ref{sec:NumEx} illustrate the theoretical findings.
\begin{remark}[On medium parameters]
We require strict positivity of constants $c$, $\delta$, and $\tau$ (hence, also of $b=\delta+\tau c^2$) appearing in the equations for proving well-posedness of initial-boundary value problems \eqref{KuznetsovMC} and \eqref{WesterveltMC}. These are, indeed, very natural assumptions from a physical point of view; typical values of these parameters in different media can be found, for example, in Refs. \refcite{crocker1998handbook} and~\refcite{Rossing}. It is also known that, in order to establish global well-posedness for the limiting problems \eqref{Kuznetsov} and \eqref{Westervelt} in space dimensions higher than one, strict positivity of $\delta$ is needed\cite{KL09Westervelt,KL12_Kuznetsov}. The constant $k$ does not need to have a particular sign in our analysis, but will typically be non-negative in applications.
\end{remark}
\subsection{Theoretical preliminaries and assumptions}
We set here the notation and collect some useful theoretical results that we often use in the analysis. Throughout the paper, the spatial domain $\Omega \subset \mathbb{R}^d$, where $d \in \{1, 2, 3\}$, is assumed to be sufficiently smooth to admit integration by parts as well as second order elliptic regularity. \\
\indent We consider the PDEs on a bounded space time cylinder $\Omega\times(0,T)$ and impose homogeneous Dirichlet boundary conditions on $\partial\Omega$ for simplicity
\begin{equation}\label{Dirichlet}
\psi=0 \quad \mbox{ on } \partial \Omega\times(0,T);
\end{equation}
i.e., $(-\Delta):H_0^1(\Omega)\to H^{-1}(\Omega)$ is the Laplace operator equipped with homogeneous Dirichlet boundary conditions.
We expect that Neumann and impedance (absorbing) boundary conditions can be treated analogously, but lead to modifications in the energy estimates. \\
\indent The third order in time equations \eqref{KuznetsovMC} and \eqref{WesterveltMC} are complemented with initial conditions
\begin{equation}\label{eq:ini_psi}
\psi(0)=\psi_0 \,, \quad \psi_t(0)=\psi_1\,, \quad \psi_{tt}(0)=\psi_2\,,
\end{equation}
whereas for the limiting second order in time equations \eqref{Kuznetsov}, \eqref{Westervelt} as $\tau\to0$, the initial condition on $\psi_{tt}$ naturally disappears, which will be seen also in the energy estimates.
\subsubsection{Notation}
For simplicity of notation, we often omit the time interval and the spatial domain when writing norms, i.e., $\|\cdot\|_{L^p L^q}$ denotes the norm on $L^p(0,T;L^q(\Omega))$. We also abbreviate the $L^2(\Omega)$ inner product by $(\cdot,\cdot)_{L^2}$ and the $L^2(\Omega)$ norm (as well as the absolute value) by $|\cdot|$. We employ the notation $L^p(0,T;Z)$, $W^{s,p}(0,T;Z)$ for the Bochner-Sobolev spaces of time dependent functions.\\
\indent More specifically, we use dedicated function spaces for the solutions of the considered equations. We collect their notation here for future reference:
\begin{equation}\label{XWXKspaces}
\begin{aligned}
X^W :=& \,\begin{multlined}[t] W^{1, \infty}(0;T;H_0^1(\Omega) \cap H^2(\Omega)) \cap W^{2, \infty}(0,T; H_0^1(\Omega)),\\
	\cap H^3(0,T; L^2(\Omega)) \end{multlined} \\
X^K:=&\, L^\infty(0,T;H^3(\Omega))\cap W^{1,\infty}(0,T;H^2(\Omega))\cap W^{2,\infty}(0;T;H_0^1(\Omega)),\\
\bar{X}^W :=& \, W^{1, \infty}(0;T;H_0^1(\Omega) \cap H^2(\Omega)) \cap H^2(0,T; H_0^1(\Omega))\\
\bar{X}^K:=&\, L^\infty(0,T;H^3(\Omega))\cap W^{1,\infty}(0,T;H^2(\Omega))\cap H^2(0;T;H_0^1(\Omega)),
\end{aligned}
\end{equation}
with partly $\tau$-dependent norms induced by the energy estimates to be derived 
\begin{equation}\label{XWXKnorms}
\begin{aligned}
\|\psi\|_{X^W}^2:=& \,
\|\psi\|^2_{W^{1,\infty} H^2}+\|\psi_{tt}\|^2_{L^2 H^1}+ \tau \|\psi_{tt}\|^2_{L^\infty H^1}+\tau^2 \|\psi_{ttt}\|^2_{L^2L^2}, \\
\|\psi\|_{X^K}^2:=& \, \begin{multlined}[t]
\|\psi\|_{L^\infty H^3}^2+\|\psi_t+\tfrac{c^2}{b}\psi \|_{L^\infty H^2 }^2
    +\|\psi_{tt}+\tfrac{c^2}{b}\psi_t \|_{L^2 H_0^1 }^2 \\
	+\tau \|\psi_{tt}+\tfrac{c^2}{b}\psi_t \|_{L^\infty H_0^1 }^2,\end{multlined} \\
\|\psi\|_{\bar{X}^W}^2:=&\, \|\psi\|^2_{W^{1,\infty} H^2}+\|\psi_{tt}\|^2_{L^2 H^1}, \\
\|\psi\|_{\bar{X}^K}^2:=&\, \|\psi_{tt}+\tfrac{c^2}{b}\psi_t\|^2_{L^2 H^1}+\|\psi_t+\tfrac{c^2}{b}\psi\|^2_{L^\infty H^2}+\|\psi\|^2_{L^\infty H^3}.
\end{aligned}
\end{equation}
\subsubsection{Helpful inequalities} Throughout the paper, we often employ the continuous embeddings
$H^1(\Omega)\hookrightarrow L^6(\Omega)$, $H^2(\Omega) \hookrightarrow L^\infty(\Omega)$
\begin{equation}\label{embeddigs}
\|v\|_{L^6(\Omega)}\leq C_{H^1, L^6}^\Omega \|v\|_{H^1(\Omega)}
\,, \quad 
\|v\|_{L^\infty(\Omega)}\leq C_{H^2, L^\infty}^\Omega \|v\|_{H^2(\Omega)}
\end{equation}
as well as boundedness of the operator $(-\Delta)^{-1}:L^2(\Omega)\to H^2(\Omega)\cap H_0^1(\Omega)$, the Poincar\'{e}-Friedrichs inequality, 
\begin{equation}\label{ellreg-PF}
\|v\|_{H^2(\Omega)}\leq C_{(-\Delta)^{-1}}^\Omega \|-\Delta v\|_{L^2(\Omega)}
\,, \quad 
\|v\|_{H^1(\Omega)}\leq C_{PF}^\Omega \|\nabla v\|_{L^2(\Omega)},
\end{equation}
and the trace theorem
\begin{equation}\label{ellreg-PF_trace}
\|\nu\cdot \nabla v\|_{H^{-1/2}(\partial\Omega)}\leq C_{tr}^\Omega \|v\|_{H^1(\Omega)}
\,, \quad 
\|v\|_{H^{1/2}(\partial\Omega)}\leq C_{tr}^\Omega \|v\|_{H^1(\Omega)},
\end{equation}
see, e.g., Lemma 4.3 in Ref.~\refcite{McLean00}, where $\nu$ denotes the outward unit normal on the boundary of $\Omega$.

\section{Analysis of the linear damped wave equation \eqref{dampedwave_z}} \label{sec:enest}
\indent We now turn our attention to the linear equation \eqref{WesterveltMC_lin} and study the following initial-boundary value problem:
\begin{equation} \label{ibvp_linear}
\begin{aligned}
\begin{cases}
\tau\psi_{ttt}+\alpha(x,t)\psi_{tt}-c^2\Delta \psi - b\Delta \psi_t = f \quad \mbox{ in }\Omega\times(0,T), \\[1mm]
\psi=0 \quad \mbox{ on } \partial \Omega\times(0,T),\\[1mm]
(\psi, \psi_t, \psi_{tt})=(\psi_0, \psi_1, \psi_2) \quad \mbox{ in }\Omega\times \{0\},
\end{cases}
\end{aligned}
\end{equation}
under the non-degeneracy assumption that for some $\ul{\alpha}>0$
\begin{align} \label{non-degeneracy_assumption}
\alpha(t)\geq\ul{\alpha}\ \mbox{ on }\Omega \ \ \mbox{  a.e. in } \Omega \times (0,T).
\end{align}
We assume that the coefficient $\alpha$ and the source term $f$ have the following regularity:
\begin{equation}\label{eq:alphagammaf_reg_Wes}
\begin{aligned} 
&\alpha \in 
L^\infty(0,T; L^\infty(\Omega))\cap L^\infty(0,T; W^{1,3}(\Omega)), \\
&f\in H^1(0,T; L^2(\Omega)).
\end{aligned}
\end{equation}
Moreover, $\psi$ is assumed to satisfy the initial conditions \eqref{eq:ini_psi} with 
\begin{equation}\label{reg_init}
(\psi_0, \psi_1, \psi_2)\in X^W_0 :=
H_0^1(\Omega) \cap H^2(\Omega)\times H_0^1(\Omega) \cap H^2(\Omega)\times H_0^1(\Omega).
\end{equation} 
For an analysis of equation \eqref{WesterveltMC_lin} with constant coefficient $\alpha$, under the slightly weaker assumptions $$(\psi_0, \psi_1, \psi_2)\in (H_0^1(\Omega) \cap H^2(\Omega))\times H_0^1(\Omega) \times L^2(\Omega)$$ and $f\in L^1(0,T;L^2(\Omega))$, we refer to Corollary 1.2 in Ref.~\refcite{KLM12_MooreGibson}. There it is shown that $\psi\in C(0,T;H_0^1(\Omega) \cap H^2(\Omega))\cap \, C^1(0,T;H_0^1(\Omega))\cap \, C^2(0,T;L^2(\Omega))$ by means of semigroup techniques.
The assumptions \eqref{eq:alphagammaf_reg_Wes} and \eqref{reg_init} naturally arise from the energy estimates in the well-posedness proof below and lead to the stronger (as compared to Ref.~\refcite{KLM12_MooreGibson}) regularity stated in \eqref{regularity} below.

\begin{theorem} \label{th:wellposedness_lin}
	Let $c^2$, $b$, $\tau>0$, and let $T>0$ be a fixed time horizon. Let the non-degeneracy assumption \eqref{non-degeneracy_assumption} and the regularity assumptions \eqref{eq:alphagammaf_reg_Wes} and \eqref{reg_init} hold. Then there exists a unique  solution $\psi$ of the problem \eqref{ibvp_linear} that satisfies 
	\begin{equation}\label{regularity}
	\begin{aligned}
	\psi \in \, X^W :=& \, W^{1, \infty}(0;T;H_0^1(\Omega) \cap H^2(\Omega)) \cap W^{2, \infty}(0,T; H_0^1(\Omega))\\
	&\cap H^3(0,T; L^2(\Omega)).
	\end{aligned}
	\end{equation}
Furthermore, the solution fullfils the estimate
	\begin{equation}\label{energy_est_lin}
	\begin{aligned}
&  \tau^2 \|\psi_{ttt}\|^2_{L^2L^2} + \tau \|\psi_{tt}\|^2_{L^\infty H^1}+\|\psi_{tt}\|^2_{L^2 H^1}+\|\psi_t\|^2_{L^\infty H^2}
\\[1mm]
\leq&\,C(\alpha, T, \tau)\left(|\psi_0|^2_{H^2}+|\psi_1|^2_{H^2}+\tau|\psi_2|^2_{H^1}+ \| f\|^2_{L^\infty L^2}+\|f_t\|^2_{L^2 L^2}\right). 
	\end{aligned}
	\end{equation}
The constant above is given by
\begin{equation*}
\begin{aligned}
C(\alpha, T, \tau)= C_1 \, \left(1+T^3+\|\alpha\|_{L^\infty L^\infty}\right)\, \textup{exp}\,\left(C_2 \, \left(\tfrac{1}{\tau}\|\nabla \alpha\|^2_{L^\infty L^3} +1+T \right) T \right),
\end{aligned}
\end{equation*}
where $C_1$, $C_2>0$ do not depend on $\tau, T$, or $\alpha$.

If additionally we assume that $\|\nabla \alpha\|_{L^\infty L^3}$ is sufficiently small so that
\begin{equation}\label{gradalphasmall}
\|\nabla \alpha\|_{L^\infty L^3}<\frac{\underline{\alpha}}{C^\Omega_{H^1,L^6} }
\end{equation}
holds, then \eqref{energy_est_lin} is valid with an upper bound that is independent of $\tau$, i.e., 
\begin{equation}\label{CT}
\begin{aligned}
C(\alpha, T, \tau)= C(\alpha,T) = C_1 \, \left(1+T^3+\|\alpha\|_{L^\infty L^\infty}\right)\, \textup{exp}\,\left(C_2 \, (1+T) T \right).
\end{aligned}
\end{equation}

\end{theorem}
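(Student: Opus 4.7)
The plan is a Faedo--Galerkin construction in the basis of Dirichlet eigenfunctions of $-\Delta$, together with two levels of $\tau$-weighted a priori estimates that survive the passage to the limit. Because $\alpha$ is variable in space and time, I avoid the clean reduction to a damped wave equation for $z=\psi_t+\tfrac{c^2}{b}\psi$ from \eqref{dampedwave_z} and work directly with \eqref{ibvp_linear}; the positivity $\ul{\alpha}>0$ supplies the structural gain needed to absorb lower-order terms through the pairings $(\alpha\psi_{tt},\psi_{tt})_{L^2}$ and $(\alpha\psi_{tt},-\Delta\psi_{tt})_{L^2}$. Existence will then follow by extracting a weak-$*$ convergent Galerkin subsequence in $X^W$, and uniqueness is immediate from linearity applied to the energy estimate for the difference of two solutions.

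Testing by $\psi_{tt}$ and integrating by parts in space, I obtain an identity whose principal terms are $\tfrac{\tau}{2}|\psi_{tt}(t)|^2$, $\int_0^t(\alpha\psi_{tt},\psi_{tt})_{L^2}\,ds$, $\tfrac{b}{2}|\nabla\psi_t(t)|^2$, and an indefinite cross contribution $c^2(\nabla\psi(t),\nabla\psi_t(t))_{L^2}-c^2\!\int_0^t|\nabla\psi_t|^2\,ds$, balanced by initial data and $\int_0^t(f,\psi_{tt})_{L^2}\,ds$. The cross term is split by Young's inequality combined with the identity $\nabla\psi(t)=\nabla\psi_0+\int_0^t\nabla\psi_t\,ds$, and a Gr\"onwall argument then yields $\sqrt{\tau}\psi_{tt}\in L^\infty L^2$, $\psi_{tt}\in L^2 L^2$, and $\nabla\psi,\nabla\psi_t\in L^\infty L^2$. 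The second level is obtained by the same procedure but with $-\Delta\psi_{tt}$ in place of $\psi_{tt}$, admissible in the Galerkin setting since each mode lies in $H_0^1(\Omega)\cap H^2(\Omega)$; this controls $\sqrt{\tau}\nabla\psi_{tt}\in L^\infty L^2$, $\nabla\psi_{tt}\in L^2 L^2$, and $\Delta\psi_t\in L^\infty L^2$, which upgrades to $\psi_t\in L^\infty H^2$ via \eqref{ellreg-PF}. The source contribution $\int_0^t(f,-\Delta\psi_{tt})_{L^2}\,ds$ is handled by integration by parts in time, yielding $-[(f,\Delta\psi_t)_{L^2}]_0^t+\int_0^t(f_t,\Delta\psi_t)_{L^2}\,ds$, matching precisely the hypothesis $f\in H^1(0,T;L^2(\Omega))$.

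The main obstacle, and the origin of the factor $\tau^{-1}\|\nabla\alpha\|_{L^\infty L^3}^2$ inside the exponential in $C(\alpha,T,\tau)$, is the commutator $\int_\Omega\psi_{tt}\nabla\alpha\cdot\nabla\psi_{tt}\,dx$ produced by $(\alpha\psi_{tt},-\Delta\psi_{tt})_{L^2}$ after integration by parts. I would estimate it by H\"older with exponents $(6,3,2)$ together with \eqref{embeddigs} and the Poincar\'e--Friedrichs inequality \eqref{ellreg-PF}:
\begin{equation*}
\Bigl|\int_\Omega\psi_{tt}\nabla\alpha\cdot\nabla\psi_{tt}\,dx\Bigr|\leq\|\nabla\alpha\|_{L^3}\|\psi_{tt}\|_{L^6}|\nabla\psi_{tt}|\leq C\,\|\nabla\alpha\|_{L^3}\,|\nabla\psi_{tt}|^2,
\end{equation*}
with $C$ a product of $C^\Omega_{H^1,L^6}$ and a Poincar\'e constant. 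Splitting via Young as $\tfrac{\ul{\alpha}}{2}|\nabla\psi_{tt}|^2+\tfrac{C^2}{2\ul{\alpha}}\|\nabla\alpha\|_{L^3}^2|\nabla\psi_{tt}|^2$, the first piece is absorbed into $\int_\Omega\alpha|\nabla\psi_{tt}|^2\,dx$; the second, after dividing the resulting differential inequality by $\tau/2$, becomes the coefficient of $|\nabla\psi_{tt}|^2$ in a Gr\"onwall argument and hence produces precisely the factor $\tau^{-1}\|\nabla\alpha\|_{L^\infty L^3}^2$ in the exponential. Under the smallness assumption \eqref{gradalphasmall}, however, the full commutator is absorbed into $\tfrac{\ul{\alpha}}{2}|\nabla\psi_{tt}|^2$, the Gr\"onwall coefficient becomes $\alpha$-independent, and the $\tau$-uniform form \eqref{CT} follows. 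Finally, the bound on $\tau^2\|\psi_{ttt}\|_{L^2 L^2}^2$ is read off directly from the equation via the triangle inequality $\tau|\psi_{ttt}|\leq|f|+\|\alpha\|_{L^\infty}|\psi_{tt}|+c^2|\Delta\psi|+b|\Delta\psi_t|$, using the bounds already in hand; the powers of $T$ in $C(\alpha,T,\tau)$ are collected from the repeated applications of Gr\"onwall and from the embeddings $L^\infty(0,T)\hookrightarrow L^2(0,T)$.
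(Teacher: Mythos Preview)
Your proposal is correct and follows essentially the same route as the paper: Galerkin approximation in the Dirichlet eigenbasis, test with $-\Delta\psi_{tt}$, handle the $(\alpha\psi_{tt},-\Delta\psi_{tt})$ commutator by H\"older $(6,3,2)$ with the embedding $H^1\hookrightarrow L^6$, integrate the $c^2\Delta\psi$ and $f$ contributions by parts in time, apply Gr\"onwall, and then read off the $\tau\psi_{ttt}$ bound directly from the equation. The only difference is that your preliminary test with $\psi_{tt}$ is not needed---the paper goes straight to $-\Delta\psi_{tt}$, since the resulting higher-order estimate already dominates the lower one via Poincar\'e--Friedrichs.
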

\begin{proof}
We carry out the proof by via Galerkin approximations in space, relying on energy estimates; cf. Refs.~\refcite{EvansBook,Roubicek}. Note that the initial data are meaningful since regularity \eqref{regularity} implies
\begin{equation*}
\begin{aligned}
&\psi \in C([0,T]; H_0^1(\Omega) \cap H^2(\Omega)), \\
&\psi_t \in C_w([0,T]; H_0^1(\Omega) \cap H^2(\Omega)), \\
&\psi_{tt} \in C_w([0,T]; H_0^1(\Omega)),
\end{aligned}
\end{equation*}
where $C_w$ denotes the space of weakly continuous functions; see Lemma 3.3 in Ref.~\refcite{Temam}. 
\\[3mm]
\noindent \textbf{Step 1: Discretization in space.} Let $\{w_i\}_{i \in \mathbb{N}}$ denote the eigenfunctions of the Dirichlet-Laplacian operator $- \Delta$. Then $\{w_i\}_{i \in \mathbb{N}}$ can be normalized to form an orthogonal basis of $H_0^1(\Omega) \cap H^2(\Omega)$ and to be orthonormal with respect to the $L^2(\Omega)$ scalar product.  \\
\indent Fix $n \in \mathbb{N}$ and denote $V_n=\text{span}\{w_1, \ldots, w_n\}$. We seek an approximate solution in the form of
\begin{equation}
\begin{aligned}
\psi^n=& \, \displaystyle \sum_{i=1}^n \xi_i(t)w_i(x),\\
\end{aligned}
\end{equation}
where $\xi_i:(0,T) \rightarrow \mathbb{R}$, $i \in [1,n]$. The initial data are chosen as
\begin{align*}
\psi^n_0(x)=\displaystyle \sum_{i=1}^n \xi_{i, 0}\, w_i(x),  \quad \psi^n_1(x)=\displaystyle \sum_{i=1}^n \xi_{i,1} \, w_i(x), \quad \psi^n_2(x)=\displaystyle \sum_{i=1}^n \xi_{i,2} \, w_i(x),
\end{align*}
where the coefficients $\xi_{i,0}$, $\xi_{i,1}$, $\xi_{i,2} \in \mathbb{R}$ are given by
\begin{equation*}
\begin{aligned}
\xi_{i,0} =(\psi_0,w_i)_{L^2}, \quad \xi_{i,1} =(\psi_1,w_i)_{L^2}, \quad \xi_{i,2} &=(\psi_2,w_i)_{L^2},
\end{aligned}
\end{equation*}
for $i \in [1,n]$.  In this way, we have by construction that
\begin{equation}  \label{GalerkinIC}
\begin{aligned}
\|\psi_0^n\|_{H^2} &\leq \|\psi_0\|_{H^2} &&\text{and} &&\psi_0^n \longrightarrow \psi_0  \text{ in } H_0^1\cap H^2, \\
\|\psi_1^n\|_{H^2} &\leq \|\psi_1\|_{H^2} &&\text{and} &&\psi_1^n \longrightarrow \psi_1 \text{ in } H_0^1 \cap H^2, \\
\|\psi_2^n\|_{H^1} &\leq \|\psi_2\|_{H^1} &&\text{and} &&\psi_2^n \longrightarrow \psi_1 \text{ in } H_0^1;
\end{aligned}
\end{equation}
see Lemma 7.5 in Ref.~\refcite{Robinson}. We then consider the following approximation of our problem
\begin{equation} \label{ibvp_semi-discrete}
\begin{aligned} 
\begin{cases}
(\tau \psi^n_{ttt}+\alpha \psi^n_{tt}-c^2 \Delta \psi^n-b \Delta \psi_t^n,  \phi)_{L^2} = (f, \phi)_{L^2}, \\[1mm]
\text{for every $\phi \in V_n$ pointwise a.e. in $(0,T)$}, \\[1mm]
(\psi^n(0), \psi_t^n(0), \psi^n_{tt}(0))=(\psi^n_0, \psi^n_1, \psi^n_2).
\end{cases}
\end{aligned}
\end{equation}
We introduce matrices $I^n=[I_{ij}]$, $M^n=[M_{ij}]$, $K^n=[K_{ij}]$, $C^n=[C_{ij}]$, and vector $F^n=[F_{i}]$, where
\begin{equation}
\begin{aligned}
& I^n_{ij}=(w_i, w_j)_{L^2}=  \delta_{ij}, \ M^n_{ij}(t)=(\alpha w_i, w_j)_{L^2}, \\
& K^n_{ij}=-c^2(\Delta w_i, w_j)_{L^2},\ D^n_{ij}=-b(\Delta w_i, w_j)_{L^2},\\
& F^n_i=(f, w_i)_{L^2}
\end{aligned}
\end{equation}
for $i,j \in [1,n]$, where $\delta_{ij}$ denotes the Kronecker delta. By setting $\xi^n=[\xi_{1} \ldots \xi_{n}]^T$, $\xi_0^n=[\xi_{1, 0} \ldots \xi_{n, 0}]^T$, $\xi_1^n=[\xi_{1, 1} \ldots \xi_{n,n}]^T$, and $\xi_2^n=[\xi_{1, 2} \ldots \xi_{n,2}]^T$,  problem \eqref{ibvp_semi-discrete} can be rewritten as
\begin{equation} \label{ODE_system}
\begin{aligned}
\begin{cases}
\tau I^n \xi^n_{ttt}+M^n \xi^n_{tt}+D^n \xi_t^n+K^n \xi^n=F^n(t), \\
(\xi^n(0), \xi^n_t(0), \xi^n_{tt}(0))=(\xi^n_0, \xi^n_1, \xi^n_2).
\end{cases}
\end{aligned}
\end{equation}
After additionally rewriting \eqref{ODE_system} as a first-order system, existence of an absolutely continuous solution $[\xi^n, \xi^n_t, \xi^n_{tt}]^{T}$ on $[0, T_n]$ for some $T_n \leq T$ follows from standard ODE theory; see, for example, Chapter 1 in Ref.~\refcite{Roubicek}. To see that $\xi^n \in H^3(0, T_n)$, we can employ a bootstrap argument,
\begin{equation}
\begin{aligned}
|\xi_{ttt}|^2_{L^2(0,T_n)}=& \tfrac{1}{\tau^2}|-M^n \xi^n_{tt}-D^n \xi_t^n-K^n \xi^n+F^n|^2_{L^2(0,T_n)} \\
\leq&\, C (\|\alpha\|^2_{L^\infty L^\infty}+\|f\|^2_{L^2L^2}).
\end{aligned}
\end{equation}
We, therefore, conclude that \eqref{ibvp_semi-discrete} has a solution $\psi^n \in H^3(0,T_n; V_n)$. The upcoming energy estimate will allow us to extend the existence interval to $[0,T]$.\\[3mm]
\noindent \textbf{Step 2: Energy estimates.} Our next goal is to obtain a bound for $\psi^n$ that is uniform with respect to $n$. To this end, we test our problem \eqref{ibvp_semi-discrete} with a suitable test function. \\[3mm]
\noindent \textbf{First estimate.} Testing the first equation in \eqref{ibvp_semi-discrete} with $\phi=-\Delta \psi^n_{tt}\in V_n$ and integrating over $(0,t)$, where $t \leq T_n$, yields the energy identity
\begin{equation}\label{enid1}
\begin{aligned}
& \tfrac{\tau}{2} |\nabla \psi^n_{tt}(t)|^2
+\tfrac{b}{2}|-\Delta \psi_t^n(t)|^2+\|\sqrt{\alpha}\nabla \psi^n_{tt}\|^2_{L_t^2L^2} \\
=&\, \tfrac{\tau}{2} |\nabla \psi^n_{tt}(0)|^2 
+\tfrac{b}{2}|-\Delta \psi_t^n(0)|^2-\int_0^t(\psi^n_{tt}\nabla\alpha,\nabla \psi^n_{tt})_{L^2} \textup{d}s\\
&-c^2 \left(-\Delta \psi^n,-\Delta \psi^n_{t}\right)_{L^2}\, \Bigr\vert_0^t+c^2\int_0^t\left(-\Delta \psi^n_{t},-\Delta \psi^n_{t}\right)_{L^2}\, \textup{d}s\\
&+\left(f, -\Delta \psi_t^n \right)_{L^2}\,\Bigr \vert_0^t-\int_0^t \left(f_t,-\Delta \psi_t^n \right)_{L^2}\, \textup{d}s 
=: \, \textbf{rhs}_1(t),
\end{aligned}
\end{equation}
where we have skipped the argument $(s)$ under the time integral for notational simplicity
and used the abbreviation $L_t^2L^2$ for $L^2(0,t;L^2(\Omega))$. 
To derive \eqref{enid1}, we have used the following three identities:
\[
\begin{aligned}
(\alpha \psi^n_{tt},-\Delta \psi^n_{tt})_{L^2}=(\alpha \nabla \psi^n_{tt},  \nabla \psi^n_{tt})_{L^2}+(\psi^n_{tt}\nabla\alpha,\nabla \psi^n_{tt}),
\end{aligned}
\]
and
\[
\begin{aligned}
&c^2\int_0^t\left(-\Delta \psi^n,-\Delta \psi^n_{tt}\right)_{L^2}\, \textup{d}s\\
=& \, c^2 \left(-\Delta \psi^n,-\Delta \psi^n_{t}\right)_{L^2}\, \Bigr\vert_0^t-c^2\int_0^t\left(-\Delta \psi^n_{t},-\Delta \psi^n_{t}\right)_{L^2}\, \textup{d}s,
\end{aligned}
\]
as well as 
\begin{equation*} 
\begin{aligned}
\int_0^t \left(f,-\Delta \psi^n_{tt} \right)_{L^2}\, \textup{d}s
=\left(f, -\Delta \psi_{t}^n \right)_{L^2}\,\Bigr \vert_0^t-\int_0^t \left(f_t,-\Delta \psi_t^n \right)_{L^2}\, \textup{d}s.
\end{aligned}
\end{equation*}
We note that $f \in H^1(0,T; L^2(\Omega)) \hookrightarrow C([0,T]; L^2(\Omega))$. We next estimate $\textup{rhs}_1(t)$ from above. We introduce here a constant that depends on the initial data to simplify the notation:
\[
\begin{aligned}
& C_1(\psi_0, \psi_1,\psi_2;\tau)\\\smallskip
=&\, \begin{multlined}[t]\tfrac{\tau}{2} |\nabla \psi^n_{tt}(0)|_{L^2}^2 
+\tfrac{b}{2}|-\Delta \psi_t^n(0)|_{L^2}^2+|f(0)|_{L^2}|-\Delta \psi_t^n(0)|_{L^2}\\[1mm]
+c^2 |-\Delta \psi^n(0)|_{L^2} |-\Delta \psi_t^n(0)|_{L^2} \end{multlined}\\
=&\, \tfrac{\tau}{2} |\nabla \psi^n_2(0)|_{L^2}^2 
+\tfrac{b}{2}|-\Delta \psi_1^n|_{L^2}^2+|f(0)|_{L^2}|-\Delta \psi_1^n|_{L^2}+c^2 |-\Delta \psi^n_0|_{L^2} |-\Delta \psi_1^n|_{L^2}.
\end{aligned}
\]
By applying H\"older's inequality, we get
\begin{equation*}
\begin{aligned}
\textbf{rhs}_1(t)
\leq& \, C_1(\psi_0, \psi_1,\psi_2;\tau)+\|\nabla \alpha\|_{L^\infty L^3}\|\psi_{tt}^n\|_{L^2 L^6}\|\nabla \psi_{tt}^n\|_{L_t^2L^2} \\
&+ c^2 |-\Delta \psi^n (t)|_{L^2} |-\Delta \psi_t^n(t)|_{L^2}+c^2 \|-\Delta \psi^n_t\|_{L_t^2L^2} \\
&+\|f\|_{L^\infty L^2}|-\Delta \psi_t^n(t)|_{L^2}+\|f_t\|_{L_t^2L^2}\|-\Delta \psi_t^n\|_{L_t^2L^2}.
\end{aligned}
\end{equation*}
We further estimate the right-hand side with the help of Young's $\varepsilon$-inequality \begin{align} \label{Young}
xy\leq \tfrac{\varepsilon}{2}x^2+\tfrac{1}{2\varepsilon}y^2,
\end{align}
and choosing $\varepsilon=\frac{b}{4}$ or $\varepsilon=1$, and the embedding results to obtain for a.e. $t \in [0, T_n]$,
\begin{equation}\label{estrhs1}
\begin{aligned}
\textbf{rhs}_1(t)
\leq& \, C_1(\psi_0, \psi_1,\psi_2;\tau) +C^{\Omega}_{H^1, L^6}\|\nabla \alpha \|_{L^\infty L^3}\|\nabla \psi_{tt}^n\|^2_{L_t^2L^2} \\
&+\tfrac{2c^4}{b} |-\Delta \psi^n(t)|^2_{L^2}+\tfrac{b}{8}|-\Delta \psi_t^n (t)|^2_{L^2}+c^2 \|-\Delta \psi^n_t\|_{L_t^2L^2} \\
&+\tfrac{2}{b} \|f\|^2_{L^\infty L^2}+\tfrac{b}{8}|-\Delta \psi_t^n(t) |^2_{L^2}
+\frac12 \|f_t\|^2_{L^2L^2}+\frac12 \|-\Delta \psi_t^n\|^2_{L_t^2L^2}.
\end{aligned}
\end{equation}
We can estimate the term $\|-\Delta \psi^n(t)\|_{L^2}$ appearing on the right-hand side as follows
\begin{align} \label{est_Delta_psi} 
\|-\Delta \psi^n\|_{L_t^\infty L^2} \leq \sqrt{t} \|-\Delta \psi_t^n\|_{L_t^2L^2}+|-\Delta \psi_0|_{L^2}.
\end{align}
Altogether, we get
\begin{equation}\label{est_1}
\begin{aligned}
& \tfrac{\tau}{2} |\nabla \psi^n_{tt}(t)|_{L^2}^2+\underline{\alpha}\|\nabla \psi^n_{tt}\|^2_{L_t^2L^2}
+\tfrac{b}{4}|-\Delta \psi^n_t(t)|_{L^2}^2\\\smallskip
\leq& \,\begin{multlined}[t] C_1(\psi_0, \psi_1,\psi_2;\tau)
+ C^\Omega_{H^1, L^6} \|\nabla \alpha\|_{L^\infty L^3}\|\nabla \psi_{tt}^n\|^2_{L_t^2L^2} \\
+ \tfrac{2c^4}{b} T \|-\Delta \psi_t^n\|^2_{L_t^2L^2}+ \tfrac{2c^4}{b}|-\Delta \psi_0|^2_{L^2}+c^2 \|-\Delta \psi^n_t\|_{L_t^2L^2} \\
+\tfrac{2}{b}\|f\|^2_{L^\infty L^2}+\tfrac12\|f_t\|^2_{L^2L^2}+\tfrac12\|-\Delta \psi_t^n\|^2_{L_t^2L^2}. \end{multlined}
\end{aligned}
\end{equation}
If the smallness assumption \eqref{gradalphasmall} holds, then the term containing $\|\nabla \alpha\|_{L^\infty L^3}$ can be absorbed into the left-hand side.\\[3mm]
\noindent \textbf{A priori bound for \mathversion{bold}$\psi^n$.} Applying Gronwall's inequality to \eqref{est_1}, and taking the supremum over $t\in(0,T_n)$ then yields
\begin{equation}
\begin{aligned}
&  \tau \|\nabla \psi^n_{tt}\|^2_{L^\infty L^2}+\|\nabla \psi^n_{tt}\|^2_{L^2L^2}+\|- \Delta \psi_t^n\|^2_{L^\infty L^2}\\[1mm]
\leq&\,\tilde{C}(\alpha, T_n, \tau)\left(|\psi^n_0|^2_{H^2}+|\psi^n_1|^2_{H^2}+\tau|\psi^n_2|^2_{H^1}+ \| f\|^2_{L^\infty L^2}+\|f_t\|^2_{L^2L^2}\right). 
\end{aligned}
\end{equation}
By employing the upper bounds for the approximate initial data stated in \eqref{GalerkinIC} and the inequality $T_n \leq T$, we further have
\begin{equation} \label{energy_est_discrete}
\begin{aligned}
&  \tau \|\nabla \psi^n_{tt}\|^2_{L^\infty L^2}+\|\nabla \psi^n_{tt}\|^2_{L^2L^2}+\|- \Delta \psi_t^n\|^2_{L^\infty L^2}\\[1mm]
\leq&\,\tilde{C}(\alpha, T_n, \tau)\left(|\psi_0|^2_{H^2}+|\psi_1|^2_{H^2}+\tau|\psi_2|^2_{H^1}+ \| f\|^2_{L^\infty L^2}+\|f_t\|^2_{L_t^2L^2}\right). 
\end{aligned}
\end{equation}
The constant above is given by
\begin{equation*}
\begin{aligned}
\tilde{C}(\alpha, T, \tau)= \tilde{C}_1 \, \textup{exp}\,\left(\tilde{C}_2 \, \left(\tfrac{1}{\tau}\|\nabla \alpha\|^2_{L^\infty L^3} +1+T \right) T \right),
\end{aligned}
\end{equation*}
or, if assumption \eqref{gradalphasmall} holds, by 
\begin{equation*}
\begin{aligned}
\tilde{C}(\alpha, T, \tau)= \tilde{C}(T)=\tilde{C}_1 \, \textup{exp}\,\left(\tilde{C}_2 \, \left(1+T \right) T \right),
\end{aligned}
\end{equation*}
where $\tilde{C}_1$, $\tilde{C}_2>0$ do not depend on $n$ or $\tau$. 
Since the right-hand side of \eqref{energy_est_discrete} does not depend on $T_n$, we can extend the existence interval to $[0,T]$, i.e. $T_n=T$.\\[3mm]
\noindent \textbf{Second estimate.} By testing \eqref{ibvp_semi-discrete} with $\phi=\tau \psi^n_{ttt} \in V_n$ and integrating over $(0,T)$, we obtain
\begin{equation}
\begin{aligned}
\tau^2\|\psi^n_{ttt}\|^2_{L^2L^2} 
\leq \|-\alpha \psi^n_{tt}+c^2 \Delta \psi^n+b \Delta \psi_t^n+f\|_{L^2L^2}\|\tau \psi^n_{ttt}\|_{L^2L^2},
\end{aligned}
\end{equation}
from which we have
\begin{equation} \label{bound_psi_ttt}
\begin{aligned}
& \tau\|\psi^n_{ttt}\|_{L^2L^2} \\
\leq& \, \|\alpha\|_{L^\infty L^\infty}\|\psi^n_{tt}\|_{L^2L^2}+c^2\|-\Delta \psi^n\|^2_{L^2 L^2}
+b\|-\Delta \psi_t^n\|_{L^2L^2}+\|f\|_{L^2L^2}.
\end{aligned}
\end{equation}
The terms $\|-\Delta \psi^n\|^2_{L^2L^2}$, $\|-\Delta \psi^n_t\|^2_{L^2L^2}$ can be further estimated similarly to \eqref{est_Delta_psi}, 
\begin{align*}
\|-\Delta \psi^n\|_{L^2 L^2} 
= & 
\left(\int_0^T \left| -\Delta \psi^n_0 +\int_0^t  -\Delta \psi^n_t(s)\, ds\right|_{L^2}^2\,\,dt\right)^{1/2}\\
\leq & 
\, \sqrt{T}|-\Delta \psi^n_0|_{L^2(\Omega)}+
\left(\int_0^T \left|\int_0^t  -\Delta \psi^n_t(s)\, ds\right|_{L^2}^2\,\,dt\right)^{1/2}
\\
\leq & 
\, \sqrt{T}|-\Delta \psi^n_0|_{L^2(\Omega)}+
\left(\int_0^T t^2 \, dt \right)^{1/2} \|-\Delta \psi^n_t\|_{L^\infty L^2}\\
\leq& \, \sqrt{T}|-\Delta \psi^n_0|_{L^2(\Omega)}+\sqrt{\tfrac{T^3}{3}}\|-\Delta \psi^n_t\|_{L^\infty L^2}, \\[1ex]
\|-\Delta \psi^n_t\|_{L^2 L^2} 
\leq& \, \sqrt{T}\|-\Delta \psi^n_t\|_{L^\infty L^2}.
\end{align*}
The term $\|\psi^n_{tt}\|_{L^2L^2}$ by means of the Poincar\'{e}-Friedrichs inequality, so that by using \eqref{energy_est_discrete} we obtain energy estimate \eqref{energy_est_lin} with $\psi^n$ in place of $\psi$.
\\[3mm] 
\noindent \textbf{Step 3: Passing to the limit.} On account of estimate \eqref{energy_est_discrete} and standard compactness results, together with the fact that the spatial and temporal domains $(0,T)$ and $\Omega$ are bounded, we know that there exist a subsequence, denoted again by $\{\psi^n\}_{n \in \mathbb{N}}$, and a function $\psi$ such that
\begin{equation} \label{weak_limits}
\begin{alignedat}{4} 
\psi_{ttt}^n  &\relbar\joinrel\rightharpoonup \psi_{ttt} &&\text{ weakly}  &&\text{ in } &&L^2(0,T; L^2(\Omega)),  \\
\psi_{tt}^n  &\relbar\joinrel\rightharpoonup \psi_{tt} &&\text{ weakly-$\star$}  &&\text{ in } &&L^\infty(0,T;H_0^1(\Omega)),  \\
\psi_t^n &\relbar\joinrel\rightharpoonup \psi_t &&\text{ weakly-$\star$} &&\text{ in } &&L^\infty(0,T; H_0^1(\Omega)\cap H^2(\Omega)),\\
\psi^n &\relbar\joinrel\rightharpoonup \psi &&\text{ weakly-$\star$} &&\text{ in } &&L^\infty(0,T; H_0^1(\Omega)\cap H^2(\Omega)).
\end{alignedat} 
\end{equation}
Our next task is to prove that $\psi$ solves \eqref{ibvp_linear}. We test \eqref{ibvp_semi-discrete} with $\eta \in C_c^\infty(0,T)$ and integrate over time to obtain 
\begin{equation} \label{1}
\begin{aligned}
&-\int_{0}^T(\tau \psi^n_{ttt}, w_i)_{L^2}\, \eta(t)\, \textup{d}t\\
=&\, -\int_{0}^T(\alpha \psi^n_{tt}-c^2\Delta \psi^n- b\Delta \psi_t^n -f, w_i)_{L^2} \eta(t) \, \textup{d}t,
\end{aligned}
\end{equation}
for all $i \in [1, n]$. Thanks to \eqref{weak_limits}, letting $n \rightarrow \infty$ in \eqref{1} leads to
\begin{equation*} 
\begin{aligned}
-\int_{0}^T(\tau \psi_{ttt}, w_i)_{L^2} \eta^{\prime}(t)\, \textup{d}t
=\, -\int_{0}^T(\gamma \psi_{tt}-c^2\Delta \psi-b\Delta \psi_{t} -f, w_i)_{L^2} \eta(t) \, \textup{d}t,
\end{aligned}
\end{equation*}
for all  $i \in \mathbb{N}$ and $\eta \in C^\infty(0,T)$. By construction, $\cup_{n \in \mathbb{N}}V_n$ is dense in $L^2(\Omega)$, so $\psi$ solves the PDE in \eqref{ibvp_linear} in the $L^2(0,T; L^2(\Omega))$ sense. Due to the embeddings
\begin{align*}
&	\psi^n \in W^{1,\infty}(0,T; H_0^1(\Omega) \cap H^2(\Omega)) \hookrightarrow \hookrightarrow	C([0,T]; H_0^1(\Omega) \cap H^2(\Omega)),\\
& \psi_t^n \in W^{1, \infty}(0,T; H_0^1(\Omega)) \hookrightarrow \hookrightarrow C([0,T]; H_0^1(\Omega)), \\
& \psi_{tt}^n \in H^1(0,T; L^2(\Omega)) \hookrightarrow C([0,T]; L^2(\Omega)), 
\end{align*}
we know that
\begin{align*}
&\psi^n(0) \rightarrow \psi(0) \quad \text{in } H_0^1(\Omega) \cap H^2(\Omega),\\
& \psi_t^n(0) \rightarrow \psi_t(0) \quad \text{in } H_0^1(\Omega),\\
& \psi_{tt}^n(0) \rightarrow \psi_{tt}(0) \quad \text{in } L^2(\Omega).
\end{align*}
Thanks to \eqref{GalerkinIC}, we can then infer that $\psi(0)=\psi_0$, $\psi_t(0)=\psi_1$, and $\psi_{tt}(0)=\psi_2$.  Altogether, we conclude that $\psi$ is a solution of the initial-boundary value problem \eqref{ibvp_linear}. \\[3mm]
\noindent \textbf{Step 4: Energy inequality for \mathversion{bold}$\psi$.} We can take the limit inferior as $n \rightarrow \infty$ of \eqref{energy_est_discrete}, \eqref{bound_psi_ttt}, and via the weak and the weak-$\star$ lower semi-continuity of norms obtain the final estimate \eqref{energy_est_lin}. Uniqueness of a solution follows by the linearity of the equation, together with the fact that the homogeneous equation only has the zero solution, by the above energy estimates.
\end{proof}

\section{Well-posedness of the nonlinear Westervelt-type wave equation for $\tau>0$}\label{sec:wellposed_Wes}

After having studied the linearized equation, we now proceed to the nonlinear model \eqref{WesterveltMC}. For proving well-posedness of \eqref{WesterveltMC}, we introduce the fixed-point operator $\mathcal{T}$ that maps $\phi$ to a solution $\psi$ of 
\begin{equation}\label{PDEmathcalT}
\tau\psi_{ttt}+(1-k\phi_t)\psi_{tt}-c^2\Delta \psi - b\Delta \psi_t
= 0\,,
\end{equation}
on some ball 
\begin{equation}\label{Brho_W}
\begin{aligned}
B_\rho^{X^W}=\{\psi\in X^W \, :& \ \psi(0)=\psi_0\,, \ \psi_t(0)=\psi_1\,, \ \psi_{tt}(0)=\psi_2\,, \\ 
&\|\psi\|_{X^W}^2:=\tau^2 \|\psi_{ttt}\|^2_{L^2L^2} + \tau \|\psi_{tt}\|^2_{L^\infty H^1}\\
&\hspace{1.8cm} +\|\psi_{tt}\|^2_{L^2 H^1}+\|\psi\|^2_{W^{1,\infty} H^2}
\leq \rho^2\}
\end{aligned}
\end{equation}
in the space $X^W$, defined in \eqref{XWXKspaces}. Note that the operator is well-defined on account of Theorem~\ref{th:wellposedness_lin}. \\
\indent For establishing $\mathcal{T}$ as a self-mapping on $B_\rho^{X^W}$, it is crucial to prove that $\alpha=1-k\phi_t$ is in 
$L^\infty(0,T; L^\infty(\Omega))\cap L^\infty(0,T; W^{1,3}(\Omega))$
and that the smallness condition \eqref{gradalphasmall} of Theorem~\ref{th:wellposedness_lin} holds, provided $\phi\in B_\rho^X$. Smallness of $\phi$ will also be required for verifying the non-degeneracy condition $\alpha(t)\geq\ul{\alpha}>0$.\\
\indent Note that the radius of the neighborhood in which the self-mapping property holds will be independent of $\tau$. In particular, it holds for arbitrarily small $\tau$ and therefore allows for taking limits as $\tau\to0$ later on. \\
\indent Contractivity of $\mathcal{T}$, based on the fact that 
$\hat{\psi}=\psi_1-\psi_2=\mathcal{T}(\phi_1)-\mathcal{T}(\phi_2)$ solves
\begin{equation}\label{hat_Wes}
\begin{aligned}
&\tau\hat{\psi}_{ttt}+(1-k\phi_{1\,t})\hat{\psi}_{tt}-c^2\Delta \hat{\psi} - b\Delta \hat{\psi}_t = k\hat{\phi}_t \psi_{2\,tt}\,,
\end{aligned}
\end{equation}
with homogeneous initial and boundary conditions (where $\hat{\phi}=\phi_1-\phi_1$),
would require to prove that $\alpha_1=1-k\phi_{1\,t}$ and $f_2=k\hat{\phi}_t \psi_{2\,tt}$ are in 
$L^\infty(0,T; L^\infty(\Omega))\cap L^\infty(0,T; W^{1,3}(\Omega))$
and $H^1(0,T;L^2(\Omega))$, respectively. This regularity, however, would only be possible in an $O(\sqrt{\tau})$ neighborhood because of the $\phi_{2ttt}$ term arising in $f_{2t}$.
Therefore, we do not prove contractivity, but base our existence proof on Schauder's theorem, similarly to the approach in Ref.~\refcite{KT18_ModelsNlAcoustics}.
\begin{theorem} \label{th:wellposedness_Wes}
	Let $c^2$, $b>0$, $k\in\mathbb{R}$ and let $T>0$. Then there exist $\rho>0$ and $\rho_0>0$ such that for all $(\psi_0,\psi_1,\psi_2)\in X^W_0
=H_0^1(\Omega) \cap H^2(\Omega)\times H_0^1(\Omega) \cap H^2(\Omega)\times H_0^1(\Omega)
$ satisfying 
\begin{equation}\label{smallness_init_Wes}
\|\psi_0\|_{H^2(\Omega)}^2+\|\psi_1\|_{H^2(\Omega)}^2+\tau\|\psi_2\|_{H^1(\Omega)}^2\leq\rho_0^2\,,
\end{equation} 
there exists a solution $\psi\in X^W$ of 
	\begin{equation} \label{ibvp_Westervelt_MC}
	\begin{aligned}
	\begin{cases}
	\tau\psi_{ttt}+\psi_{tt}-c^2\Delta \psi - b\Delta \psi_t = \left(
\tfrac{k}{2}
(\psi_t)^2\right)_t\,
 \quad \mbox{ in }\Omega\times(0,T), \\[1mm]
	\psi=0 \quad \mbox{ on } \partial \Omega\times(0,T),\\[1mm]
	(\psi, \psi_t, \psi_{tt})=(\psi_0, \psi_1, \psi_2) \quad \mbox{ in }\Omega\times \{0\},
	\end{cases}
	\end{aligned}
	\end{equation}
\end{theorem}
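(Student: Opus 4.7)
The plan is to apply Schauder's fixed point theorem to the operator $\mathcal{T}:B_\rho^{X^W}\to X^W$ defined via \eqref{PDEmathcalT}. By Theorem~\ref{th:wellposedness_lin}, $\mathcal{T}$ is well-defined once one verifies, for $\phi\in B_\rho^{X^W}$ and $\alpha=1-k\phi_t$, the non-degeneracy \eqref{non-degeneracy_assumption}, the regularity \eqref{eq:alphagammaf_reg_Wes}, and the smallness condition \eqref{gradalphasmall}. Since $\phi_t\in L^\infty(0,T;H^2(\Omega))$ and the embeddings $H^2\hookrightarrow L^\infty$ and $H^1\hookrightarrow L^6\hookrightarrow L^3$ hold, both $\|\phi_t\|_{L^\infty L^\infty}$ and $\|\nabla\phi_t\|_{L^\infty L^3}$ are controlled by a multiple of $\rho$. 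Choosing $\rho$ sufficiently small, depending only on $k$ and the embedding constants (in particular \emph{independently of} $\tau$), secures $\alpha\geq\underline{\alpha}:=1/2$ and \eqref{gradalphasmall}, so the $\tau$-independent bound \eqref{CT} applies and Theorem~\ref{th:wellposedness_lin} yields
\begin{equation*}
\|\mathcal{T}(\phi)\|_{X^W}^2\leq C(\alpha,T)\bigl(\|\psi_0\|_{H^2}^2+\|\psi_1\|_{H^2}^2+\tau\|\psi_2\|_{H^1}^2\bigr)\leq C(\alpha,T)\rho_0^2.
\end{equation*}
Picking $\rho_0$ so that $C(\alpha,T)\rho_0^2\leq\rho^2$ then yields the self-mapping property $\mathcal{T}(B_\rho^{X^W})\subseteq B_\rho^{X^W}$, with $\rho, \rho_0$ uniform in $\tau$.

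The main obstacle is continuity of $\mathcal{T}$ in a topology for which $B_\rho^{X^W}$ is compact, since the $X^W$-contractivity of $\mathcal{T}$ is only available in an $O(\sqrt{\tau})$ neighborhood, as the authors indicate. The plan is to equip $B_\rho^{X^W}$ with the weak-$\star$ topology of $X^W$, in which it is convex, bounded, and sequentially compact by Banach--Alaoglu (weak-$\star$ closedness under the initial-trace constraints in \eqref{Brho_W} is elementary since those are linear and continuous; separability of the predual makes bounded sets sequentially metrizable). To establish weak-$\star$ sequential continuity of $\mathcal{T}$, suppose $\phi_n\rightharpoonup^\star\phi$ in $X^W$. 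The Aubin--Lions lemma applied to $\phi_t\in L^\infty(0,T;H^2(\Omega))\cap W^{1,\infty}(0,T;H^1(\Omega))$ gives strong convergence $\phi_{n,t}\to\phi_t$ in $C([0,T];H^{2-\varepsilon}(\Omega))$, hence in $L^\infty(\Omega\times(0,T))$ for $d\leq 3$. The images $\psi_n=\mathcal{T}(\phi_n)$ are uniformly bounded in $X^W$ by the self-mapping estimate, so along a subsequence $\psi_n\rightharpoonup^\star\tilde\psi\in X^W$. Passing to the limit in \eqref{PDEmathcalT} is then possible term by term, the only delicate factor being $k\phi_{n,t}\psi_{n,tt}$, which converges weakly in $L^2(0,T;L^2(\Omega))$ by the strong/weak pairing of the two sequences. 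The uniqueness part of Theorem~\ref{th:wellposedness_lin} identifies $\tilde\psi=\mathcal{T}(\phi)$, and the Urysohn subsequence principle upgrades this to convergence of the full sequence.

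Schauder's fixed point theorem then yields a $\psi\in B_\rho^{X^W}$ with $\psi=\mathcal{T}(\psi)$, which solves \eqref{ibvp_Westervelt_MC} upon invoking the elementary identity $k\psi_t\psi_{tt}=(\tfrac{k}{2}\psi_t^2)_t$. The initial and boundary conditions are built into the definition of $B_\rho^{X^W}$ and thus automatic. The regularity $\psi\in X^W$ is inherited from the fixed-point relation via Theorem~\ref{th:wellposedness_lin}.
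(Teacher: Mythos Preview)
Your proposal is correct and follows essentially the same route as the paper: Schauder's fixed-point theorem applied to $\mathcal{T}$ on $B_\rho^{X^W}$ with the weak-$\star$ topology, self-mapping via Theorem~\ref{th:wellposedness_lin} with the $\tau$-independent bound \eqref{CT}, compactness from Banach--Alaoglu, and sequential weak-$\star$ continuity established by combining strong convergence of $\phi_{n,t}$ (via compact embedding/Aubin--Lions) with weak convergence of $\psi_{n,tt}$ and the uniqueness part of Theorem~\ref{th:wellposedness_lin}. The only cosmetic difference is that the paper invokes Fan's version of Schauder's theorem for locally convex spaces, whereas you rely on metrizability of bounded sets via separability of the predual; both justifications are valid.
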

\noindent such that it holds
\begin{align} \label{est_Westervelt}
\tau^2 \|\psi_{ttt}\|^2_{L^2L^2} + \tau \|\psi_{tt}\|^2_{L^\infty H^1}+\|\psi_{tt}\|^2_{L^2 H^1}+\|\psi\|^2_{W^{1,\infty} H^2}
\leq \rho^2\,.
\end{align}
\begin{proof}
Our proof relies on Schauder's fixed-point theorem applied to the operator $$\mathcal{T}: B_\rho^{X^W}\ni \phi \mapsto \psi,$$
where $\psi$ solves \eqref{PDEmathcalT}. To obtain the self-mapping property of $\mathcal{T}$ , we have to verify the condition \eqref{gradalphasmall} of Theorem~\ref{th:wellposedness_lin} as well as $\alpha(t)\geq\ul{\alpha}>0$. We thus estimate the 
$L^\infty(0,T; L^\infty(\Omega))\cap L^\infty(0,T; W^{1,3}(\Omega))$
norm of $\alpha=1-k\phi_t$. In view of the bounds
\[
\begin{aligned} 
\|\nabla\alpha\|_{L^\infty L^3}
&=|k|\,\|\nabla \phi_t\|_{L^\infty L^3}
\leq |k|\, C_{H^2, W^{1,3}}^\Omega \|\phi\|_{W^{1,\infty} H^2}
\leq |k|\, C_{H^2, W^{1,3}}^\Omega \rho,\\
\|\alpha-1\|_{L^\infty L^\infty}
&=|k|\,\|\phi_t\|_{L^\infty L^\infty}
\leq |k|\, C_{H^2, L^\infty}^\Omega \|\phi\|_{W^{1,\infty} H^2}
\leq |k|\, C_{H^2, L^\infty}^\Omega \rho,
\end{aligned}
\]
the smallness condition \eqref{gradalphasmall} and the non-degeneracy condition $\alpha(t)\geq\ul{\alpha}>0$ can be satisfied by choosing 
\[
\rho<\left(2|k|\, \max \, \left\{C_{H^2, L^\infty}^\Omega\, , \ C_{H^1, L^6}^\Omega C_{H^2, W^{1,3}}^\Omega \right \}\right)^{-1}. 
\] 
The self-mapping property follows from the estimate \eqref{energy_est_lin}, with $f=0$ and
\[
\rho_0^2\leq \left(C_1 \left(\tfrac{13}{4}+T^3\right) \textup{exp}\,\left(C_2 \, \left(1+T\right) T\right)\right)^{-1}\, \rho^2.
\] 
The set $B_\rho^{X^W}$ is a weak* compact and convex subset of the Banach space $X^W$, defined in \eqref{XWXKspaces}. The weak* continuity of $\mathcal{T}$ can be established as follows: For any sequence $(\phi_n)_{n\in\mathbb{N}}\subseteq B_\rho^{X^W}$ that weakly* converges to $\phi\in B_\rho^{X^W}$ in $X^W$, we also have $$(\mathcal{T}(\phi_n))_{n\in\mathbb{N}}\subseteq B_\rho^{X^W}.$$ Thus, by compactness of the embedding $X^W\to W^{1,\infty}(0,T;L^\infty(\Omega))$, there exists a subsequence $(\phi_{n_\ell})_{\ell\in\mathbb{N}}$ such that $1-k\phi_{n_\ell\, t}$ converges to $1-k\phi_t$ strongly in $L^\infty(0,T;L^\infty(\Omega))$ and $\mathcal{T}(\phi_{n_\ell})$ converges weakly* in $X^W$ to some $\psi\in B_\rho^{X^W}$, which by definition of $B_\rho^{X^W}$ satisfies the initial and homogeneous Dirichlet boundary conditions. It is readily checked that $\psi$ also solves the PDE \eqref{PDEmathcalT}, which, by uniqueness in Theorem \ref{th:wellposedness_lin}, implies $\psi=\mathcal{T}(\phi)$. A subsequence-subsequence argument yields weak* convergence in $X^W$ of the whole sequence $(\mathcal{T}(\phi_n))_{n\in\mathbb{N}}$ to $\mathcal{T}(\phi)$. \\
\indent We can therefore conclude existence of a fixed point of $\mathcal{T}$ in $B_\rho^{X^W}$ from the general version of Schauder's fixed-point theorem in locally convex topological spaces; see Ref~\refcite{Fan1952}, which we here quote for the convenience of the reader
\begin{quote}
Let $L$ be a locally convex topological linear space and $K$ a compact convex set in $L$. Let $M(K)$ be the family of all closed convex (non-
empty) subsets of $K$. Then for any upper semicontinuous point-to-set transformation $f$ from $K$ into $M (K)$, there exists a point $x_0\in K$ such that $x_0\in f(x_0)$.
\end{quote}
We use this theorem with the single valued point-to-set relation (i.e., mapping) $f=\mathcal{T}$, the weak*topology on $X^W$, and $K=B_\rho^{X^W}$. 
\end{proof}



\section{Higher energy estimates} \label{sec:enest_higher}
Due to the appearance of $\|f_t\|_{L^2L^2}$ on the right-hand side of estimate \eqref{energy_est_lin} in Theorem~\ref{th:wellposedness_lin}, we cannot rely only on this estimate
for the Kuznetsov-type JMGT equation \eqref{KuznetsovMC} since $$f_t=2\nabla\phi\cdot\nabla\phi_{tt}+2|\nabla\phi_t|^2.$$ 
Existence of solutions can still be based on Theorem \ref{th:wellposedness_lin}, (case $\|\nabla\alpha\|_{L^\infty L^3} <\ul{\alpha}/C^\Omega_{H^1, L^6}$ with a $\tau$-independent bound on the energy) because $f$ is still in the right space. However, $$\|f_t\|_{L^2 L^2}=2\|\nabla \psi\cdot\nabla\psi_{tt} + |\nabla \psi_t|^2\|_{L^2 L^2}$$ can only be shown to be bounded by $\frac{1}{\sqrt{\tau}}$, so it might be large as $\tau\to0$. This does not matter for proving existence according to Theorem \ref{th:wellposedness_lin}, but excludes a fixed-point argument for proving well-posedness of the nonlinear equation \eqref{KuznetsovMC} in this setting.\\
\indent To be able to take limits as $\tau\to0$, we thus need higher order energy estimates.  In particular, we need to derive $\tau$-independent bounds on $\|\psi\|_{L^\infty H^3}$ which will enable us to estimate $f=2\nabla \psi\cdot\nabla\psi_t$ in the required norms. We replace estimate \eqref{energy_est_lin}  by an estimate on the auxiliary function $z$ and complement this with a higher order in space estimate on $\psi$.
   
In order to carry out these new error estimates, we now turn our attention to studying the equation \eqref{WesterveltMC_lin}, restated again here for convenience:
\begin{equation} \tag{\ref{WesterveltMC_lin}}
\begin{aligned}
\tau\psi_{ttt}+\alpha(x,t)\psi_{tt}-c^2\Delta \psi - b\Delta \psi_t = f \mbox{ in }\Omega\times(0,T)\,, 
\end{aligned}
\end{equation}
together with its equivalent reformulation \eqref{dampedwave_z} using \eqref{eq:z}, \eqref{eq:gamma}, i.e., 
\begin{equation} \tag{\ref{dampedwave_z}}
\begin{aligned}
\tau z_{tt} +\gamma z_t - b \Delta z-\gamma\tfrac{c^2}{b} z +\gamma\tfrac{c^4}{b^2} \psi=f \mbox{ in }\Omega\times(0,T)\,,
\end{aligned}
\end{equation}
where we recall that the auxiliary state is given by
\begin{equation} \tag{\ref{eq:z}}
z=\psi_t+\tfrac{c^2}{b}\psi.
\end{equation}
We assume that for some $\ul{\alpha}>0$, $\ul{\gamma}>0$
\begin{equation}\label{eq:alphagamma_pos}
\alpha(t)\geq\ul{\alpha},\,\quad \gamma(t)=\alpha(t)-\tau\tfrac{c^2}{b}\geq\ul{\gamma} \quad \mbox{ on }\Omega \mbox{ for a.e. }t\in(0,T).
\end{equation}
\begin{theorem} \label{Thm:LinearHigher}
	Let $c^2$, $b$, and let $T>0$. Assume that
	\begin{itemize}
		\item $\alpha \in 
W^{1,1}(0,T;H^1(\Omega))\cap L^\infty(0,T;W^{1,3}(\Omega)\cap L^1(0,T;H^2(\Omega)$, \medskip
		\item $f\in H^1(0,T; L^2(\Omega))\cap L^2(0,T;H^1(\Omega))$, \medskip
		\item $(\psi_0, \psi_1, \psi_2)\in X^K_0 :=H_0^1(\Omega)\cap H^3(\Omega)\times H_0^1(\Omega)\cap H^2(\Omega)\times H_0^1(\Omega)$,
	\end{itemize}
and that the non-degeneracy condition \eqref{eq:alphagamma_pos} holds. Then there exists $\bar{\tau}>0$ such that for $\tau \in (0, \bar{\tau})$ and sufficiently small $\|\nabla \gamma\|_{L^\infty L^3}$
, there exists a unique solution $(\psi, z)$ of the problem 
	\begin{equation} \label{ibvp_linear_Kuz}
	\begin{aligned}
	\begin{cases}
	\tau z_{tt} +(\alpha-\tau \tfrac{c^2}{b}) z_t - b \Delta z-\gamma\tfrac{c^2}{b} z +\gamma\tfrac{c^4}{b^2} \psi=f \quad \mbox{ in }\Omega\times(0,T), \\[1mm]
	z=\psi_t+\tfrac{c^2}{b}\psi \quad \mbox{ in }\Omega\times(0,T), \\[1mm]
	\psi=0 \quad \mbox{ on } \partial \Omega\times(0,T),\\[1mm]
	(\psi, \psi_t, \psi_{tt})=(\psi_0, \psi_1, \psi_2) \quad \mbox{ in }\Omega\times \{0\},
	\end{cases}
	\end{aligned}
	\end{equation}
	that satisfies $(\psi,z)\in L^\infty(0,T;H^3(\Omega))\times (L^\infty(0,T;H^2(\Omega))\cap W^{1,\infty}(0;T;H_0^1(\Omega)))$; in other words,
	\begin{equation}\label{regularity_Kuz}
	\begin{aligned}
	\psi\in X^K:=L^\infty(0,T;H^3(\Omega))\cap W^{1,\infty}(0,T;H^2(\Omega))\cap W^{2,\infty}(0;T;H_0^1(\Omega)).
	\end{aligned}
	\end{equation}
Furthermore, there exists a $C(\gamma,T)>0$, which does not depend on $\tau$, such that
	\begin{equation}\label{reg_est_Kuz}
	\begin{aligned}
	&\|\psi\|_{L^\infty H^3}^2+\|\psi_t+\tfrac{c^2}{b}\psi \|_{L^\infty H^2 }^2
    +\|\psi_{tt}+\tfrac{c^2}{b}\psi_t \|_{L^2 H_0^1 }^2
	+\tau \|\psi_{tt}+\tfrac{c^2}{b}\psi_t \|_{L^\infty H_0^1 }^2\\
	\leq& \, C(\gamma,T) \left(|\psi_0|_{H^3}^2+|\psi_1|_{H^2}^2+\tau|\psi_2|_{H^1}^2+\|f\|_{L^2 H^1}^2\right).
	\end{aligned}
	\end{equation}
\end{theorem}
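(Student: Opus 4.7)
The plan is to adapt the Galerkin strategy of Theorem~\ref{th:wellposedness_lin} to the $z$-reformulation and to derive two coupled a priori bounds: one for $z^n=\psi^n_t+\tfrac{c^2}{b}\psi^n$, controlling the norms in \eqref{reg_est_Kuz} that involve $z$, and a second for $\nabla\Delta\psi^n$, delivering the $L^\infty(0,T;H^3)$ regularity of $\psi$. I work in the Dirichlet eigenbasis $\{w_i\}$ of $-\Delta$, for which both $w_i$ and $\Delta w_i$ lie in $V_n$, so that the formal integrations by parts using $z^n|_{\partial\Omega}=0$ are legitimate. Local existence of $\psi^n\in H^3(0,T_n;V_n)$ comes from standard ODE theory, and the uniform estimates below extend $T_n$ to $T$.

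For the first estimate I test the $z$-equation with $-\Delta z^n_t\in V_n$. After integration by parts, the principal part of the identity reads
\begin{align*}
\tfrac{\tau}{2}\tfrac{d}{dt}|\nabla z^n_t|^2+\int_\Omega\gamma|\nabla z^n_t|^2\,dx+\tfrac{b}{2}\tfrac{d}{dt}|\Delta z^n|^2=-\int_\Omega z^n_t\,\nabla\gamma\cdot\nabla z^n_t\,dx+(\text{lower order}),
\end{align*}
in which the $\nabla\gamma$-correction is absorbed into $\underline\gamma|\nabla z^n_t|^2$ by the smallness of $\|\nabla\gamma\|_{L^\infty L^3}$, mirroring the mechanism in Theorem~\ref{th:wellposedness_lin}. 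The lower-order pairings $(-\gamma\tfrac{c^2}{b}z^n,-\Delta z^n_t)_{L^2}$, $(\gamma\tfrac{c^4}{b^2}\psi^n,-\Delta z^n_t)_{L^2}$ and the source $(f,-\Delta z^n_t)_{L^2}$ are integrated by parts in time, which is what keeps the regularity demand on $f$ at $L^2H^1\cap H^1L^2$ rather than $L^2H^2$. Gronwall then yields the $\tau$-independent bound on $\|\Delta z^n\|^2_{L^\infty L^2}+\|\nabla z^n_t\|^2_{L^2L^2}+\tau\|\nabla z^n_t\|^2_{L^\infty L^2}$.

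For the $L^\infty H^3$ bound on $\psi^n$, I view the original PDE as the first-order evolution $b(\Delta\psi^n)_t+c^2\Delta\psi^n=\tau\psi^n_{ttt}+\alpha\psi^n_{tt}-f$ for $\Delta\psi^n$, take its gradient, and test with $\nabla\Delta\psi^n$. The right-hand side contains the problematic $\int_0^t(\tau\nabla\psi^n_{sss},\nabla\Delta\psi^n)_{L^2}\,ds$, which I handle via integration by parts in time:
\begin{align*}
\int_0^t(\tau\nabla\psi^n_{sss},\nabla\Delta\psi^n)_{L^2}\,ds=\tau(\nabla\psi^n_{ss},\nabla\Delta\psi^n)_{L^2}\Big|_0^t-\int_0^t(\tau\nabla\psi^n_{ss},(\nabla\Delta\psi^n)_s)_{L^2}\,ds.
\end{align*}
The boundary-in-time pieces are controlled using $\tau\|\nabla\psi^n_{tt}\|_{L^\infty L^2}\leq C\sqrt{\tau}$, where the $\sqrt{\tau}$-scale arises from the first estimate applied to $\nabla\psi^n_{tt}=\nabla z^n_t-\tfrac{c^2}{b}\nabla\psi^n_t$. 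The interior piece is closed by expressing $(\nabla\Delta\psi^n)_s$ through the ODE once more, producing a beneficial $-\tfrac{\tau^2}{2b}|\nabla\psi^n_{tt}(t)|^2$ (from telescoping), a sign-favourable $-\tfrac{\tau\underline\alpha}{b}\int_0^t|\nabla\psi^n_{ss}|^2$, and $O(\tau)$ remainders bounded by the first estimate and Young's inequality. With $\bar\tau$ small enough that all residual $O(\sqrt{\tau})$ contributions can be absorbed, Gronwall delivers the $\tau$-uniform $\|\nabla\Delta\psi^n\|^2_{L^\infty L^2}$ bound, hence $\psi\in L^\infty(0,T;H^3)$ by $H_0^1\cap H^3$ elliptic regularity. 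The Galerkin passage to the limit via weak-$\star$ compactness identifies the limit as the solution, and uniqueness follows by applying the same estimates to the difference of two solutions.

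The main obstacle is precisely the $\tau\psi_{ttt}$ term in the $\nabla\Delta\psi$ estimate: controlling it directly would amount to bounding $\nabla\Delta z$ in $L^2L^2$, which via the $z$-equation $b\nabla\Delta z=\tau\nabla z_{tt}+\nabla(\gamma\psi_{tt})-\nabla f$ feeds back circularly into the unknown. The integration-by-parts-in-time argument, combined with the smallness restriction $\tau<\bar\tau$, is what breaks this circle.
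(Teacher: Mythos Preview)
Your overall architecture---two energy identities at the Galerkin level, one for $z^n$ via testing the $z$-equation with $-\Delta z^n_t$, one for $\nabla\Delta\psi^n$ via testing the $\psi$-equation with $(-\Delta)^2\psi^n$---matches the paper's. But three technical choices in your proposal diverge from the paper, and the first of them is a genuine gap relative to the stated conclusion.

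\textbf{Spatial versus temporal integration by parts for $f$.} You propose to integrate the pairings $(-\gamma\tfrac{c^2}{b}z^n,-\Delta z^n_t)$, $(\gamma\tfrac{c^4}{b^2}\psi^n,-\Delta z^n_t)$ and $(f,-\Delta z^n_t)$ by parts in \emph{time}. The paper instead integrates by parts in \emph{space}, writing $(f,-\Delta z^n_t)_{L^2}=(\nabla f,\nabla z^n_t)_{L^2}-\langle\nu\cdot\nabla z^n_t,f\rangle_{H^{-1/2},H^{1/2}}$ and similarly for the lower-order terms. This is precisely why the final bound \eqref{reg_est_Kuz} contains only $\|f\|_{L^2H^1}^2$ and \emph{not} $\|f_t\|_{L^2L^2}^2$; the paper explicitly remarks on this after the theorem. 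Your temporal IBP would put $\|f_t\|_{L^2L^2}$ on the right-hand side, which does not match the stated estimate and, more importantly, would wreck the fixed-point argument in Section~\ref{sec:wellposed_Kuz}: there $f=2\nabla\phi\cdot\nabla\phi_t$ and $f_t$ contains $\nabla\phi_{tt}$, which is only $O(1/\sqrt{\tau})$-bounded.

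\textbf{Coupled versus sequential Gronwall.} You write that after the first identity ``Gronwall then yields the $\tau$-independent bound'' and then proceed to the second identity separately. The paper does not decouple them: the right-hand side of the first identity retains a term $\|\nabla(-\Delta)\psi^n\|_{L^2L^2}$ (from $\gamma\tfrac{c^4}{b^2}\psi^n$), while the right-hand side of the second contains $\tau\|\nabla z^n_t\|_{L^\infty L^2}^2$ and $\|\Delta z^n\|_{L^\infty L^2}^2$. The paper therefore forms the weighted sum (first) $+\,\lambda\cdot$(second) with a small $\lambda>0$, absorbs the cross terms using smallness of $\tau$ and $\lambda$, and applies Gronwall to the combined energy $\eta(t)$.

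\textbf{Handling of $\tau\psi^n_{ttt}$ in the second identity.} Your re-substitution of the PDE for $(\nabla\Delta\psi^n)_s$ is unnecessary. Since $\Delta\psi^n_t\in V_n\subset H_0^1(\Omega)$, one has directly $(\nabla\psi^n_{tt},\nabla(-\Delta)\psi^n_t)_{L^2}=\tfrac12\tfrac{d}{dt}|\Delta\psi^n_t|^2$, so the $\tau$-term integrates to $\tau(\nabla\psi^n_{tt},\nabla(-\Delta)\psi^n)\big|_0^t-\tfrac{\tau}{2}|\Delta\psi^n_t|^2\big|_0^t$. Expressing $\psi^n_{tt}=z^n_t-\tfrac{c^2}{b}z^n+\tfrac{c^4}{b^2}\psi^n$ and $\psi^n_t=z^n-\tfrac{c^2}{b}\psi^n$, both evaluated terms are absorbed by the combined left-hand side for small $\tau$ and $\lambda$. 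Also note that the $\alpha\psi^n_{tt}$ term in the second identity---which you do not discuss---requires its own space--time--space integration by parts and is exactly where the hypotheses $\alpha\in W^{1,1}(0,T;H^1)\cap L^1(0,T;H^2)$ enter.
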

Note that while $f\in H^1(0,T; L^2(\Omega))\cap L^2(0,T;H^1(\Omega))$ is required, in the right-hand side of the energy estimate only $\|f\|_{L^2 H^1}$, but not $\|f\|_{H^1 L^2}$ appears.
\begin{proof}
\noindent \textbf{Step 1: Existence of a solution.}
Theorem \ref{th:wellposedness_lin} implies existence of a solution $(\psi,z)$ of \eqref{ibvp_linear_Kuz} with regularity as stated in \eqref{regularity}
and 
\[
	\begin{aligned}
	z \in \, & L^{\infty}(0;T;H_0^1(\Omega) \cap H^2(\Omega)) \cap W^{1, \infty}(0,T; H_0^1(\Omega))
	\cap H^2(0,T; L^2(\Omega)).
	\end{aligned}
\]
Note that $z_t$ inherits the homogeneous Dirichlet boundary conditions from $\psi$.
Thus it only remains to establish the higher order energy estimates.
For this purpose, we return to the Galerkin approximation \eqref{ibvp_semi-discrete} and define $z^n=\psi^n+\tfrac{c^2}{b}\psi^n$. \\[3mm]
\noindent \textbf{Step 2: A priori estimates.} As in Section \ref{sec:enest}, our goal is to obtain a bound for $\psi^n$ that is uniform with respect to $n$. To this end, we test the spatially discretized version of our problem \eqref{ibvp_linear_Kuz} with two test functions. \\[2mm]
\noindent \textbf{The first energy identity.} Problem \eqref{ibvp_semi-discrete}  can be equivalently rewritten as
\begin{equation} \label{ibvp_semi-discrete_equiv}
\begin{aligned} 
\begin{cases}
\left(\tau z^n_{tt}+\gamma z^n_{t}-b\Delta z^n - \tfrac{c^2}{b}\gamma z^n+\gamma 
\tfrac{c^4}{b^2} \psi^n, \phi \right)_{L^2} = (f, \phi)_{L^2}, \\[1mm]
\text{for every $\phi \in V_n$ pointwise a.e. in $(0,T)$}, \\[1mm]
z^n=\psi^n_t+\tfrac{c^2}{b} \psi^n, \\[1mm]
(\psi^n(0), \psi_t^n(0), \psi^n_{tt}(0))=(\psi^n_0, \psi^n_1, \psi^n_2),
\end{cases}
\end{aligned}
\end{equation}
where $V_n$ is defined as in the proof of Theorem \ref{th:wellposedness_lin}, as the span of the first $n$ eigenfunctions of the Laplacian with homogeneous Dirichlet boundary conditions. Multiplying the first equation in \eqref{ibvp_semi-discrete_equiv} by $-\Delta z^n_t\in V_n$ and integrating over $\Omega$ and $(0,t)$ yields the energy identity
\begin{equation}\label{enid1_Kuz}
\begin{aligned}
& \tfrac{\tau}{2} |\nabla z^n_t(t)|^2
+\int_0^t|\sqrt{\gamma}\nabla z^n_t|^2\, \textup{d}s 
+\tfrac{b}{2}|-\Delta z^n(t)|^2\\
=&\, \tfrac{\tau}{2} |\nabla z^n_t(0)|^2
-\int_0^t(z_t^n \, \nabla\gamma,\nabla z^n_t)_{L^2}\, \textup{d}s
+\tfrac{b}{2}|-\Delta z^n(0)|^2\\
&-\tfrac{c^2}{b}\int_0^t(\gamma \nabla z^n,\nabla z^n_t)_{L^2}\,\textup{d}s -\tfrac{c^2}{b}\int_0^t(z^n \nabla \gamma,\nabla z^n_t)_{L^2}\ \textup{d}s\\
&+\int_0^t(\nabla f-\gamma\tfrac{c^4}{b^2}\nabla \psi^n-\tfrac{c^4}{b^2}\psi^n \nabla\gamma,\nabla z^n_t)_{L^2}\, \textup{d}s\\
&-\int_0^t \langle \nu\cdot \nabla z^n_t, f\rangle_{H^{-1/2}(\partial\Omega),H^{1/2}(\partial\Omega)} \, \textup{d}s
=: \, \textbf{rhs}_1(t),
\end{aligned}
\end{equation}
where we have skipped the argument $(s)$ under the time integral for notational simplicity. To derive \eqref{enid1_Kuz}, we have used the identity
\[
\begin{aligned}
(\gamma z^n_t,-\Delta z^n_t)_{L^2}=|\sqrt{\gamma} \nabla z^n_t|^2+(z^n_t\nabla\gamma,\nabla z^n_t)_{L^2}.
\end{aligned}
\]
Furthermore, we have made use of 
\[
\begin{aligned}
-\int_0^t(\gamma\tfrac{c^2}{b} z^n,-\Delta z^n_t)_{L^2}\, \textup{d}s
=\,
-\int_0^t  (\gamma\tfrac{c^2}{b} \nabla z^n,\nabla z^n_t )_{L^2}\, \textup{d}s-\int_0^t (z^n \nabla \gamma \tfrac{c^2}{b},\nabla z^n_t)_{L^2}\, \textup{d}s
\end{aligned}
\]
as well as the identity
\begin{equation*}
\begin{aligned}
\int_0^t (f-\gamma\tfrac{c^4}{b^2}\psi^n,-\Delta z^n_t)_{L^2}\, \textup{d}s
=& \,\begin{multlined}[t] \int_0^t(\nabla f-\gamma\tfrac{c^4}{b^2}\nabla \psi^n-\tfrac{c^4}{b^2}\psi^n \nabla\gamma,\nabla z^n_t)_{L^2}\, \textup{d}s\\
-\int_0^t \langle \nu\cdot \nabla z^n_t, f\rangle_{H^{-1/2}(\partial\Omega),H^{1/2}(\partial\Omega)} \, \textup{d}s.\end{multlined}
\end{aligned}
\end{equation*}

\noindent \textbf{The second energy identity.}	Our aim is to obtain a bound on $\psi$ in the $H^3(\Omega)$ norm. To this, end we  test \eqref{ibvp_semi-discrete} with $\phi=(-\Delta)^2 \psi^n\in V_n$ (due to the fact that $\psi^n$ is a linear combination of eigenfunctions of $-\Delta$) which yields the second energy identity
\begin{equation}\label{enid2_Kuz}
\begin{aligned}
& c^2\int_0^t|\nabla(-\Delta)\psi^n|^2\, \textup{d}s
+\tfrac{b}{2}|\nabla(-\Delta)\psi^n(s)|^2 \, \Bigl \vert_0^t\\
=& \, -\tau (\nabla\psi^n_{tt}(s),\nabla(-\Delta)\psi^n(s))_{L^2} \, \Bigl \vert_0^t 
+\tfrac{\tau}{2}|-\Delta\psi^n_t(s)|^2 \, \Bigl \vert_0^t \\
&+\int_0^t (-\Delta[\alpha\psi^n_t],-\Delta\psi^n_t)_{L^2}\,\textup{d}s  \\
&-(\alpha(s)\nabla\psi^n_t(s)+\psi_t(s)\nabla\alpha(s),\nabla(-\Delta)\psi^n(s))_{L^2}\,  \Bigl \vert_0^t\\
&+\int_0^t (\psi^n_t\nabla\alpha_t+\alpha_t\nabla\psi^n_t,\nabla(-\Delta)\psi^n)_{L^2}\, \textup{d}s  
+\int_0^t (\nabla f,\nabla(-\Delta)\psi^n)_{L^2}\, \textup{d}s\\
&-\int_0^t \langle \nu\cdot \nabla (-\Delta)\psi^n, f\rangle_{H^{-1/2}(\partial\Omega),H^{1/2}(\partial\Omega)} \, \textup{d}s\,=: \, \textbf{rhs}_2(t),
\end{aligned}
\end{equation}
Above, we have made use of
\[
\begin{aligned}
(\psi^n_{ttt},(-\Delta)^2\psi^n)_{L^2}=&\, (\nabla\psi^n_{ttt},\nabla(-\Delta)\psi^n)_{L^2}\\
=& \,\frac{\textup{d}}{\textup{d}t}\Bigl[(-\nabla\psi^n_{tt},\nabla(-\Delta)\psi^n)_{L^2}-\tfrac12|-\Delta\psi^n_t|^2\Bigr],\\
\end{aligned}
\]
and the fact that $-\Delta \psi^n_t=0$ on $\partial \Omega$. Morever, we rewrote the $\alpha$ term as follows
\begin{equation*}
\begin{aligned}
&\int_0^t (\alpha\psi^n_{tt},(-\Delta)^2\psi^n)_{L^2}\, \textup{d}s \\
\stackrel{\text{i.b.p. in space}}{=}& \, \int_0^t (\nabla[[\alpha\psi^n_t]_t-\alpha_t\psi_t],\nabla(-\Delta)\psi^n)_{L^2}\, \textup{d}s\\ 
\stackrel{\text{i.b.p. in time}}{=}&\, (\nabla[\alpha(s)\psi^n_t(s)],\nabla(-\Delta)\psi^n(s))_{L^2} \, \Bigl \vert_0^t\\
&-\int_0^t (\nabla[\alpha\psi^n_t],\nabla(-\Delta)\psi^n_t)_{L^2}\,  \textup{d}s
-\int_0^t (\nabla[\alpha_t\psi^n_t],\nabla(-\Delta)\psi^n)_{L^2}\, \textup{d}s \\
\stackrel{\text{i.b.p. in space}}{=}& \,(\nabla[\alpha(s)\psi^n_t(s)],\nabla(-\Delta)\psi^n(s))_{L^2} \, \Bigl \vert_0^t\\
& -\int_0^t (-\Delta[\alpha\psi^n_t],-\Delta\psi^n_t)_{L^2}\,\textup{d}s 
-\int_0^t (\nabla[\alpha_t\psi^n_t],\nabla(-\Delta)\psi^n)_{L^2}\, \textup{d}s,
\end{aligned}
\end{equation*}
where we used again that $-\Delta \psi_t^n=0$ on $\partial \Omega$. Note that under the assumptions made on $\alpha$, we have that $(\alpha\psi^n_t)(t)\in H_0^1(\Omega) \cap H^2(\Omega)$ for almost every $t\in(0,T)$, since $\psi^n_t(t)\in H_0^1(\Omega) \cap H^2(\Omega)$. \\
\indent The left-hand sides of our two energy identities \eqref{enid1_Kuz} and \eqref{enid2_Kuz} can be estimated from below by 
\begin{equation} \label{lhs_1}
\begin{aligned}
&
\tfrac{\tau}{2} |\nabla z^n_t(t)|^2
+\int_0^t|\sqrt{\gamma}\nabla z^n_t|^2\, \textup{d}s 
+\tfrac{b}{2}|-\Delta z^n(t)|^2 \\
\geq&\, \tfrac18\left (\tau \|\nabla z^n_t\|^2_{L_t^\infty L^2}+\ul{\gamma} \|\nabla z^n_t\|^2_{L_t^2 L^2}+ b \|-\Delta z^n\|^2_{L_t^\infty L^2}\right),
\end{aligned}
\end{equation}
and by
\begin{equation} \label{lhs_2}
\begin{aligned}
&
c^2\int_0^t|\nabla(-\Delta) \psi^n|^2\, \textup{d}s
+\tfrac{b}{2}|\nabla(-\Delta) \psi^n(t)|^2 \\
\geq& \, \tfrac14 \, \left(2c^2\|\nabla(-\Delta) \psi^n\|^2_{L_t^2 L^2}
+ b \|\nabla(-\Delta)\psi^n\|^2_{L_t^\infty L^2}\right),
\end{aligned}
\end{equation}
respectively. 

We will consider the weighted sum \eqref{enid1_Kuz} plus $\lambda>0$ times \eqref{enid2_Kuz}, which therefore can be bounded from below by  
\begin{equation}\label{lhs}
\begin{aligned}
\textbf{lhs}(t)=& \,\begin{multlined}[t] \tfrac18 \left(\tau \|\nabla z^n_t\|^2_{L_t^\infty L^2}+\ul{\gamma} \|\nabla z^n_t\|^2_{L_t^2 L^2}+ b \|-\Delta z^n\|^2_{L_t^\infty L^2}\right) \\
+ \tfrac{\lambda}{4}\left (2c^2\|\nabla(-\Delta) \psi^n\|^2_{L_t^2 L^2}
+  b \|\nabla(-\Delta)\psi^n\|^2_{L_t^\infty L^2}\right). \end{multlined}
\end{aligned}
\end{equation}
It then remains to estimate the right-hand sides, $\text{rhs}_1(t)$ and $\lambda \text{rhs}_2(t)$. \\[2mm]
\noindent \textbf{Estimates of the right-hand sides.} For estimating the right-hand sides in \eqref{enid1_Kuz} and \eqref{enid2_Kuz}, we can then use the norms of $z$, $z_t$, and $\psi$ appearing in the lower bounds \eqref{lhs_1} and \eqref{lhs_2}. Furthermore, we can employ the continuous embeddings \eqref{embeddigs}
as well as boundedness of $(-\Delta)^{-1}:L^2(\Omega)\to H_0^1(\Omega)\cap H^2(\Omega)$ and the Poincar\'{e}-Friedrichs inequality \eqref{ellreg-PF}.
Additionally, we employ the identities $$\psi_t=z-\tfrac{c^2}{b}\psi, \quad \psi_{tt}=z_t-\tfrac{c^2}{b}z+\tfrac{c^4}{b^2}\psi.$$ \\
\indent To simplify the notation, we introduce two constants depending on the initial data,
\[
\begin{aligned}
C_1(\psi_0,\psi_1,\psi_2;\tau)
=&\, \tfrac{\tau}{2} |\nabla z_t(0)|^2
+\tfrac{b}{2}|-\Delta z(0)|^2\\
=&\, \tfrac{\tau}{2} |\nabla \psi_2+\tfrac{c^2}{b}\nabla\psi_1|^2
+\tfrac{b}{2}|-\Delta \psi_1-\tfrac{c^2}{b}\Delta\psi_0 |^2, 
\end{aligned}
\]
as well as
\[
\begin{aligned}
C_2(\psi_0,\psi_1,\psi_2;\tau)=& \,
\tfrac{b}{2}\left|\nabla(-\Delta)\psi_0 \right|^2 
+\tau (\nabla\psi_2,\nabla(-\Delta)\psi_0)_{L^2}
-\tfrac{\tau}{2} \left|-\Delta\psi_1 \right|^2\\
&+(\alpha(0)\nabla\psi_1+\psi_1\nabla\alpha(0),\nabla(-\Delta)\psi_0)_{L^2}.
\end{aligned}
\]
\indent By applying H\"older's inequality and the trace theorem, we get for the right-hand side in \eqref{enid1_Kuz},
\begin{equation*}
\begin{aligned}
&\textbf{rhs}_1(t)\\
\leq& \, C_1(\psi_0,\psi_1,\psi_2;\tau)+\|\nabla z^n_t\|_{L_t^2 L^2} \|z^n_t\|_{L_t^2 L^6} \|\nabla\gamma\|_{L^\infty L^3}\\
&+\|\nabla z^n_t\|_{L_t^2 L^2} \|\nabla z^n\|_{L_t^2 L^6} \tfrac{c^2}{b}\|\gamma\|_{L^\infty L^3}
+\|\nabla z^n_t\|_{L_t^2 L^2} \|z^n\|_{L_t^2 L^\infty} \tfrac{c^2}{b}\|\nabla\gamma\|_{L^\infty L^2}\\
&+\|\nabla z^n_t\|_{L_t^2 L^2} \Bigl(
\|\nabla f\|_{L^2 L^2}
+\tfrac{c^4}{b^2} \|\gamma\|_{L^\infty L^2} \|\nabla \psi^n\|_{L_t^2 L^\infty}
+\tfrac{c^4}{b^2} \|\nabla \gamma\|_{L^\infty L^2} \|\psi^n\|_{L_t^2 L^\infty}\Bigr)\\
&+\|\nu\cdot\nabla z^n_t\|_{L_t^2 H^{-1/2}(\partial\Omega)} \|f\|_{L^2 H^{1/2}(\partial\Omega)},
\end{aligned}
\end{equation*}
a.e. in time. We further obtain, by making use of Young's inequality \eqref{Young} and the embedding results,
\begin{equation}\label{estrhs1_Kuz}
\begin{aligned}
&\textbf{rhs}_1(t)\\
\leq& \, C_1(\psi_0,\psi_1,\psi_2;\tau)
+C_{H^1, L^6}^\Omega \usmall{\|\nabla\gamma\|_{L^\infty L^3}} \|\nabla z^n_t\|_{L_t^2 L^2}^2 
+\usmall{\tfrac{\ul{\gamma}}{16}}\|\nabla z^n_t\|_{L_t^2 L^2}^2\\
&+\tfrac{8}{\ul{\gamma}}\Bigl(
C_{(-\Delta)^{-1}}^\Omega C_{H^2, W^{1,6}}^\Omega
\|-\Delta z^n\|_{L_t^2 L^2}
\tfrac{c^2}{b}\|\gamma\|_{L^\infty L^3}\\
&+C_{(-\Delta)^{-1}}^\Omega C_{H^2, L^\infty}^\Omega\|-\Delta z^n\|_{L_t^2 L^2} \tfrac{c^2}{b}\usmall{\|\nabla\gamma\|_{L^\infty L^2}} 
+\|\nabla f\|_{L^2 L^2} \\
&+\tfrac{c^4}{b^2} (\|\gamma\|_{L^\infty L^2}^2 + C_{PF}\|\nabla \gamma\|_{L^\infty L^2})
C_{(-\Delta)^{-1}}^\Omega C_{H^2, L^\infty}^\Omega 
\|\nabla(-\Delta) \psi^n\|_{L_t^2 L^2}
\\
&+ (C_{tr}^\Omega)^2 C_{PF}^\Omega \|f\|_{L^2 H^1} \Bigr)^2,
\end{aligned}
\end{equation}
since for the Galerkin discretization by eigenfunctions of the Laplacian, we have $(-\Delta) \psi^n \in H_0^1(\Omega)\cap H^2(\Omega)$ for smooth $\Omega$. 
All terms on the right-hand side except for 
\begin{equation}\label{rhs1til}
\begin{aligned}
\widetilde{\textbf{\mbox{rhs}}}_1:=& \, \begin{multlined}[t] C_1(\psi_0,\psi_1,\psi_2;\tau)+\tfrac{16}{\ul{\gamma}}\left( (1+(C_{tr}^\Omega)^2 C_{PF}^\Omega)\|f\|_{L^2 H^1} \right.\\
+C_{(-\Delta)^{-1}}^\Omega C_{H^2, W^{1,6}}^\Omega
\|-\Delta z^n\|_{L_t^2 L^2}\,
\tfrac{c^2}{b}\|\gamma\|_{L^\infty L^3}\\
+\tfrac{c^4}{b^2} \, \|\gamma\|_{L^\infty L^2}^2 \, C_{(-\Delta)^{-1}}^\Omega C_{H^2, L^\infty}^\Omega 
\|\nabla(-\Delta) \psi^n\|_{L_t^2 L^2}
)^2 \end{multlined}
\end{aligned}
\end{equation}
can be absorbed into the left-hand side \eqref{lhs} by making $\|\nabla\gamma\|_{L^\infty L^3}$ small. The right-hand side in \eqref{enid2_Kuz} can be estimated as follows
\begin{equation*}
\begin{aligned}
&\textbf{rhs}_2(t)\\
\leq& \, C_2(\psi_0,\psi_1,\psi_2;\tau)
+ \tau\|\nabla(-\Delta) \psi^n\|_{L_t^\infty L^2} \|\nabla z^n_t-\tfrac{c^2}{b}\nabla z^n+\tfrac{c^4}{b^2}\nabla \psi^n  \|_{L_t^\infty L^2}\\
&+\tfrac{\tau}{2}\|-\Delta z^n+\tfrac{c^2}{b}\Delta\psi^n  \|_{L_t^\infty L^2}^2\\
&+ \|-\Delta z^n+\tfrac{c^2}{b}\Delta\psi^n \|_{L_t^\infty L^2}
\Bigl(
\|-\Delta z^n+\tfrac{c^2}{b}\Delta\psi^n\|_{L_t^\infty L^2}\|\alpha\|_{L^1 L^\infty}\\
&+2\|\nabla z^n-\tfrac{c^2}{b}\nabla\psi^n\|_{L_t^\infty L^6}\|\nabla\alpha\|_{L^1 L^3}
+\|z^n-\tfrac{c^2}{b}\psi^n \|_{L_t^\infty L^\infty}\|-\Delta\alpha\|_{L^1 L^2}
\Bigr)\\
&+\|\nabla(-\Delta)\psi^n\|_{L_t^\infty L^2} \Bigl(
\|\nabla f\|_{L^1 L^2}
+\|\nabla z^n-\tfrac{c^2}{b}\nabla\psi^n \|_{L_t^\infty L^6}
(\|\alpha\|_{L^\infty L^3}+\|\alpha_t\|_{L^1 L^3}\\
&+\|z^n-\tfrac{c^2}{b}\psi^n \|_{L_t^\infty L^\infty}
\left(\|\nabla\alpha\|_{L^\infty L^2}+\|\nabla\alpha_t\|_{L^1 L^2}
\right) 
+(C_{tr}^\Omega)^2 C_{PF}^\Omega \|f\|_{L^1 H^1}
\Bigr),
\end{aligned}
\end{equation*}
where we have estimated the boundary term by means of the trace theorem
\[
\begin{aligned}
-\int_0^t \langle \nu\cdot \nabla (-\Delta)\psi^n, f\rangle_{H^{-1/2},H^{1/2}} \, \textup{d}s
\leq& \, (C_{tr}^\Omega)^2 \|(-\Delta)\psi^n\|_{L_t^\infty H^1} \|f\|_{L^1 H^1} \\[1mm]
\leq& \, (C_{tr}^\Omega)^2 C_{PF}^\Omega \|\nabla(-\Delta)\psi^n\|_{L_t^\infty L^2} \|f\|_{L^1 H^1}\,.
\end{aligned}
\]
We further have for $\lambda>0$ by Young's inequality that
\begin{equation}\label{estrhs2}
\begin{aligned}
&\lambda \cdot \textbf{rhs}_2(t) \,\\
\leq& \, \lambda \, C_2(\psi_0,\psi_1,\psi_2;\tau)
+\usmall{\lambda\tfrac{b}{16}}\|\nabla(-\Delta) \psi^n\|_{L_t^\infty L^2}^2\\
&+\usmall{\lambda\tfrac{4\tau}{b}}\, \tau \|\nabla z^n_t-\tfrac{c^2}{b}\nabla z^n+\tfrac{c^4}{b^2}\nabla \psi^n\|_{L_t^\infty L^2}^2 
+\usmall{\lambda\tfrac{\tau}{2}}\|-\Delta z^n+\tfrac{c^2}{b}\Delta\psi^n \|_{L_t^\infty L^2}^2\\
&+\usmall{\lambda}\|-\Delta z^n+\tfrac{c^2}{b}\Delta\psi^n \|_{L_t^\infty L^2}^2 \Bigl(\|\alpha\|_{L^1 L^\infty}+2C_{(-\Delta)^{-1}}^\Omega C_{H^2, W^{1,6}}^\Omega\|\nabla\alpha\|_{L^1 L^3}\\
&+C_{(-\Delta)^{-1}}^\Omega C_{H^2, L^\infty}^\Omega\|-\Delta\alpha\|_{L^1 L^2}
\Bigr)+\usmall{\lambda\tfrac{b}{16}}\|\nabla(-\Delta)\psi^n\|_{L_t^\infty L^2}^2 \\
&+\lambda\tfrac{8}{b}\left(1+\left(C_{tr}^\Omega \right)^2 C_{PF}^\Omega \right)^2\|f\|_{L^1 H^1}^2\\
&+\usmall{\lambda}\|-\Delta z^n-\tfrac{c^2}{b}(-\Delta)\psi^n\|_{L_t^\infty L^2}^2
  \tfrac{8}{b}\left(C_{(-\Delta)^{-1}}^\Omega \right)^2 \Bigl( C_{H^2, W^{1,6}}^\Omega (\|\alpha\|_{L^\infty L^3}\\ &+\|\alpha_t\|_{L^1 L^3})
+C_{H^2, L^\infty}^\Omega (\|\nabla\alpha\|_{L^\infty L^2}+\|\nabla\alpha_t\|_{L^1 L^2})\Bigr)^2,
\end{aligned}
\end{equation}
where by making $\tau$ and $\lambda$ small, all terms except for those containing the initial data and the inhomogeneity, 
\begin{equation}\label{rhs2til}
\begin{aligned}
&\lambda \cdot \widetilde{\textbf{rhs}}_2:=
\lambda C_2(\psi_0,\psi_1,\psi_2;\tau)+\lambda\tfrac{8}{b}\left(1+(C_{tr}^\Omega)^2 C_{PF}^\Omega \right)^2\|f\|_{L^1 H^1}^2
\end{aligned}
\end{equation} 
can be absorbed into the left-hand side given in \eqref{lhs}.
   
We now combine the energy estimates obtained from \eqref{enid1_Kuz}, and $\lambda$ times \eqref{enid2_Kuz} with a small constant $\lambda>0$ and absorb the indicated terms from the right-hand side estimates \eqref{estrhs1_Kuz}, \eqref{estrhs2} into the left-hand side so that only $\widetilde{\textbf{rhs}}_1$ and $\lambda \widetilde{\textbf{rhs}}_2$ remain on the right-hand side, cf. \eqref{lhs}, \eqref{rhs1til}, \eqref{rhs2til}. 
Therewith, we end up with an inequality of the form
\begin{align} \label{eta_estimate}
\eta(t)\leq C\left(\int_0^t \eta(s)\,\textup{d}s  +\|\psi_0\|^2_{H^3}+\|\psi_1\|^2_{H^2}+\tau\|\psi_2\|^2_{H^1}+ \|f\|_{L^2 H^1}^2\right),
\end{align}
for 
\begin{equation*}
\begin{aligned}
\eta(t)=& \,\begin{multlined}[t] \tfrac12\left(\tau \|\nabla z^n_t\|^2_{L^\infty(0,t;L^2)}+\ul{\gamma} \|\nabla z^n_t\|^2_{L^2(0,t;L^2)}+ b \|-\Delta z^n\|^2_{L^\infty(0,t;L^2)}\right)\\
+\lambda \left(2c^2\|\nabla(-\Delta) \psi^n\|^2_{L^2(0,t;L^2))}
+ b \|\nabla(-\Delta)\psi^n\|^2_{L^\infty(0,t;L^2)}\right), \end{multlined}
\end{aligned}
\end{equation*}
to which we employ Gronwall's lemma. 

To obtain a uniform bound on the full $H^3(\Omega)$ norm of $\psi^n$, we combine the $|\nabla(-\Delta)\psi^n|_{L^2}$ term with 
	$|-\Delta z^n|_{L^2}$ and the fact that $$\psi^n(x,t) =  e^{-(c^2/b)t} \psi_0(x) +\int_0^t e^{-(c^2/b)(t-s)} z^n(x,s)$$
	for $t \in (0, T)$. In this way, we have
	\[
	\begin{aligned}
	|\psi^n(t)|_{H^3(\Omega)}
	\leq& \, C_{(-\Delta)^{-1}}^\Omega \Bigl(|\nabla(-\Delta)\psi^n(t)|_{L^2}+|(-\Delta)\psi^n(t)|_{L^2}\Bigr)\\
	\leq& \,\begin{multlined}[t] C_{(-\Delta)^{-1}}^\Omega \Bigl(|\nabla(-\Delta)\psi^n(t)|_{L^2}+|e^{-(c^2/b)t}(-\Delta)\psi_0|_{L^2}\\
	+\left|\int_0^t e^{-(c^2/b)(t-s)}(-\Delta)z(s)\, \textup{d} s \right|_{L^2}\Bigr) \end{multlined}
	\end{aligned}
	\] 
	for a.e. $t \in (0,T)$. Altogether, we get the estimate
	\begin{equation}\label{reg_est_Kuz_n}
	\begin{aligned}
	& \begin{multlined}[t]\|\psi^n\|_{L^\infty H^3}^2+\|\psi^n_t+\tfrac{c^2}{b}\psi^n \|_{L^\infty H^2 }^2
    +\|\psi_{tt}+\tfrac{c^2}{b}\psi^n_t \|_{L^2 H_0^1 }^2 \\	
    +\tau  \|\psi^n_{tt}+\tfrac{c^2}{b}\psi^n_t\|_{L^\infty H_0^1 }^2 \end{multlined}\\
	\leq& \, C(\gamma,T) \left(|\psi_0|_{H^3}^2+|\psi_1|_{H^2}^2+\tau|\psi_2|_{H^1}^2+\|f\|_{L^2 H^1}^2\right),
	\end{aligned}
	\end{equation}
with a constant $C(\gamma,T)>0$ independent of $\tau$, provided $\|\nabla\gamma\|_{L^\infty L^3}$ is sufficiently small. \\[2mm]
\noindent \textbf{Step 3: Passing to the limit.} On account of estimate \eqref{reg_est_Kuz_n} and the Banach-Alaoglu theorem, we know that there exists a subsequence, denoted again by $\{\psi^n\}_{n \in \mathbb{N}}$, and a function $\tilde{\psi}$ such that
\begin{alignat*}{4}
\psi^n  &\relbar\joinrel\rightharpoonup \tilde{\psi} &&\text{ weakly-$\star$}  &&\text{ in } &&L^\infty(0,T;H_0^1(\Omega)\cap H^3(\Omega)),  \\
\psi_t^n &\relbar\joinrel\rightharpoonup \tilde{\psi}_t &&\text{ weakly-$\star$} &&\text{ in } &&L^\infty(0,T;H^1_0(\Omega)\cap H^2(\Omega)),\\
\psi_t^n &\relbar\joinrel\rightharpoonup \tilde{\psi}_t &&\text{ weakly} &&\text{ in } &&L^2(0,T;H_0^1(\Omega)),\\
\psi_{tt}^n &\relbar\joinrel\rightharpoonup \tilde{\psi}_{tt} &&\text{ weakly} &&\text{ in } &&L^2(0,T;H_0^1(\Omega)), \\
\psi_{tt}^n &\relbar\joinrel\rightharpoonup \tilde{\psi}_{tt} &&\text{ weakly-$\star$}  &&\text{ in } &&L^\infty(0,T;H_0^1(\Omega)).
\end{alignat*} 
By uniqueness of limits $\tilde{\psi}$ has to coincide with the solution $\psi$ according to Theorem \ref{th:wellposedness_lin}, which thus satisfies \eqref{reg_est_Kuz}.
\end{proof}

\section{Well-posedness for the nonlinear Kuznetsov-type wave equation for $\tau>0$ sufficiently small}\label{sec:wellposed_Kuz}

We next intend to employ the Banach fixed-point theorem to prove well-posedness for equation \eqref{KuznetsovMC}. To this end, we introduce the operator $\mathcal{T}$ that maps $\phi$ to a solution $\psi$ of 
\[
\tau\psi_{ttt}+(1-k\phi_t)\psi_{tt}-c^2\Delta \psi - b\Delta \psi_t
= 2\nabla\phi\cdot\nabla\phi_t\,,
\]
on some ball 
\begin{equation}\label{Brho_K}
\begin{aligned}
B_\rho^{X^K}=\Bigl\{\psi\in X^K \, :& \ \psi(0)=\psi_0\,, \ \psi_t(0)=\psi_1\,, \ \psi_{tt}(0)=\psi_2\,, \mbox{ and }\\ 
&\|\psi\|_{X^K}^2:=\tau \|z_t\|^2_{L^\infty H^1} + \|z_t\|^2_{L^2 H^1}\\
&\hspace{1.7cm}+\|z\|^2_{L^\infty H^2}+\|\psi\|^2_{L^\infty H^3}\leq \rho^2, \\
&\mbox{ for }z=\psi_t+\frac{c^2}{b}\psi \Bigr\}
\end{aligned}
\end{equation}
in the space $X^K$, defined in \eqref{XWXKspaces}. Thus, for establishing $\mathcal{T}$ as a self-mapping on $B_\rho^{X^K}$, it is crucial to prove that
$\alpha=1-k\phi_t$ and $f=2\nabla\phi\cdot\nabla\phi_t$ are in 
$W^{1,1}(0,T;H^1(\Omega))\cap L^\infty(0,T;W^{1,3}(\Omega)\cap L^1(0,T;H^2(\Omega)$
and $L^2(0,T;H^1(\Omega))$, respectively, and that the derivatives of $\alpha$ are small when $\phi\in B_\rho^{X^K}$. \\
\indent Concerning non-degeneracy, we assume that $\tau\in(0,\bar{\tau}]$ with $\bar{\tau}<\tfrac{b}{c^2}$ so that 
\begin{equation}\label{taubar}
\gamma^*:= 1-\bar{\tau}\tfrac{c^2}{b} >0\,.
\end{equation}
Therefore, keeping 
\begin{equation}\label{m}
\|\gamma-\gamma^*\|_{L^\infty L^\infty}
=\|\alpha-1\|_{L^\infty L^\infty}
=k\|\phi_t\|_{L^\infty L^\infty}\leq m
\end{equation} 
with $m<\gamma^*$ allows to choose
\begin{equation}\label{ulgamma}
\ul{\gamma}:=1-\bar{\tau}\tfrac{c^2}{b} -m>0\,, \quad \ul{\alpha}:=1-m>0
\end{equation}
in \eqref{eq:alphagamma_pos} independently of $\tau$, which will also be important for the considerations in Section \ref{sec:limits}.
Thus we also need to verify that \eqref{m} follows from $\phi\in B_\rho^{X^K}$.

To additionally obtain contractivity, based on the fact that the difference $\hat{\psi}=\psi_1-\psi_2=\mathcal{T}(\phi_1)-\mathcal{T}(\phi_2)$ solves
\begin{equation}\label{hat}
\begin{aligned}
&\tau\hat{\psi}_{ttt}+(1-k\phi_{1\,t})\hat{\psi}_{tt}-c^2\Delta \hat{\psi} - b\Delta \hat{\psi}_t \\
=& \, k\hat{\phi}_t \psi_{2\,tt}+2\nabla\hat{\phi}\cdot\nabla\phi_{1\,t}+2\nabla\phi_2\cdot\nabla\hat{\phi}_t\,,
\end{aligned}
\end{equation}
with homogeneous initial and boundary conditions (where $\hat{\phi}=\phi_1-\phi_1$),
we need to prove that
$\alpha_1=1-k\phi_{1\,t}$ and $f_2=k\hat{\phi}_t \psi_{2\,tt}+2\nabla\hat{\phi}\cdot\nabla\phi_{1\,t}+2\nabla\phi_2\cdot\nabla\hat{\phi}_t$ are in 
$W^{1,1}(0,T;H^1(\Omega))\cap L^\infty(0,T;W^{1,3}(\Omega)\cap L^1(0,T;H^2(\Omega)$
and $L^2(0,T;H^1(\Omega))$,  and that the derivatives of $\alpha_1$ are small, provided 
$\phi_1,\phi_2\in B_\rho^{X^K}$. Moreover, $\|f_2\|_{L^2 H^1}$ needs to be estimated by a multiple of $\|\hat{\phi}\|_{X^K}$ with a small factor.

\begin{theorem} \label{th:wellposedness_Kuz}
	Let $c^2$, $b$, $T>0$, $k\in\mathbb{R}$. Then there exist $\bar{\tau}$, $\rho>0$ , $\rho_0>0$ such that for all $(\psi_0,\psi_1,\psi_2)\in X^K_0
=H_0^1(\Omega)\cap H^3(\Omega)\times H_0^1(\Omega)\cap H^2(\Omega)\times H_0^1(\Omega)$ 
satisfying 
\begin{equation}\label{smallness_init}
\|\psi_0\|_{H^3(\Omega)}^2+\|\psi_1\|_{H^2(\Omega)}^2+\tau\|\psi_2\|_{H^1(\Omega)}^2\leq\rho_0^2\,,
\end{equation} 
 and all $\tau\in(0,\bar{\tau})$, there exists a unique solution $\psi\in X^K$ of 
	\begin{equation} \label{ibvp_Kuznetsov_MC}
	\begin{aligned}
	\begin{cases}
	\tau\psi_{ttt}+\psi_{tt}-c^2\Delta \psi - b\Delta \psi_t = \left(
\tfrac{k}{2}
(\psi_t)^2+|\nabla\psi|^2\right)_t
 \quad \mbox{ in }\Omega\times(0,T), \\[1mm]
	\psi=0 \quad \mbox{ on } \partial \Omega\times(0,T),\\[1mm]
	(\psi, \psi_t, \psi_{tt})=(\psi_0, \psi_1, \psi_2) \quad \mbox{ in }\Omega\times \{0\},
	\end{cases}
	\end{aligned}
	\end{equation}
which satisfies the estimate
\[
\tau \|z_t\|^2_{L^\infty H^1} + \|z_t\|^2_{L^2 H^1}+\|z\|^2_{L^\infty H^2}+\|\psi\|^2_{L^\infty H^3}\leq \rho^2.
\]
\end{theorem}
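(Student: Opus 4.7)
The plan is to apply the Banach fixed-point theorem to the operator $\mathcal{T}:B_\rho^{X^K}\to X^K$ defined above, for suitable $\rho$, $\rho_0$, and $\bar{\tau}$, invoking Theorem~\ref{Thm:LinearHigher} for both the self-mapping property and contractivity of $\mathcal{T}$. The set $B_\rho^{X^K}$ is closed in $X^K$ (the defining norm is lower semicontinuous), so any Banach iterate limit stays in the ball.

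\textbf{Self-mapping.} Given $\phi\in B_\rho^{X^K}$, I would first verify that $\alpha=1-k\phi_t$ and $f=2\nabla\phi\cdot\nabla\phi_t$ satisfy the hypotheses of Theorem~\ref{Thm:LinearHigher}. Since $X^K$ embeds into $W^{1,\infty}(0,T;H^2(\Omega))\cap W^{2,\infty}(0,T;H_0^1(\Omega))$ with norms $\lesssim \rho$, and using $H^2\hookrightarrow W^{1,3}\cap L^\infty$ and $H^1\hookrightarrow L^3$, one obtains $\alpha\in W^{1,1}(0,T;H^1)\cap L^\infty(0,T;W^{1,3})\cap L^1(0,T;H^2)$ with $\|\nabla\gamma\|_{L^\infty L^3}\leq C|k|\rho$, which can be made arbitrarily small by shrinking $\rho$. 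Non-degeneracy $\alpha\geq\underline{\alpha}$, $\gamma\geq\underline{\gamma}$ follows from \eqref{taubar}--\eqref{ulgamma} by choosing $\rho$ small relative to $\gamma^\ast$ and $\bar\tau<b/c^2$. The source satisfies $\|f\|_{L^2 H^1}\leq C\rho^2$ by the $H^1$ product estimate combined with $\nabla\phi\in L^\infty H^2\hookrightarrow L^\infty L^\infty$ and $\nabla\phi_t\in L^\infty H^1$. Theorem~\ref{Thm:LinearHigher} thus yields a solution $\psi\in X^K$ with
\[
\|\psi\|_{X^K}^2\leq C(\gamma,T)\bigl(\rho_0^2+C'\rho^4\bigr),
\]
so picking $\rho$ with $C(\gamma,T)C'\rho^2\leq 1/2$ and $\rho_0$ with $C(\gamma,T)\rho_0^2\leq \rho^2/2$ secures $\mathcal T(B_\rho^{X^K})\subset B_\rho^{X^K}$.

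\textbf{Contractivity.} The difference $\hat\psi=\mathcal T(\phi_1)-\mathcal T(\phi_2)$ solves \eqref{hat} with homogeneous initial and boundary data, coefficient $\alpha_1=1-k\phi_{1t}$ (satisfying the hypotheses of Theorem~\ref{Thm:LinearHigher} as above), and source
\[
f_2=k\,\hat\phi_t\,\psi_{2,tt}+2\nabla\hat\phi\cdot\nabla\phi_{1t}+2\nabla\phi_2\cdot\nabla\hat\phi_t.
\]
The essential estimate is $\|f_2\|_{L^2 H^1}\leq C\rho\,\|\hat\phi\|_{X^K}$, verified termwise via the $H^1$ product rule and the embeddings $H^2\hookrightarrow L^\infty$, $H^1\hookrightarrow L^6$. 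The critical term is $k\,\hat\phi_t\psi_{2,tt}$: here the factor $\psi_{2,tt}\in L^\infty H^1$ is available precisely because $\psi_2\in B_\rho^{X^K}$ (via the $\tau\|\psi_{tt}+\tfrac{c^2}{b}\psi_t\|_{L^\infty H_0^1}$ component, together with $z\in L^\infty H^2$), and this bound is $\tau$-independent, which is the very feature that the higher-order framework was designed to deliver. Applying Theorem~\ref{Thm:LinearHigher} with zero initial data to \eqref{hat} then yields
\[
\|\hat\psi\|_{X^K}^2\leq C(\gamma,T)\,\|f_2\|_{L^2 H^1}^2\leq C(\gamma,T)\,C^2\rho^2\,\|\hat\phi\|_{X^K}^2,
\]
and shrinking $\rho$ further makes this factor strictly less than one. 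Banach's fixed-point theorem then furnishes the unique $\psi\in B_\rho^{X^K}$ solving \eqref{ibvp_Kuznetsov_MC}.

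\textbf{Main obstacle.} The delicate point is the regularity bookkeeping for $f_2$ in $L^2(0,T;H^1(\Omega))$ \emph{uniformly in $\tau$}. In the Westervelt setting with space $X^W$, the analogous source contains $\psi_{2,tt}\in L^2 H^1$ only with an $O(\tau^{-1/2})$ bound, which is why only Schauder's theorem was used in Section~\ref{sec:wellposed_Wes}. Here the $\tau$-uniform $L^\infty H^1$ bound on $\psi_{tt}+\tfrac{c^2}{b}\psi_t$ encoded in the norm on $X^K$, combined with the higher spatial regularity $\psi\in L^\infty H^3$, is what ultimately renders the Banach argument admissible at the price of stronger assumptions on initial data.
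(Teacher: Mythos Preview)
Your overall strategy matches the paper's exactly: Banach fixed-point on $B_\rho^{X^K}$, using Theorem~\ref{Thm:LinearHigher} for both the self-mapping and the contraction step, with $\rho$ chosen small to guarantee non-degeneracy and smallness of $\|\nabla\gamma\|_{L^\infty L^3}$.

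There is, however, a genuine error in your contractivity bookkeeping that undermines the claimed $\tau$-independence. You assert that $\psi_{2,tt}\in L^\infty(0,T;H^1)$ with a $\tau$-independent bound, ``via the $\tau\|\psi_{tt}+\tfrac{c^2}{b}\psi_t\|_{L^\infty H_0^1}$ component''. But that component only yields $\|z_{2,t}\|_{L^\infty H^1}\leq \rho/\sqrt{\tau}$, which blows up as $\tau\to 0$; there is no $\tau$-uniform $L^\infty H^1$ bound on $z_t$ encoded in $\|\cdot\|_{X^K}$. The $\tau$-uniform control actually comes from the \emph{unweighted} term $\|z_t\|_{L^2 H^1}^2\leq\rho^2$. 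Since $\hat\phi_t\in L^\infty(0,T;H^2)$ uniformly in $\tau$ (from $\|z\|_{L^\infty H^2}$ and $\|\psi\|_{L^\infty H^3}$), one pairs $L^\infty$-in-time for $\hat\phi_t$ with $L^2$-in-time for $\psi_{2,tt}=z_{2,t}-\tfrac{c^2}{b}z_2+\tfrac{c^4}{b^2}\psi_2$ to get
\[
\|k\,\hat\phi_t\,\psi_{2,tt}\|_{L^2 H^1}\lesssim \|\hat\phi_t\|_{L^\infty H^2}\,\|\psi_{2,tt}\|_{L^2 H^1}\lesssim \rho\,\|\hat\phi\|_{X^K},
\]
which is exactly how the paper handles this term. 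Your ``Main obstacle'' paragraph repeats the same misattribution. With this correction the argument goes through as you outline.
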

\begin{proof}
We first prove that $\mathcal{T}$ is a self-mapping on $B_\rho^{X^K}$.
For this purpose, we estimate $\alpha$ and $f$, assuming that $\phi\in B_\rho^{X^K}$ and abbreviating $w=\phi_t+\frac{c^2}{b}\phi$:
\begin{equation}\label{estalpha}
\begin{aligned} 
\|\nabla\alpha_t\|_{L^1 L^2}
&=|k|\,\|\nabla w_t-\tfrac{c^2}{b}\nabla w+\tfrac{c^4}{b^2}\nabla\phi\|_{L^1 L^2}\\
&\leq |k|\, \sqrt{T}(1+\tfrac{c^2}{b}\sqrt{T}+\tfrac{c^4}{b^2}) \rho,\\
\|\nabla\alpha\|_{L^\infty L^3}
&=|k|\,\|\nabla w-\tfrac{c^2}{b}\nabla \phi\|_{L^\infty L^3}\\
&\leq |k|\, C_{H^2\to W^{1,3}}^\Omega (1+\tfrac{c^2}{b}) \rho,\\
\|-\Delta\alpha\|_{L^1 L^2}
&=|k|\,\|-\Delta w+\tfrac{c^2}{b}\Delta \phi\|_{L^1 L^2}\\
&\leq |k|\, \sqrt{T}(1+\tfrac{c^2}{b}) \rho,\\
\|\gamma-\gamma^*\|_{L^\infty L^\infty}
&=\|\alpha-1\|_{L^\infty L^\infty}
=|k|\,\|w-\tfrac{c^2}{b}\phi\|_{L^\infty L^\infty}\\
&\leq |k|\, C_{H^2\to L^\infty}^\Omega (1+\tfrac{c^2}{b}) \rho.
\end{aligned}
\end{equation}
Moreover, we find that  
\[
\begin{aligned} 
\|\nabla f\|_{L^2 L^2}
=& \, 2 \left\|\nabla^2\phi \nabla\phi_t+\nabla^2\phi_t \nabla\phi \right\|_{L^2 L^2}\\[1mm]
\leq& \, 2 \begin{multlined}[t] \Bigl(\|\nabla^2\phi\|_{L^\infty L^6} 
\|\nabla w-\tfrac{c^2}{b}\nabla\phi \|_{L^2 L^3}\\
+\|\nabla^2 w-\tfrac{c^2}{b}\nabla^2\phi \|_{L^2 L^2} \|\nabla\phi\|_{L^\infty L^\infty}\Bigr) \end{multlined} \\
\leq& \, 2(C_{H^3\to W^{2,6}}^\Omega C_{H^2, W^{1,3}}^\Omega +C_{H^3, W^{1,\infty}}^\Omega \left(1+\tfrac{c^2}{b} \right) \rho^2 =:C_f \rho^2
\,.
\end{aligned}
\]
Therefore, energy estimate \eqref{reg_est_Kuz} yields
\[
\|\psi\|_{X^K}^2\leq C(\gamma,T)\Bigl(C_f^2\rho^4+\|\psi_0\|_{H^3(\Omega)}^2+\|\psi_1\|_{H^2(\Omega)}^2+\tau\|\psi_2\|_{H^1(\Omega)}^2\Bigr)
\leq\rho^2\,,
\]
provided that the initial data are small in the sense of \eqref{smallness_init} with 
\begin{equation}\label{rhorho0}
\begin{aligned}
&\rho_0<\frac{1}{4C(\gamma,T) C_f}\,, \\
&\rho\leq\min\left\{\frac{1+\sqrt{1-4C(\gamma,T)C_f\rho_0}}{2C(\gamma,T)C_f},\,\frac{m}{kC_{H^2\to L^\infty}^\Omega(1+c^2/b)}\right\}\,,
\end{aligned}
\end{equation}
which implies that $\mathcal{T}$ maps $B_\rho^{X^K}$ into itself. 

For proving contractivity of $\mathcal{T}$, we estimate $\alpha_1$ analogously to \eqref{estalpha},
and with abbreviations $\hat{w}:=\hat{\phi}_t+\frac{c^2}{b}\hat{\phi}$, $w_i:=\phi_{i\,t}+\frac{c^2}{b}\phi_i$, $z_i:=\psi_{i\,t}+\frac{c^2}{b}\psi_i$, $i\in\{1,2\}$. We have
\[
\begin{aligned} 
&\|\nabla f_2\|_{L^2 L^2} \\
\leq& \, |k|\Bigl(\|\nabla \hat{w}-\tfrac{c^2}{b}\nabla \hat{\phi}\|_{L^\infty L^3}  \|z_{2\,t}-\tfrac{c^2}{b}z_2+\tfrac{c^4}{b^2}\psi_2 \|_{L^2 L^6} \\
& + \|\hat{w}-\frac{c^2}{b}\hat{\phi}\|_{L^\infty L^\infty}  \|\nabla z_{2\,t}-\tfrac{c^2}{b}\nabla z_2+\tfrac{c^4}{b^2}\nabla\psi_2\|_{L^2 L^2} \Bigr)
\\
&+ 2  \Bigl(\|\nabla^2\hat{\phi}\|_{L^\infty L^6} 
\|\nabla w_1-\tfrac{c^2}{b}\nabla\phi_1\|_{L^2 L^3}
+\|\nabla^2 \hat{w}-\tfrac{c^2}{b}\nabla^2\hat{\phi}\|_{L^2 L^2} 
\|\nabla\phi_1\|_{L^\infty L^\infty}\Bigr)\\
&+ 2  \Bigl(\|\nabla^2 \phi_2\|_{L^\infty L^6} 
\|\nabla \hat{w}-\tfrac{c^2}{b}\nabla\hat{\phi}\|_{L^2 L^3}
+\|\nabla^2 w_2-\tfrac{c^2}{b}\nabla^2\phi_2\|_{L^2 L^2} 
\|\nabla\hat{\phi}\|_{L^\infty L^\infty}\Bigr).
\end{aligned}
\] 
From here it folllows that
\[
\begin{aligned} 
&\|\nabla f_2\|_{L^2 L^2} \\
\leq& \,\begin{multlined}[t] \left (|k|\|\psi\|_{X^K}+2\|\phi_1\|_{X^K}+2\|\phi_2\|_{X^K}\right)\left(C_{H^2\to W^{1,3}}^\Omega C_{H^1\to L^6}^\Omega + C_{H^2\to L^\infty}^\Omega \right)\\
 \times \left(1+\tfrac{c^2}{b}+\tfrac{c^4}{b^2}\right) \|\hat{\phi}\|_{X^K} \end{multlined}\\
\leq& \, \hat{C}_f \rho\|\hat{\phi}\|_{X^K}\,.
\end{aligned}
\] 
By applying estimate \eqref{reg_est_Kuz} to equation \eqref{hat} with homogeneous initial conditions, we obtain
\[
\|\hat{\psi}\|_{X^K}\leq \sqrt{C(\gamma,T)}\hat{C}_f\rho \, \|\hat{\phi}\|_{X^K}\,,
\]
which after possibly decreasing $\rho$ yields contractivity.

Since $B_\rho^{X^K}$ is closed, we can make use of Banach's contraction principle to conclude existence and uniqueness of a solution.
\end{proof}

\begin{remark}[On global well-posedness]
Note that $C(\gamma,T)$ in \eqref{reg_est_Kuz} depends on the final time due to the use of Gronwall's inequality. We do not expect that global in time well-posedness can be proven in the nonlinear case due to the fact that we must deal with a quadratic nonlinearity and only have weak damping in the equation for $z$.
\end{remark}

\section{Singular limit for vanishing relaxation time}\label{sec:limits}
We next focus on proving a limiting result for equations \eqref{KuznetsovMC} and \eqref{WesterveltMC} as $\tau\to0$.
Recall that $b=\delta+\tau c^2$ and that the norms on the spaces $X^W$ and $X^K$, defined in \eqref{XWXKspaces}, depend on $\tau$, 
whereas the radius $\rho$ of the balls \eqref{Brho_W} and \eqref{Brho_K} is independent of $\tau$. \\
\indent As already stated in the notational preliminaries \eqref{XWXKnorms}, we denote by $\|\cdot\|_{\bar{X}^W}$ and $\|\cdot\|_{\bar{X}^K}$ the respective $\tau$-independent part of the norms defined in \eqref{Brho_W} and \eqref{Brho_K}, 
\begin{equation*}
\begin{aligned}
&\|\psi\|_{\bar{X}^W}^2:=\|\psi_{tt}\|^2_{L^2 H^1}+\|\psi\|^2_{W^{1,\infty} H^2}\,, \\
&\|\psi\|_{\bar{X}^K}^2:=\|\psi_{tt}+\tfrac{c^2}{b}\psi_t\|^2_{L^2 H^1}+\|\psi_t+\tfrac{c^2}{b}\psi\|^2_{L^\infty H^2}+\|\psi\|^2_{L^\infty H^3}.
\end{aligned}
\end{equation*}
Moreover, we recall the spaces for the initial data  
\[
\begin{aligned}
&X_0^W:=H_0^1(\Omega)\cap H^2(\Omega)\times H_0^1(\Omega)\cap H^2(\Omega) \times H_0^1(\Omega),\\
&X_0^K:=H_0^1(\Omega)\cap H^3(\Omega)\times H_0^1(\Omega)\cap H^2(\Omega) \times H_0^1(\Omega),
\end{aligned}
\]
the only difference being in the regularity of $\psi^0$. Therewith, we can formulate a limiting result for \eqref{KuznetsovMC} and \eqref{WesterveltMC}.

\begin{theorem} \label{th:limits}
	Let $c^2$, $b$, $T>0$, and $k \in \mathbb{R}$. Then there exist $\bar{\tau}$, $\rho_0>0$ such that for all $(\psi_0,\psi_1,\psi_2)\in X_0^W$, the family $(\psi^\tau)_{\tau\in(0,\bar{\tau})}$ of solutions to \eqref{ibvp_Westervelt_MC} 
	according to Theorem \ref{th:wellposedness_Wes} 
	converges weakly* in $\bar{X}^W$ to a solution $\bar{\psi}\in \bar{X}^W$ of \eqref{Westervelt} with homogeneous Dirichlet boundary conditions \eqref{Dirichlet} and initial conditions $\bar{\psi}(0)=\psi_0$, $\bar{\psi}_t(0)=\psi_1$.
	
	The statement remains valid with the equations \eqref{ibvp_Westervelt_MC}, \eqref{Westervelt}, the spaces $X_0^W$, $\bar{X}^W$ and Theorem \ref{th:wellposedness_Wes} replaced by the equations \eqref{ibvp_Kuznetsov_MC}, \eqref{Kuznetsov}, the spaces $X_0^K$, $\bar{X}^K$ and Theorem \ref{th:wellposedness_Kuz}, respectively.
\end{theorem}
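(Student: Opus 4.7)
The strategy is to exploit the uniform (in $\tau$) bounds established in Theorems \ref{th:wellposedness_Wes} and \ref{th:wellposedness_Kuz} to extract a weak$^\star$ convergent subsequence in $\bar{X}^W$ (resp.\ $\bar{X}^K$), supplement this with strong compactness via Aubin--Lions, and then pass to the limit in the weak formulation of the equation. The decisive mechanism that eliminates the third-order term in the limit is the identity
\[
\int_0^T\!\!\int_\Omega\tau\psi_{ttt}^\tau\,\phi\,dx\,dt
= \bigl[\tau\psi_{tt}^\tau\phi\bigr]_0^T
- \int_0^T\!\!\int_\Omega\tau\psi_{tt}^\tau\,\phi_t\,dx\,dt,
\]
combined with the observation that $\|\tau\psi_{tt}^\tau\|_{L^\infty H^1}\le\sqrt{\tau}\,\rho\to 0$ and $\tau\psi_{tt}^\tau(0)=\tau\psi_2\to 0$. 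Thus the bounded-but-not-small $\tau$-weighted norms are indeed enough to kill the $\tau\psi_{ttt}^\tau$ contribution after a single integration by parts.

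Concretely, thanks to the uniform $\bar{X}^W$-bound, standard weak$^\star$ compactness yields a subsequence (not relabeled) with $\psi^\tau\rightharpoonup^\star\bar\psi$ in $\bar{X}^W$, and analogously in the Kuznetsov case. Aubin--Lions then upgrades convergence: the Westervelt bounds $\psi^\tau\in W^{1,\infty}(0,T;H_0^1\cap H^2)$ and $\psi_{tt}^\tau\in L^2(0,T;H^1)$ yield $\psi_t^\tau\to\bar{\psi}_t$ strongly in $C([0,T];H^{2-\varepsilon}(\Omega))$, and via $H^{2-\varepsilon}\hookrightarrow L^\infty$ in $d\le 3$ also in $C([0,T];L^\infty(\Omega))$. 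In the Kuznetsov case the $\bar{X}^K$-bound gives in addition $\psi^\tau\to\bar{\psi}$ strongly in $C([0,T];W^{1,\infty}(\Omega))$ and $\nabla\psi_t^\tau$ convergent weakly in $L^2(0,T;L^2(\Omega))$ with enough strong convergence in a weaker space to identify products.

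To identify the limit equation, I would test \eqref{ibvp_Westervelt_MC} (or \eqref{ibvp_Kuznetsov_MC}) against $\phi\in C_c^\infty([0,T)\times\Omega)$, integrate by parts the $\tau\psi_{ttt}^\tau$ term as above, and pass to the limit in each remaining term: the linear terms $c^2\Delta\psi^\tau$ and $b\Delta\psi_t^\tau$ converge weakly to $c^2\Delta\bar{\psi}$ and $\delta\Delta\bar{\psi}_t$ respectively, using $b=\delta+\tau c^2\to\delta$. The nonlinear term $k\psi_t^\tau\psi_{tt}^\tau$ converges to $k\bar{\psi}_t\bar{\psi}_{tt}$ in the sense of distributions by combining strong convergence of $\psi_t^\tau$ in $L^\infty L^\infty$ with weak convergence of $\psi_{tt}^\tau$ in $L^2 L^2$; the additional term $2\nabla\psi^\tau\cdot\nabla\psi_t^\tau$ in the Kuznetsov case is handled via the stronger $\bar{X}^K$ compactness. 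Initial conditions $\bar{\psi}(0)=\psi_0$ and $\bar{\psi}_t(0)=\psi_1$ follow from the continuity embeddings of $\bar{X}^W$ (resp.\ $\bar{X}^K$) together with the uniform initial data of the approximating family; the condition on $\psi_{tt}(0)$ is naturally lost, matching the reduction in order of the limit equation.

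The hardest step is ensuring enough strong compactness to pass to the limit inside the nonlinearities, particularly in the Kuznetsov case, where controlling $\nabla\psi^\tau\cdot\nabla\psi_t^\tau$ requires the $\tau$-independent $L^\infty H^3$ bound on $\psi$ delivered by Theorem \ref{Thm:LinearHigher}; this is exactly the reason higher-order estimates were developed. Finally, by the known uniqueness of solutions of \eqref{Westervelt} and \eqref{Kuznetsov} within the limit regularity class (cf.\ Refs.~\refcite{KL09Westervelt,KL12_Kuznetsov} and the references therein), the limit $\bar{\psi}$ is independent of the chosen subsequence, and a standard subsequence--subsequence argument upgrades the convergence to the whole family $(\psi^\tau)_{\tau\in(0,\bar{\tau})}$.
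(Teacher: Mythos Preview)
Your proposal is correct and follows essentially the same strategy as the paper: uniform $\tau$-independent energy bounds, weak$^\star$ compactness in $\bar{X}^W$ (resp.\ $\bar{X}^K$), Aubin--Lions for strong compactness, integration by parts in time to kill the $\tau\psi_{ttt}^\tau$ contribution, passage to the limit in each term of the weak formulation, and a subsequence--subsequence argument based on uniqueness of the limiting Westervelt/Kuznetsov problem. The only cosmetic difference is that the paper integrates the nonlinearity $(\mathcal{N}(\psi))_t$ by parts in time (reducing to strong convergence of $\mathcal{N}(\psi^\tau)$ in $C^1(0,T;L^4)\cap C(0,T;W^{1,4})$), whereas you keep the product form $k\psi_t^\tau\psi_{tt}^\tau$ and use strong$\times$weak convergence; both are valid under the available bounds.
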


\begin{proof}
	From the energy estimates in Theorems \ref{th:wellposedness_Wes} (or \ref{th:wellposedness_Kuz}), we have uniform boundedness of $(\psi^\tau)_{\tau\in(0,\bar{\tau})}$ in $\bar{X}^W$ (or in $\bar{X}^K$) and therefore existence of a weakly* $\bar{X}^W$ (or $\bar{X}^K$) convergent sequence $(\psi^\ell)_{\ell\in\mathbb{N}}$ with $\tau_\ell\searrow0$. By compactness of embeddings, this sequence also converges strongly in  $C^1(0,T; L^4(\Omega))\cap C(0,T; W^{1,4}(\Omega))$. Its limit $\bar{\psi}$ therefore lies in $\bar{X}^W$ (or in $\bar{X}^K$) and satisfies the initial conditions $\bar{\psi}(0)=\psi_0$, $\bar{\psi}_t(0)=\psi_1$.    
	
	To prove that $\bar{\psi}$ also satisfies the respective PDEs, we test with arbitrary functions $v \in C_0^\infty(0,T;C_0^\infty(\Omega))$ and invoke the Fundamental Lemma of Calculus of Variations.
	To this end, we introduce an abbreviation for the nonlinear term in the respective equations, namely
	\[
	\begin{aligned}
	\mathcal{N}(\psi)=& \, \begin{cases} 
	\frac{\beta_a}{c^2}(\psi_t)^2 \mbox{ for \eqref{Westervelt}, \eqref{WesterveltMC}}\\
	\frac{1}{c^2}\frac{B}{2A}(\psi_t)^2+|\nabla\psi|^2 \mbox{ for \eqref{Kuznetsov}, \eqref{KuznetsovMC},}
	\end{cases}\\
	=&\, k (\psi_t)^2 + \sigma |\nabla\psi|^2 \ \mbox{ with } \sigma = \begin{cases} 
	0 \mbox{ for \eqref{Westervelt}, \eqref{WesterveltMC}}\\
	1 \mbox{ for \eqref{Kuznetsov}, \eqref{KuznetsovMC}}.
	\end{cases}
	\end{aligned}
	\]
	\textcolor{mygreen}{Now} the $\tau$-dependent and the limiting equation can be rewritten in both Westervelt and Kuznetsov cases as 
	\begin{equation}\label{tauPDE}
	\tau\psi_{ttt}+\psi_{tt}-c^2\Delta \psi - (\delta+\tau c^2)\Delta \psi_t- (\mathcal{N}(\psi))_t=0
	\end{equation}
	and 
	\begin{equation}\label{limitPDE}
	\psi_{tt}-c^2\Delta \psi - \delta\Delta \psi_t- (\mathcal{N}(\psi))_t=0,
	\end{equation}
	respectively. \\
	\indent Note that by the regularity inherent in the spaces $X^W$, $X^K$ and $\bar{X}^W$, $\bar{X}^K$, 
cf. \eqref{XWXKspaces}, $\psi^\tau$ satisfies equation \eqref{tauPDE} in $L^2(0;T;L^2(\Omega))$. Inserting $\bar{\psi}$ into the left-hand side of \eqref{limitPDE} yields an $L^2(0;T;L^2(\Omega))$ function.
	Therewith, we get, for $\hat{\psi}_\ell:=\bar{\psi}-\psi^\ell$ and any $v\in C_0^\infty(0,T;C_0^\infty(\Omega))$ that
	\[
	\begin{aligned}
	&\int_0^T\int_\Omega \Bigl(\bar{\psi}_{tt}-c^2\Delta \bar{\psi} - \delta\Delta \bar{\psi}_t- \mathcal{N}(\bar{\psi})_t\Bigr)\, v \, \textup{d}x\, \textup{d}t\\
	=& \,\int_0^T\int_\Omega \Bigl(\hat{\psi}_{\ell\,tt}-c^2\Delta \hat{\psi}_\ell - \delta\Delta \hat{\psi}_{\ell\,t}- (\mathcal{N}(\bar{\psi})_t-\mathcal{N}(\psi^\ell))_t
	-\tau_\ell\psi^\ell_{ttt} -\tau_\ell c^2\Delta \psi_{\ell\,t}\Bigr)\, v \, \textup{d}x\, \textup{d}t\\
	=& \, \mbox{I} - \mbox{II} - \mbox{III}\,.
	\end{aligned}
	\]
	Above, we have that
	\[
	\begin{aligned}
	\mbox{I}=
	\int_0^T\int_\Omega \Bigl(\hat{\psi}_{\ell\,tt}-c^2\Delta \hat{\psi}_\ell - \delta\Delta \hat{\psi}_{\ell\,t}\Bigr)\, v \, \textup{d}x\, \textup{d}t \ \to 0 \ \mbox{ as }\ \ell\to\infty
	\end{aligned}
	\]
	due to the weak* convergence to zero of $\hat{\psi}_\ell$ in $\bar{X}^W$ (or $\bar{X}^K$). Moreover,
	\[
	\begin{aligned}
	\mbox{II}=& \,
	\int_0^T\int_\Omega \Bigl((\mathcal{N}(\bar{\psi})-\mathcal{N}(\psi^\ell))\Bigr)_t\, v \, \textup{d}x\, \textup{d}t \\
	=&\, -\int_0^T\int_\Omega \Bigl((\mathcal{N}(\bar{\psi})-\mathcal{N}(\psi^\ell))\Bigr)\, v_t \, \textup{d}x\, \textup{d}t\\
	=&\, -\int_0^T\int_\Omega \Bigl(k(\bar{\psi}_t+\psi^\ell_t)\hat{\psi}_{\ell\,t}+\sigma (\nabla\bar{\psi}+\nabla\psi^\ell)\cdot\nabla\hat{\psi}_\ell\Bigr)\, v_t \, \textup{d}x\, \textup{d}t \ \to 0 \ \mbox{ as } \ \ell\to\infty
	\end{aligned}
	\]
	due to the boundedness of $(\psi_\ell)_{\ell\in\mathbb{N}}$ in $\bar{X}^W$ (or $\bar{X}^K$) by $\rho$, and the strong convergence to zero of $\hat{\psi}_\ell$ in  $C^1(0,T; L^4(\Omega))\cap C(0,T; W^{1,4}(\Omega))$. Finally,
	\[
	\begin{aligned}
	\mbox{III}=& \,
	\tau_\ell \, \int_0^T\int_\Omega \Bigl(\psi^\ell_{ttt}+c^2\Delta \psi_{\ell\,t}\Bigr)\, v \, \textup{d}x\, \textup{d}t\\
	=&\, \tau_\ell\int_0^T\int_\Omega \Bigl(\psi^\ell_{tt}+c^2\Delta \psi_\ell\Bigr)\, v_t \, \textup{d}x\, \textup{d}t \ \to 0 \ \mbox{ as } \ \ell\to\infty
	\end{aligned}
	\]
	due to the boundedness of $(\psi_\ell)_{\ell\in\mathbb{N}}$ in $\bar{X}^W$ (or $\bar{X}^K$), and $\tau_\ell\to0$.
	
	A subsequence-subsequence argument, together with uniqueness of the solution to \eqref{limitPDE} according to results in, e.g., Refs.~\refcite{KL09Westervelt,KL12_Kuznetsov,MW11,MW13} yields convergence of the whole family $(\psi^\tau)_{\tau\in(0,\bar{\tau})}$.
\end{proof}

\begin{remark}[On compatibility conditions]
	Note that, in contrast to Ref.~\refcite{KT18_ModelsNlAcoustics}, no compatibility condition on $\psi_{2}$ is needed, since no continuity of the limit $\bar{\psi}_{tt}$ with respect to time arises and in the used energy estimates the $\psi_2$ term vanishes as $\tau\to0$.
\end{remark}
\begin{remark}[On strong convergence]
We could look directly at the equation solved by the difference $ \hat{\psi} = \psi^\tau-\bar{\psi}$ of solutions to the JMGT and the Westervelt equation:
	\begin{equation}
	\begin{aligned}
	\hat{\psi}_{tt}-c^2\Delta\hat{\psi}-\delta\Delta\hat{\psi}_t-k((\psi^\tau+\bar{\psi})\hat{\psi})_t
	= -\tau \psi^\tau_{ttt}-\tau c^2\Delta\psi^\tau_t.
	\end{aligned}
	\end{equation}
	However, showing that $\hat{\psi}$ tends to zero as $\tau \rightarrow 0$ appears to be beyond our theoretical reach, although suggested by the numerical results in Section~\ref{sec:NumEx}. A particular challenge is to estimate the first term  $\tau \psi^\tau_{ttt}$: we only know this it is bounded by $\rho$ in $L^2(0,T; L^2(\Omega))$ according to estimate \eqref{est_Westervelt}, but not that it tends to zero. An analogous argument can be made for the Kuznetsov-type JMGT equation. 
\end{remark}
\subsection{Comparison to the regularity results in the literature}
We note that Theorem~\ref{th:limits} also contains a regularity result on the solutions $\bar{\psi}\in \bar{X}^W$ and $\bar{\psi}\in \bar{X}^K$ of the Westervelt \eqref{Westervelt} and the Kuznetsov \eqref{Kuznetsov} equations with homogeneous Dirichlet boundary conditions \eqref{Dirichlet} and initial conditions $\bar{\psi}(0)=\psi_0\in H_0^1(\Omega)\cap H^2(\Omega)$ or $H^3(\Omega)$, $\bar{\psi}_t(0)=\psi_1\in H_0^1(\Omega)$.
	
	By comparing this regularity with the regularity results on the Westervelt equation from Refs.~\refcite{KL09Westervelt,MW11} and with those for the Kuznetsov equation from Ref.~\refcite{KL12_Kuznetsov,MW13,MizohataUkai}, 
and noting that in Refs.~\refcite{KL09Westervelt,MW11,KL12_Kuznetsov,MW13},
$u$ is the acoustic pressure, i.e., related to $\psi$ by $u=\varrho_0\psi_t$,
we get
	\begin{itemize}
		\item Westervelt equation:
		\begin{itemize}
			\item 
			Ref.~\refcite{KL09Westervelt}:
			$(u_0,u_1)\in (H_0^1(\Omega)\cap H^2(\Omega))\times H_0^1(\Omega)$ 
\\[1mm] 
and additionally $(1-ku_0)^{-1}[ c^2 \Delta u_0 + b \Delta u_1 + k u_1^2]\in L^2(\Omega)$\\[1mm]  $\Rightarrow$ 
			$u\in C^2(0,T;L^2(\Omega))\cap H^2(0,T;H_0^1(\Omega))\cap C(0,T;H^2(\Omega))$;\\
			\item 
			Ref.~\refcite{MW11}:
			$(u_0,u_1)\in (H_0^1(\Omega)\cap H^2(\Omega))\times H_0^1(\Omega)$\\[1mm] $\Rightarrow$ 
			$u\in H^2(0,T;L^2(\Omega))\cap H^1(0,T;H_0^1(\Omega)\cap H^2(\Omega))$;\\
			\item here:
			$(\psi_0,\psi_1)\in \left(H_0^1(\Omega)\cap H^2(\Omega)\right)^2$ 
\\[1mm]  and additionally $(1-k\psi_0)^{-1}[ c^2 \Delta \psi_0 + b \Delta \psi_1 ]\in H_0^1(\Omega)$\\[1mm] $\Rightarrow$
			$u\in H^2(0,T;H_0^1(\Omega))\cap W^{1,\infty}(0,T;H^2(\Omega))$.\\
		\end{itemize}
		\item Kuznetsov equation:
		\begin{itemize}
			\item 
			Ref.~\refcite{MizohataUkai}: 
			$(\psi_0,\psi_1)\in H_0^1(\Omega)\cap H^3(\Omega)\times H_0^1(\Omega)\cap H^2(\Omega)$\\[1mm] $\Rightarrow$ 
			$\psi\in C^1(0,T;H_0^1(\Omega)\cap H^2(\Omega))\cap H^1(0,T;H^3(\Omega))$;\\
			\item 
			Ref.~\refcite{KL12_Kuznetsov}:
			$(u_0,u_1)\in (H_0^1(\Omega)\cap H^2(\Omega))\times H_0^1(\Omega)$ 
\\[1mm] and additionally $(1-ku_0)^{-1}[ c^2 \Delta u_0 + b \Delta u_1 + k u_1^2 + 2|\nabla u_0|^2]\in L^2(\Omega)$\\[1mm] $\Rightarrow$ 
			$u\in C^2(0,T;L^2(\Omega))\cap H^2(0,T;H_0^1(\Omega))\cap C(0,T;H^2(\Omega))$;\\
			\item 
			Ref.~\refcite{MW13}:
			$(u_0,u_1)\in (H_0^1(\Omega)\cap H^2(\Omega))\times H_0^1(\Omega)$\\[1mm] $\Rightarrow$ 
			$u\in H^{5/2}(0,T;H_0^1(\Omega))\cap H^2(0,T;H^2(\Omega))$;\\
			\item here:
			$(\psi_0,\psi_1)\in H_0^1(\Omega)\cap H^3(\Omega)\times H_0^1(\Omega)\cap H^2(\Omega)$ 
\\[1mm] and additionally $(1-k\psi_1)^{-1}[ c^2 \Delta \psi_0 + b \Delta \psi_1 + \nabla\psi_0\cdot\nabla\psi_1]\in H_0^1(\Omega)$\\[1mm] $\Rightarrow$
			$u\in H^2(0,T;H_0^1(\Omega))\cap W^{1,\infty}(0,T;H^2(\Omega))\cap L^\infty(0,T;H^3(\Omega))$.
		\end{itemize}
	\end{itemize}
	We point out that these works also contain results on global in time existence and exponential decay of solutions, as well as, in case of Refs.~\refcite{DekkersRozanova,MizohataUkai}, on the Cauchy problem, and, in case of Refs.~\refcite{MW11,MW13}, in general, non-Hilbert $L^p(\Omega)$ and $W^{s,p}(\Omega)$ spaces.  
Moreover, we wish to point to Ref.~\refcite{DoerflerGernerSchnaubelt16}, where local in time well-posedness of a class of quasilinear wave equations without strong damping was shown, which also comprises the Westervelt equation with $\delta=0$. It yields the regularity $u\in C^2(0,T;H^1(\Omega))\cap C^1(0,T;H^2(\Omega)) \cap C(0,T;H^3(\Omega))$ for initial data $(u_0,u_1)\in H_0^1(\Omega)\cap H^2(\Omega)\times H_0^1(\Omega)\cap H^3(\Omega)$ with $\Delta u_1\vert_{\partial\Omega}=0$.

\section{Numerical results} \label{sec:NumEx}
\indent As an illustration of our theoretical findings, we solve and compare numerically equations \eqref{Westervelt} and \eqref{WesterveltMC} in a one-dimensional channel geometry. For the medium, we choose water with parameters 
$$c=1500 \, \textup{m}/\textup{s}, \ \delta=6\cdot 10^{-9} \, \textup{m}^2/\textup{s}, \ \rho = 1000 \, \textup{kg}/\textup{m}^3, \ B/A=5;$$  cf. Chapter 5 in Ref.~\refcite{kaltenbacher2007numerical}. Recall that $b=\delta+\tau c^2$; the choice of the relaxation parameter $\tau$ is given below. Discretization in space is performed by employing B-splines as basis functions within the framework of Isogeometric Analysis (IGA); see~Refs.~\refcite{Cottrell,HughesBook}. For a detailed insight into the application of Isogeometric Analysis in nonlinear acoustics, we refer to Refs.~\refcite{Fritz,2017isogeometricPaper}. We use quadratic basis functions with the maximum $C^1$ global regularity and have $251$ degrees of freedom for the channel length $l=0.2 \,$m. The nonlinearities are resolved by a fixed-point iteration with the tolerance set to $\textup{TOL}=10^{-8}$. \\
\indent After discretizing in space, we end up with a semi-discrete matrix equation and proceed with a time-stepping scheme. For the Westervelt equation \eqref{WesterveltMC}, we employ the standard Newmark relations\cite{Newmark} for second-order equations:
\begin{equation} \label{Newmark_West}
\begin{aligned}
\underline{\psi}^{n+1}=& \, \underline{\psi}^n+\Delta t \dot{\underline{\psi}}^n+\frac{(\Delta t)^2}{2!} \left((1-2\beta)\ddot{\underline{\psi}}^n+2\beta \, \ddot{\underline{\psi}}^{n+1} \right),\\
\dot{\underline{\psi}}^{n+1}=& \, \dot{\underline{\psi}}^{n}+\Delta t\, \left((1-\gamma)\ddot{\underline{\psi}}^n+\gamma \, \ddot{\underline{\psi}}^{n+1} \right),
\end{aligned}
\end{equation}
realized through a predictor-corrector scheme analogously to Algorithm 1 in Ref.~\ \refcite{2017isogeometricPaper}. In \eqref{Newmark_West}, $\Delta t$ denotes the time step size. The vectors $\underline{\psi}^n$, $\underline{\dot{\psi}}^n$, and $\underline{\ddot{\psi}}^n$ denote the discrete acoustic potential, its first time derivative, and its second time derivative, respectively, at the time step $n$.\\
\indent For the Jordan--Moore--Gibson--Thompson equation with Westervelt-type nonlinearity \eqref{WesterveltMC}, we use an extension of the Newmark relations to third-order models similar to the one employed in Appendix B.2 of Ref.~\refcite{fathi2015time}:
\begin{equation} \label{Newmark_JMGT}
\begin{aligned}
\underline{\psi}^{n+1}=& \, \psi^n+\Delta t \dot{\underline{\psi}}^n+\frac{(\Delta t)^2}{2!}\ddot{\underline{\psi}}^n+\frac{(\Delta t)^3}{3!} \left((1-6\beta)\dddot{\underline{\psi}}^n+6\beta \, \dddot{\underline{\psi}}^{n+1} \right),\\
\dot{\underline{\psi}}^{n+1}=& \, \dot{\underline{\psi}}^n+\Delta t \ddot{\underline{\psi}}^n +\frac{(\Delta t)^2}{2!} \left((1-2\gamma)\dddot{\underline{\psi}}^n+2\gamma \, \dddot{\underline{\psi}}^{n+1} \right),\\
\ddot{\underline{\psi}}^{n+1}=&\, \ddot{\underline{\psi}}^{n}+\Delta t\, \left((1-\eta)\dddot{\underline{\psi}}^n+\eta \, \dddot{\underline{\psi}}^{n+1} \right). 
\end{aligned}
\end{equation}
 The average acceleration scheme corresponds to taking the Newmark parameters $(\beta, \gamma)=(1/4, 1/2)$ in \eqref{Newmark_West} for the Westervelt equation and $(\beta, \gamma, \eta)=(1/12, 1/4, 1/2)$ in \eqref{Newmark_JMGT} for the Jordan--Moore--Gibson--Thompson equation \eqref{WesterveltMC}, which is what we use in all the experiments. \\
\indent We set the initial conditions to
\begin{equation} \label{num_initial_data}
\begin{aligned}
    \left(\psi_0, \psi_1, \psi_2 \right)= \left (0, \, \mathcal{A}\, \text{exp}\left (-\frac{(x-0.1)^2}{2\sigma^2} \right),\, 0 \right),
    \end{aligned}
\end{equation}
with $\mathcal{A}=8\cdot 10^{4} \, \textup{m}^2/\textup{s}^2$ and $\sigma=0.01$,
meaning that we normalize potential (which is determined by $\vec{v}=-\nabla\psi$ only up to a constant) such that it vanishes at $t=0$,  drive the system by an inital pressure (based on the idetity $\rho \psi_t= p$) concentrated at $x=0.1$, and assume vanishing initial acceleration. Discretization in time is performed with $800$ time steps for the final time $\textup{T}=45 \, \mu$s. The spatial and temporal refinement always remain the same for both equations and different values of the relaxation time $\tau$. All the numerical results are obtained with the help of the GeoPDEs package in MATLAB\cite{Vazquez}.\\ 
\indent Figure~\ref{fig:snapshots} displays on the left side snapshots of the pressure wave $u=\varrho \psi_t$ obtained by employing equation \eqref{WesterveltMC} with the relaxation time set to $\tau=0.1 \,\mu$s. We observe the nonlinear steepening of the wave as it propagates. On the right, we see how the pressure profile changes with decreasing relaxation time. The pressure wave for $\tau =0\,\mu$s is computed by solving the Westervelt equation. \\
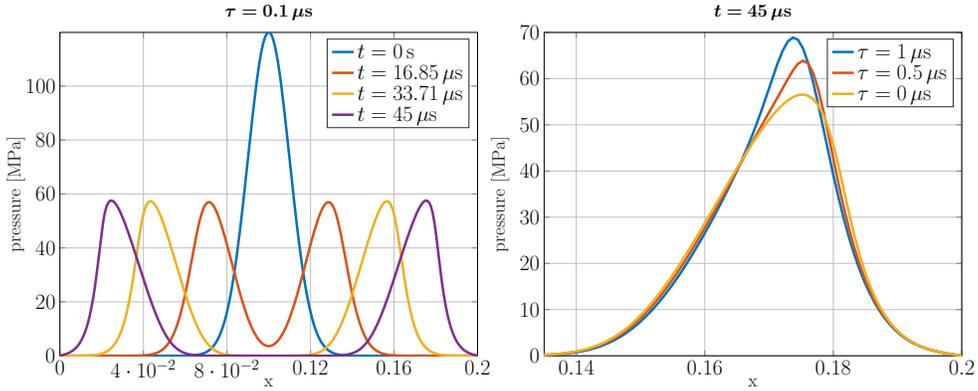
\begin{figure}[h!]
	\begin{center}
%
\definecolor{mycolor1}{rgb}{0.00000,0.44700,0.74100}%
\definecolor{mycolor2}{rgb}{0.85000,0.32500,0.09800}%
\definecolor{mycolor3}{rgb}{0.92900,0.69400,0.12500}%
\definecolor{mycolor4}{rgb}{0.49400,0.18400,0.55600}%
\begin{tikzpicture}[scale=0.475, font=\LARGE]

\begin{axis}[%
width=4.602in,
height=3.566in,
at={(0.772in,0.481in)},
scale only axis,
xmin=0,
xmax=0.200000000000003, xtick={0, 0.04, 0.08, 0.12, 0.16, 0.2},
xlabel style={font=\Large\color{white!15!black}},
xlabel={x},
ymin=0,
ymax=119.904864483392,
ylabel style={font=\Large\color{white!15!black}},
ylabel={pressure [MPa]},
axis background/.style={fill=white},
xmajorgrids,
ymajorgrids,
title style={font=\bfseries},
title={\Large \boldsymbol{$\tau=0.1\,\mu$}s},
legend style={legend cell align=left, align=left, draw=white!15!black}
]
\addplot [color=mycolor1, line width=2.0pt]
  table[row sep=crcr]{%
0	0\\
0.0454183266932233	4.07641344111198e-05\\
0.0478087649402426	0.000146042150319659\\
0.0494023904382459	0.000331252247406155\\
0.0501992031872476	0.00049415521488072\\
0.0509960159362493	0.000732505317756704\\
0.0517928286852651	0.00107894931748831\\
0.0525896414342668	0.0015791892679573\\
0.0533864541832685	0.00229673141635089\\
0.0541832669322702	0.0033191672445696\\
0.0549800796812718	0.00476640528437144\\
0.0557768924302735	0.00680135921086844\\
0.0565737051792894	0.00964369302455736\\
0.0573705179282911	0.0135873251203265\\
0.0581673306772927	0.0190224953136919\\
0.0589641434262944	0.0264632960474813\\
0.0597609561752961	0.0365816523041218\\
0.0605577689242978	0.0502487928785342\\
0.0613545816733136	0.0685852745870932\\
0.0621513944223153	0.0930205840058704\\
0.062948207171317	0.125363229415854\\
0.0637450199203187	0.167882028076178\\
0.0645418326693203	0.223398969408834\\
0.065338645418322	0.295393572692774\\
0.0661354581673237	0.38811804079468\\
0.0669322709163396	0.506721726915004\\
0.0677290836653412	0.657382474932604\\
0.0685258964143429	0.847441272364748\\
0.0693227091633446	1.08553538899044\\
0.0701195219123463	1.38172380148285\\
0.0709163346613479	1.74759728174664\\
0.0717131474103638	2.19636413120905\\
0.0725099601593655	2.74290127245175\\
0.0733067729083672	3.40375937900305\\
0.0741035856573689	4.19711006388694\\
0.0749003984063705	5.14262299572498\\
0.0756972111553722	6.26126130530864\\
0.0764940239043881	7.57498491171461\\
0.0772908366533898	9.10635353747759\\
0.0780876494023914	10.8780242619709\\
0.0788844621513931	12.9121424950669\\
0.0796812749003948	15.2296301903662\\
0.0804780876493965	17.8493808370798\\
0.0812749003984123	20.7873770718925\\
0.082071713147414	24.0557533579842\\
0.0828685258964157	27.6618327363655\\
0.0836653386454174	31.6071727538336\\
0.084462151394419	35.8866608621598\\
0.0860557768924366	45.3895542958401\\
0.0876494023904399	55.9692495642619\\
0.090039840637445	73.0734734485469\\
0.0924302788844642	90.1060088154789\\
0.0940239043824675	100.375917545742\\
0.0948207171314692	104.937649755919\\
0.0956175298804851	109.012360751649\\
0.0964143426294868	112.528559493477\\
0.0972111553784885	115.423004363733\\
0.0980079681274901	117.642592810675\\
0.0988047808764918	119.145980999281\\
0.0996015936254935	119.904864483392\\
0.100398406374495	119.904864483391\\
0.101195219123511	119.145980999281\\
0.101992031872513	117.642592810675\\
0.102788844621514	115.423004363733\\
0.103585657370516	112.528559493476\\
0.104382470119518	109.012360751649\\
0.105179282868519	104.937649755919\\
0.105976095617535	100.375917545742\\
0.107569721115539	90.1060088154785\\
0.109163346613542	78.8584899812607\\
0.112350597609563	55.9692495642613\\
0.113944223107566	45.3895542958393\\
0.115537848605584	35.886660862159\\
0.116334661354585	31.6071727538328\\
0.117131474103587	27.6618327363649\\
0.117928286852589	24.0557533579836\\
0.118725099601591	20.787377071892\\
0.119521912350592	17.8493808370794\\
0.120318725099608	15.2296301903658\\
0.12111553784861	12.9121424950665\\
0.121912350597611	10.8780242619706\\
0.122709163346613	9.1063535374773\\
0.123505976095615	7.57498491171437\\
0.124302788844616	6.26126130530841\\
0.125099601593632	5.14262299572479\\
0.125896414342634	4.19711006388678\\
0.126693227091636	3.40375937900291\\
0.127490039840637	2.74290127245162\\
0.128286852589639	2.19636413120898\\
0.129083665338641	1.74759728174656\\
0.129880478087657	1.38172380148278\\
0.130677290836658	1.08553538899039\\
0.13147410358566	0.847441272364705\\
0.132270916334662	0.657382474932561\\
0.133067729083663	0.506721726914975\\
0.133864541832665	0.388118040794666\\
0.134661354581667	0.29539357269276\\
0.135458167330682	0.223398969408819\\
0.136254980079684	0.167882028076164\\
0.137051792828686	0.125363229415839\\
0.137848605577688	0.0930205840058562\\
0.138645418326689	0.0685852745870932\\
0.139442231075691	0.0502487928785342\\
0.140239043824707	0.0365816523041218\\
0.141035856573708	0.0264632960474813\\
0.14183266932271	0.0190224953136919\\
0.142629482071712	0.0135873251203265\\
0.143426294820713	0.00964369302454315\\
0.144223107569715	0.00680135921086844\\
0.145019920318731	0.00476640528437144\\
0.145816733067733	0.0033191672445696\\
0.146613545816734	0.00229673141635089\\
0.147410358565736	0.0015791892679573\\
0.148207171314738	0.00107894931748831\\
0.149003984063739	0.000732505317756704\\
0.149800796812755	0.00049415521488072\\
0.151394422310759	0.000220646538110714\\
0.152988047808762	9.60510501784029e-05\\
0.156175298804783	1.68665110891197e-05\\
0.16494023904383	8.35561166923071e-08\\
0.200000000000003	0\\
};
\addlegendentry{$t=0\,$s}

\addplot [color=mycolor2, line width=2.0pt]
  table[row sep=crcr]{%
0	0\\
0.0215139442231091	4.16654684229911e-05\\
0.0239043824701213	0.000144594394384967\\
0.0254980079681246	0.000321086235892665\\
0.0262948207171334	0.000473939610465379\\
0.0270916334661351	0.000695134113129825\\
0.0278884462151368	0.00101311473937926\\
0.0286852589641455	0.00146721354433055\\
0.0294820717131472	0.00211141201524612\\
0.0302788844621489	0.00301924273475151\\
0.0310756972111577	0.00429011566851045\\
0.0318725099601593	0.00605740525212894\\
0.032669322709161	0.00849868754114169\\
0.0334661354581698	0.0118485696414936\\
0.0342629482071715	0.0164146031873997\\
0.0350597609561731	0.0225968155788863\\
0.0358565737051819	0.030911421910595\\
0.0366533864541836	0.0420192910843085\\
0.0374501992031853	0.056759724972153\\
0.038247011952194	0.0761900628916905\\
0.0390438247011957	0.101631538434212\\
0.0398406374501974	0.134721685870204\\
0.0406374501992062	0.17747341407847\\
0.0414342629482078	0.232340633842391\\
0.0422310756972095	0.302290037690327\\
0.0430278884462183	0.390878289899632\\
0.04382470119522	0.502333488209523\\
0.0446215139442216	0.641639307995241\\
0.0454183266932304	0.814619731496343\\
0.0462151394422321	1.02802169163029\\
0.0470119521912338	1.28959230670842\\
0.0478087649402426	1.60814662295174\\
0.0486055776892442	1.99362087630078\\
0.0494023904382459	2.45710517762201\\
0.0501992031872476	3.01084814240632\\
0.0509960159362564	3.6682242367851\\
0.051792828685258	4.44365239256931\\
0.0525896414342597	5.35245164930082\\
0.0533864541832685	6.41061612526057\\
0.0541832669322702	7.63448747951038\\
0.0549800796812718	9.04029832221742\\
0.0557768924302806	10.6435551571389\\
0.0565737051792823	12.4582252963073\\
0.057370517928284	14.4956905179164\\
0.0581673306772927	16.7634341208608\\
0.0589641434262944	19.2634424380246\\
0.0597609561752961	21.9903340968392\\
0.0613545816733065	28.0539493211294\\
0.0637450199203187	38.0726724798459\\
0.0653386454183291	44.5839639249041\\
0.0661354581673308	47.5320551369275\\
0.0669322709163325	50.1630088735027\\
0.0677290836653412	52.4090614075672\\
0.0685258964143429	54.2226648471762\\
0.0693227091633446	55.5788431062481\\
0.0701195219123534	56.4740804720148\\
0.0709163346613551	56.9225889654773\\
0.0717131474103567	56.9513286701101\\
0.0725099601593655	56.5950630544668\\
0.0733067729083672	55.8922630828406\\
0.0741035856573689	54.8821593930344\\
0.0749003984063776	53.6028814612297\\
0.0756972111553793	52.0904533256463\\
0.076494023904381	50.3783837233606\\
0.0772908366533898	48.4976260127284\\
0.0788844621513931	44.3421542833594\\
0.0804780876494036	39.8285215860812\\
0.0860557768924295	23.6154496543868\\
0.0876494023904399	19.4310354772038\\
0.0892430278884433	15.6432228983047\\
0.0900398406374521	13.9217319387409\\
0.0908366533864537	12.3247838483346\\
0.0916334661354554	10.8576478963382\\
0.0924302788844642	9.52409454554502\\
0.0932270916334659	8.32649095327954\\
0.0940239043824675	7.26594701993441\\
0.0948207171314763	6.34250822219214\\
0.095617529880478	5.55538708941687\\
0.0964143426294797	4.90322112296704\\
0.0972111553784885	4.38434167856968\\
0.0980079681274901	3.99703624776642\\
0.0988047808764918	3.73978598086067\\
0.0996015936255006	3.61146133226069\\
0.100398406374502	3.61146133224933\\
0.101195219123504	3.73978598086804\\
0.101992031872513	3.9970362477579\\
0.102788844621514	4.38434167857994\\
0.103585657370516	4.90322112295631\\
0.104382470119525	5.55538708942572\\
0.105179282868527	6.3425082221838\\
0.105976095617528	7.26594701994198\\
0.106772908366537	8.32649095327724\\
0.107569721115539	9.52409454554254\\
0.10836653386454	10.857647896338\\
0.109163346613549	12.3247838483263\\
0.109960159362551	13.9217319387489\\
0.111553784860561	17.4824432093486\\
0.113147410358565	21.4791373590944\\
0.114741035856575	25.8273228917877\\
0.117131474103587	32.7714957768728\\
0.120318725099601	42.1184276151609\\
0.121912350597611	46.4767415078724\\
0.123505976095615	50.378383723408\\
0.124302788844624	52.0904533255856\\
0.125099601593625	53.6028814612907\\
0.125896414342627	54.8821593929728\\
0.126693227091636	55.892263082887\\
0.127490039840637	56.5950630544274\\
0.128286852589639	56.9513286701345\\
0.129083665338648	56.922588965466\\
0.129880478087649	56.4740804720108\\
0.130677290836651	55.5788431062508\\
0.13147410358566	54.2226648471732\\
0.132270916334662	52.4090614075643\\
0.133067729083663	50.1630088735001\\
0.133864541832672	47.5320551369312\\
0.134661354581674	44.5839639248924\\
0.136254980079684	38.0726724798329\\
0.138645418326696	28.0539493211287\\
0.1402390438247	21.9903340968369\\
0.141035856573708	19.2634424380246\\
0.14183266932271	16.7634341208592\\
0.142629482071712	14.4956905179162\\
0.143426294820721	12.4582252963062\\
0.144223107569722	10.6435551571386\\
0.145019920318724	9.04029832221679\\
0.145816733067733	7.63448747951009\\
0.146613545816734	6.4106161252602\\
0.147410358565736	5.35245164930049\\
0.148207171314738	4.44365239256915\\
0.149003984063746	3.66822423678489\\
0.149800796812748	3.01084814240618\\
0.15059760956175	2.45710517762188\\
0.151394422310759	1.99362087630065\\
0.15219123505976	1.60814662295165\\
0.152988047808762	1.28959230670836\\
0.153784860557771	1.02802169163022\\
0.154581673306772	0.814619731496315\\
0.155378486055774	0.64163930799522\\
0.156175298804783	0.502333488209487\\
0.156972111553785	0.390878289899625\\
0.157768924302786	0.302290037690312\\
0.158565737051795	0.232340633842377\\
0.159362549800797	0.177473414078463\\
0.160159362549798	0.134721685870197\\
0.160956175298807	0.101631538434205\\
0.161752988047809	0.0761900628916834\\
0.16254980079681	0.056759724972153\\
0.163346613545819	0.0420192910843014\\
0.164143426294821	0.030911421910595\\
0.164940239043823	0.0225968155788863\\
0.165737051792831	0.0164146031873997\\
0.166533864541833	0.0118485696414936\\
0.167330677290835	0.00849868754114169\\
0.168127490039844	0.00605740525212894\\
0.168924302788845	0.00429011566851045\\
0.169721115537847	0.00301924273474441\\
0.170517928286856	0.00211141201524612\\
0.171314741035857	0.00146721354433055\\
0.172111553784859	0.00101311473937926\\
0.172908366533868	0.000695134113129825\\
0.173705179282869	0.000473939610465379\\
0.17529880478088	0.000216154624759213\\
0.176892430278883	9.61130610050986e-05\\
0.180079681274897	1.76094571884278e-05\\
0.188844621513944	9.80669767614017e-08\\
0.200000000000003	0\\
};
\addlegendentry{$t=16.85\, \mu$s}

\addplot [color=mycolor3, line width=2.0pt]
  table[row sep=crcr]{%
0	0\\
0.00159362549801045	0.000504941101105771\\
0.00239043824701213	0.000841757886909988\\
0.0031872509960138	0.00128992335789491\\
0.00398406374502258	0.0019042704781782\\
0.00478087649402426	0.00275650638612035\\
0.00557768924302593	0.00394161233762702\\
0.00637450199203471	0.0055858051238431\\
0.00717131474103638	0.00785647374986809\\
0.00796812749003806	0.0109745528543002\\
0.00876494023904684	0.0152298431836897\\
0.00956175298804851	0.0209998359908923\\
0.0103585657370502	0.0287726386641367\\
0.011155378486059	0.0391746291299597\\
0.0119521912350606	0.0530034826796921\\
0.0127490039840623	0.0712672134549237\\
0.0135458167330711	0.0952298516460033\\
0.0143426294820728	0.126464335939055\\
0.0151394422310744	0.166913140390818\\
0.0159362549800832	0.218957079903156\\
0.0167330677290849	0.285492654818171\\
0.0175298804780866	0.370018211451494\\
0.0183266932270882	0.476729119509649\\
0.019123505976097	0.610622106404911\\
0.0199203187250987	0.777608844801215\\
0.0207171314741004	0.984638853039321\\
0.0215139442231091	1.23983172110793\\
0.0223107569721108	1.5526185586613\\
0.0231075697211125	1.9338923046076\\
0.0239043824701213	2.39616598191267\\
0.024701195219123	2.9537368849629\\
0.0254980079681246	3.62285271008875\\
0.0262948207171334	4.42187203321191\\
0.0270916334661351	5.37140552503057\\
0.0278884462151368	6.49441391085804\\
0.0286852589641455	7.81622160415951\\
0.0294820717131472	9.36437713272077\\
0.0302788844621489	11.1682456073188\\
0.0310756972111577	13.2581486483036\\
0.0318725099601593	15.6637577718275\\
0.032669322709161	18.411298261787\\
0.0334661354581698	21.5189423104338\\
0.0342629482071715	24.9896348806582\\
0.0350597609561731	28.8007713248086\\
0.0366533864541836	37.1475427895464\\
0.038247011952194	45.4327578515066\\
0.0390438247011957	49.0255866624118\\
0.0398406374501974	52.0062962015357\\
0.0406374501992062	54.2912755986459\\
0.0414342629482078	55.8858146539029\\
0.0422310756972095	56.8537321703396\\
0.0430278884462183	57.2820811991423\\
0.04382470119522	57.2577572186508\\
0.0446215139442216	56.8573056704347\\
0.0454183266932304	56.1446523496197\\
0.0462151394422321	55.1721492317089\\
0.0470119521912338	53.9825540928493\\
0.0478087649402426	52.611006897269\\
0.0486055776892442	51.0867154985625\\
0.0501992031872476	47.6748882147469\\
0.051792828685258	43.9065239826024\\
0.0541832669322702	37.8531545257059\\
0.0589641434262944	25.4988032175225\\
0.0605577689243049	21.6130007083164\\
0.0621513944223082	17.9660224442819\\
0.0637450199203187	14.6199062928174\\
0.0653386454183291	11.6249274540749\\
0.0661354581673308	10.2707151812618\\
0.0669322709163325	9.01598958015686\\
0.0677290836653412	7.86219870079712\\
0.0685258964143429	6.80951559124103\\
0.0693227091633446	5.85683779199389\\
0.0701195219123534	5.00182880372881\\
0.0709163346613551	4.24100268838603\\
0.0717131474103567	3.56984952971134\\
0.0725099601593655	2.9829959953415\\
0.0733067729083672	2.47439218815398\\
0.0741035856573689	2.03751379427671\\
0.0749003984063776	1.66556754273192\\
0.0756972111553793	1.35168831051249\\
0.076494023904381	1.08911772425989\\
0.0772908366533898	0.871356536301725\\
0.0780876494023914	0.69228596735811\\
0.0788844621513931	0.546256182812591\\
0.0796812749004019	0.428142724009469\\
0.0804780876494036	0.333373801948447\\
0.0812749003984052	0.25793275642247\\
0.0820717131474069	0.19834069754566\\
0.0828685258964157	0.151624472244698\\
0.0836653386454174	0.115274779548812\\
0.084462151394419	0.0871986509268012\\
0.0852589641434278	0.0656697554442331\\
0.0860557768924295	0.0492791985176595\\
0.0868525896414312	0.0368887348614209\\
0.0876494023904399	0.0275876604690382\\
0.0884462151394416	0.0206541066378705\\
0.0892430278884433	0.0155210363689022\\
0.0900398406374521	0.011746931070995\\
0.0908366533864537	0.00899094033592007\\
0.0916334661354554	0.00699213080332584\\
0.0924302788844642	0.00555239674039143\\
0.0932270916334659	0.00452256684494046\\
0.0940239043824675	0.00379124774520534\\
0.0948207171314763	0.00327597158275239\\
0.095617529880478	0.00291625655376038\\
0.0964143426294797	0.00266823618770218\\
0.0972111553784885	0.00250056448334846\\
0.0988047808764918	0.00232594280308973\\
0.100398406374502	0.00229536382254736\\
0.101992031872513	0.00239135278206959\\
0.102788844621514	0.00250056446737545\\
0.103585657370516	0.00266823620290069\\
0.104382470119525	0.00291625654416805\\
0.105179282868527	0.00327597159135706\\
0.105976095617528	0.00379124773077422\\
0.106772908366537	0.00452256685561991\\
0.107569721115539	0.00555239672911512\\
0.10836653386454	0.00699213081387029\\
0.109163346613549	0.00899094032241265\\
0.109960159362551	0.0117469310801539\\
0.110756972111552	0.0155210363558638\\
0.111553784860561	0.0206541066463757\\
0.112350597609563	0.027587660457705\\
0.113147410358565	0.0368887348663165\\
0.113944223107573	0.0492791985065963\\
0.114741035856575	0.065669755447658\\
0.115537848605577	0.0871986509154468\\
0.116334661354578	0.115274779557943\\
0.117131474103587	0.151624472236115\\
0.117928286852589	0.198340697547494\\
0.118725099601591	0.257932756412799\\
0.119521912350599	0.33337380194704\\
0.120318725099601	0.428142724005497\\
0.121115537848603	0.54625618280717\\
0.121912350597611	0.692285967353648\\
0.122709163346613	0.871356536298244\\
0.123505976095615	1.08911772426188\\
0.124302788844624	1.35168831050018\\
0.125099601593625	1.66556754274121\\
0.125896414342627	2.03751379425613\\
0.126693227091636	2.47439218816302\\
0.127490039840637	2.9829959953232\\
0.128286852589639	3.56984952972986\\
0.129083665338648	4.24100268836261\\
0.129880478087649	5.00182880373862\\
0.130677290836651	5.8568377919727\\
0.13147410358566	6.80951559125509\\
0.132270916334662	7.86219870077541\\
0.133067729083663	9.01598958016321\\
0.133864541832672	10.2707151812507\\
0.134661354581674	11.6249274540743\\
0.136254980079684	14.6199062928176\\
0.137848605577688	17.9660224442707\\
0.139442231075698	21.6130007083029\\
0.141035856573708	25.4988032174485\\
0.143426294820721	31.6192153002457\\
0.147410358565736	41.9286213272112\\
0.149003984063746	45.8268540532299\\
0.15059760956175	49.4343078000017\\
0.151394422310759	51.0867154993161\\
0.15219123505976	52.6110068975898\\
0.152988047808762	53.9825540912545\\
0.153784860557771	55.1721492345637\\
0.154581673306772	56.1446523457267\\
0.155378486055774	56.8573056748679\\
0.156175298804783	57.2577572142794\\
0.156972111553785	57.2820812028462\\
0.157768924302786	56.8537321677517\\
0.158565737051795	55.8858146551553\\
0.159362549800797	54.291275598604\\
0.160159362549798	52.0062962007143\\
0.160956175298807	49.0255866636482\\
0.161752988047809	45.4327578502141\\
0.163346613545819	37.1475427887414\\
0.164940239043823	28.8007713245769\\
0.165737051792831	24.9896348806967\\
0.166533864541833	21.5189423105061\\
0.167330677290835	18.4112982616625\\
0.168127490039844	15.6637577719532\\
0.168924302788845	13.2581486482003\\
0.169721115537847	11.1682456073856\\
0.170517928286856	9.36437713268688\\
0.171314741035857	7.81622160416642\\
0.172111553784859	6.4944139108664\\
0.172908366533868	5.37140552501477\\
0.173705179282869	4.42187203322713\\
0.174501992031871	3.62285271007706\\
0.17529880478088	2.95373688496914\\
0.176095617529882	2.39616598191058\\
0.176892430278883	1.93389230460647\\
0.177689243027892	1.55261855866355\\
0.178486055776894	1.23983172110545\\
0.179282868525895	0.984638853041076\\
0.180079681274897	0.77760884480017\\
0.180876494023906	0.610622106405202\\
0.181673306772907	0.476729119509713\\
0.182470119521909	0.370018211451203\\
0.183266932270918	0.285492654818427\\
0.18406374501992	0.218957079902928\\
0.184860557768921	0.166913140390953\\
0.18565737051793	0.126464335938962\\
0.186454183266932	0.0952298516460388\\
0.187250996015933	0.0712672134549095\\
0.188047808764942	0.0530034826796708\\
0.188844621513944	0.0391746291299953\\
0.189641434262946	0.0287726386641012\\
0.190438247011954	0.0209998359909065\\
0.191235059760956	0.0152298431836755\\
0.192031872509958	0.0109745528543002\\
0.192828685258966	0.00785647374986809\\
0.193625498007968	0.0055858051238431\\
0.19442231075697	0.00394161233762702\\
0.195219123505979	0.00275650638612035\\
0.19601593625498	0.0019042704781782\\
0.196812749003982	0.00128992335789491\\
0.197609561752991	0.000841757886909988\\
0.198406374501992	0.000504941101105771\\
0.200000000000003	0\\
};
\addlegendentry{$t=33.71 \,\mu$s}

\addplot [color=mycolor4, line width=2.0pt]
  table[row sep=crcr]{%
0	0\\
0.00159362549801045	0.33298264725412\\
0.00239043824701213	0.520394744891092\\
0.0031872509960138	0.733557702714698\\
0.00398406374502258	0.982151501306404\\
0.00478087649402426	1.2768261374726\\
0.00557768924302593	1.62947347097414\\
0.00637450199203471	2.05352489519763\\
0.00717131474103638	2.56428386103934\\
0.00796812749003806	3.17928440373888\\
0.00876494023904684	3.91872991236013\\
0.00956175298804851	4.80595086005963\\
0.0103585657370502	5.86796290738161\\
0.011155378486059	7.13609974098096\\
0.0119521912350606	8.64656525830548\\
0.0127490039840623	10.441194961957\\
0.0135458167330711	12.5675983627159\\
0.0143426294820728	15.0789073697379\\
0.0151394422310744	18.03208866107\\
0.0159362549800832	21.4816368827865\\
0.0167330677290849	25.4687041605079\\
0.0175298804780866	29.9950262854585\\
0.0207171314741004	49.6651104021975\\
0.0215139442231091	53.0418389058298\\
0.0223107569721108	55.3405581231824\\
0.0231075697211125	56.7243953782377\\
0.0239043824701213	57.4049461238582\\
0.024701195219123	57.5543609492983\\
0.0254980079681246	57.2959069885151\\
0.0262948207171334	56.7174469495641\\
0.0270916334661351	55.8831868630186\\
0.0278884462151368	54.8412155191293\\
0.0286852589641455	53.6284107520308\\
0.0294820717131472	52.2737358260192\\
0.0310756972111577	49.2277334835927\\
0.032669322709161	45.8459459206904\\
0.0342629482071715	42.2330446124712\\
0.0366533864541836	36.5593045757913\\
0.0414342629482078	25.1263823129389\\
0.0430278884462183	21.5128364512382\\
0.0446215139442216	18.0930269192416\\
0.0462151394422321	14.9190507256151\\
0.0478087649402426	12.0367649196052\\
0.0494023904382459	9.48262602752452\\
0.0501992031872476	8.33659184419309\\
0.0509960159362564	7.28029594946852\\
0.051792828685258	6.31422318316596\\
0.0525896414342597	5.43777214718324\\
0.0533864541832685	4.64925792365801\\
0.0541832669322702	3.9459553974859\\
0.0549800796812718	3.32418222049368\\
0.0557768924302806	2.779420156176\\
0.0565737051792823	2.30646705240495\\
0.057370517928284	1.89961155089741\\
0.0581673306772927	1.55281818893248\\
0.0589641434262944	1.25991219337351\\
0.0597609561752961	1.0147520403853\\
0.0605577689243049	0.811381507687209\\
0.0613545816733065	0.644154196037604\\
0.0621513944223082	0.507827789936144\\
0.062948207171317	0.397627176718807\\
0.0637450199203187	0.309278830394561\\
0.0645418326693203	0.239019867739721\\
0.0653386454183291	0.183586762288272\\
0.0661354581673308	0.140188547992068\\
0.0669322709163325	0.106469531649878\\
0.0677290836653412	0.0804656800451227\\
0.0685258964143429	0.0605583494972208\\
0.0693227091633446	0.0454280387315364\\
0.0701195219123534	0.0340102241837599\\
0.0709163346613551	0.0254545504721193\\
0.0717131474103567	0.0190881729301253\\
0.0725099601593655	0.0143835524882547\\
0.0733067729083672	0.0109307285760636\\
0.0741035856573689	0.00841384432469283\\
0.0749003984063776	0.00659158327875531\\
0.0756972111553793	0.00528108785481152\\
0.076494023904381	0.00434491203294129\\
0.0772908366533898	0.00368056094323066\\
0.0780876494023914	0.00321219940898487\\
0.0788844621513931	0.00288415007331366\\
0.0796812749004019	0.00265584677543984\\
0.0812749003984052	0.00238942810927512\\
0.0828685258964157	0.00226486534356951\\
0.0860557768924295	0.00218212888929514\\
0.0948207171314763	0.00216038204390401\\
0.115537848605577	0.00220784014751985\\
0.117928286852589	0.00231526544030203\\
0.119521912350599	0.00249795662499963\\
0.120318725099601	0.00265584676881758\\
0.121115537848603	0.00288415007207021\\
0.121912350597611	0.00321219940585848\\
0.122709163346613	0.00368056094111324\\
0.123505976095615	0.00434491202119602\\
0.124302788844624	0.00528108785846371\\
0.125099601593625	0.00659158327038512\\
0.125896414342627	0.00841384432606418\\
0.126693227091636	0.0109307285634159\\
0.127490039840637	0.0143835524928519\\
0.128286852589639	0.0190881729138397\\
0.129083665338648	0.0254545504755654\\
0.129880478087649	0.0340102241665221\\
0.130677290836651	0.0454280387389403\\
0.13147410358566	0.0605583494816102\\
0.132270916334662	0.0804656800446253\\
0.133067729083663	0.106469531640947\\
0.133864541832672	0.140188547999756\\
0.134661354581674	0.183586762276484\\
0.135458167330675	0.239019867734797\\
0.136254980079684	0.309278830386553\\
0.137051792828686	0.397627176716718\\
0.137848605577688	0.507827789922317\\
0.138645418326696	0.644154196036425\\
0.139442231075698	0.811381507678206\\
0.1402390438247	1.01475204038128\\
0.141035856573708	1.25991219337089\\
0.14183266932271	1.55281818892778\\
0.142629482071712	1.8996115508825\\
0.143426294820721	2.30646705240122\\
0.144223107569722	2.77942015616188\\
0.145019920318724	3.32418222050867\\
0.145816733067733	3.94595539745991\\
0.146613545816734	4.64925792366983\\
0.147410358565736	5.43777214714849\\
0.148207171314738	6.31422318319171\\
0.149003984063746	7.28029594943737\\
0.149800796812748	8.33659184419407\\
0.15059760956175	9.48262602754167\\
0.15219123505976	12.0367649197854\\
0.153784860557771	14.919050726166\\
0.155378486055774	18.093026920515\\
0.156972111553785	21.5128364537926\\
0.159362549800797	26.9884403592745\\
0.163346613545819	36.5593045867178\\
0.165737051792831	42.2330446072858\\
0.167330677290835	45.8459459286385\\
0.168924302788845	49.2277335133265\\
0.170517928286856	52.2737358809374\\
0.171314741035857	53.6284106869747\\
0.172111553784859	54.841215590079\\
0.172908366533868	55.8831867922787\\
0.173705179282869	56.7174470127662\\
0.174501992031871	57.2959069399986\\
0.17529880478088	57.5543609776804\\
0.176095617529882	57.4049461178037\\
0.176892430278883	56.7243953640201\\
0.177689243027892	55.3405581515031\\
0.178486055776894	53.0418388718653\\
0.179282868525895	49.6651104337544\\
0.180079681274897	45.2656101109934\\
0.182470119521909	29.9950262860884\\
0.183266932270918	25.4687041631649\\
0.18406374501992	21.481636878839\\
0.184860557768921	18.032088664991\\
0.18565737051793	15.0789073665238\\
0.186454183266932	12.5675983649949\\
0.187250996015933	10.4411949605562\\
0.188047808764942	8.64656525901333\\
0.188844621513944	7.13609974074588\\
0.189641434262946	5.86796290733648\\
0.190438247011954	4.80595086023902\\
0.191235059760956	3.91872991213954\\
0.192031872509958	3.17928440394834\\
0.192828685258966	2.56428386086345\\
0.193625498007968	2.05352489533386\\
0.19442231075697	1.62947347087449\\
0.195219123505979	1.27682613754155\\
0.19601593625498	0.982151501261342\\
0.196812749003982	0.733557702741969\\
0.197609561752991	0.520394744876072\\
0.198406374501992	0.332982647261325\\
0.200000000000003	0\\
};
\addlegendentry{$t=45\, \mu$s}

\end{axis}
\end{tikzpicture}%
\hspace*{-1.8mm} 
%
\definecolor{mycolor1}{rgb}{0.00000,0.44700,0.74100}%
\definecolor{mycolor2}{rgb}{0.85000,0.32500,0.09800}%
\definecolor{mycolor3}{rgb}{0.92900,0.69400,0.12500}%
\begin{tikzpicture}[scale=0.475, font=\LARGE]

\begin{axis}[%
width=4.602in,
height=3.566in,
at={(0.772in,0.481in)},
scale only axis,
xmin=0.135,
xmax=0.2, xtick={0.14, 0.16, 0.18, 0.2},
xlabel style={font=\Large\color{white!15!black}},
xlabel={x},
ymin=0,
ymax=70,
ylabel style={font=\Large\color{white!15!black}},
ylabel={pressure [MPa]},
axis background/.style={fill=white},
xmajorgrids,
ymajorgrids,
title style={font=\bfseries},
title={\Large \boldsymbol{$t=45\,\mu$}s},
legend style={legend cell align=left, align=left, draw=white!15!black}
]
\addplot [color=mycolor1, line width=2.0pt]
  table[row sep=crcr]{%
0.134661354581667	0.146586064296613\\
0.135458167330682	0.18835560284225\\
0.136254980079684	0.242349627411315\\
0.137051792828686	0.311432472414978\\
0.137848605577688	0.398920787109162\\
0.138645418326689	0.508593717210971\\
0.139442231075691	0.644685934062437\\
0.140239043824707	0.811860494810134\\
0.141035856573708	1.01515953797782\\
0.14183266932271	1.25993223775913\\
0.142629482071712	1.5517411474158\\
0.143426294820713	1.89624988315489\\
0.144223107569715	2.29909681792516\\
0.145019920318731	2.76576084556142\\
0.145816733067733	3.30142614789561\\
0.146613545816734	3.91085312927024\\
0.147410358565736	4.59826224459182\\
0.148207171314738	5.36723640637396\\
0.149003984063739	6.22064616190237\\
0.149800796812755	7.1606000837083\\
0.150597609561757	8.18842102956577\\
0.151394422310759	9.30464729782888\\
0.15219123505976	10.509056382824\\
0.152988047808762	11.8007081256593\\
0.153784860557764	13.178003616227\\
0.15458167330678	14.6387562687096\\
0.155378486055781	16.1802721140734\\
0.156175298804783	17.7994376324978\\
0.156972111553785	19.4928155954419\\
0.157768924302786	21.2567527581558\\
0.158565737051788	23.0875083621872\\
0.159362549800804	24.9814198966656\\
0.160159362549805	26.9351329062923\\
0.160956175298807	28.9459345675065\\
0.161752988047809	31.0122441218985\\
0.16254980079681	33.1343209903226\\
0.163346613545812	35.3152405671667\\
0.164143426294814	37.5621350141045\\
0.16494023904383	39.8875655838513\\
0.165737051792831	42.3106369998091\\
0.166533864541833	44.8570395641551\\
0.167330677290835	47.5566110479219\\
0.168127490039836	50.4363778078231\\
0.168924302788838	53.5067681848704\\
0.171314741035857	63.2204387767926\\
0.172111553784859	65.9933232286102\\
0.172908366533861	68.0015766726021\\
0.173705179282862	68.893155860443\\
0.174501992031878	68.419937878696\\
0.17529880478088	66.5158529323397\\
0.176095617529882	63.3138543458718\\
0.176892430278883	59.0969952321079\\
0.177689243027885	54.2154532795212\\
0.180079681274904	38.6808501011064\\
0.180876494023906	33.9012765279211\\
0.181673306772907	29.5009416164829\\
0.182470119521909	25.5142506990459\\
0.183266932270911	21.9463266546376\\
0.184063745019927	18.7836996493853\\
0.184860557768928	16.0019971336055\\
0.18565737051793	13.571177047278\\
0.186454183266932	11.4589351493213\\
0.187250996015933	9.6328330770075\\
0.188047808764935	8.06156499757888\\
0.188844621513951	6.71566118165086\\
0.189641434262953	5.56783251743769\\
0.190438247011954	4.59309137879279\\
0.191235059760956	3.76873664786928\\
0.192031872509958	3.07425860348658\\
0.192828685258959	2.49119821906523\\
0.193625498007975	2.00298168558747\\
0.194422310756977	1.59474222641809\\
0.195219123505979	1.2531358172231\\
0.19601593625498	0.966154128970913\\
0.196812749003982	0.722936112308545\\
0.197609561752984	0.513578639756005\\
0.198406374501985	0.328946116757521\\
0.199203187251001	0.160478915635551\\
0.200000000000003	0\\
};
\addlegendentry{$\tau=1 \,\mu$s}

\addplot [color=mycolor2, line width=2.0pt]
  table[row sep=crcr]{%
0.134661354581674	0.172856720498245\\
0.135458167330675	0.223355167723525\\
0.136254980079684	0.287689754065852\\
0.137051792828686	0.3689636538288\\
0.137848605577688	0.470758301143789\\
0.138645418326696	0.597142930632728\\
0.139442231075698	0.752664603556205\\
0.1402390438247	0.942314824253259\\
0.141035856573708	1.17146998119484\\
0.14183266932271	1.44580456399468\\
0.142629482071712	1.77117829076823\\
0.143426294820721	2.15350071562419\\
0.144223107569722	2.59857924885982\\
0.145019920318724	3.11195848009847\\
0.145816733067733	3.69875991456601\\
0.146613545816734	4.36353152591224\\
0.147410358565736	5.11011581080859\\
0.148207171314738	5.94154343588261\\
0.149003984063746	6.85995731119965\\
0.149800796812748	7.8665693690054\\
0.15059760956175	8.96164978156812\\
0.151394422310759	10.1445461438775\\
0.15219123505976	11.4137284488875\\
0.152988047808762	12.7668546078803\\
0.153784860557771	14.200850802321\\
0.154581673306772	15.712000998861\\
0.155378486055774	17.2960404342034\\
0.156175298804783	18.9482485529741\\
0.156972111553785	20.6635377712202\\
0.157768924302786	22.4365352119243\\
0.158565737051795	24.2616554460784\\
0.160159362549798	28.04522222366\\
0.161752988047809	31.967381431434\\
0.164143426294821	38.0063718831429\\
0.167330677290835	46.0924321370293\\
0.168924302788845	50.043451084846\\
0.172111553784859	57.8657083391596\\
0.172908366533868	59.8573535719618\\
0.173705179282869	61.7338451492673\\
0.174501992031871	63.2167883809034\\
0.17529880478088	63.8796671548211\\
0.176095617529882	63.2752748463226\\
0.176892430278883	61.1444505149992\\
0.177689243027892	57.558060013679\\
0.178486055776894	52.8776575157648\\
0.179282868525895	47.5863886139004\\
0.180079681274897	42.1282604912735\\
0.180876494023906	36.8291237350917\\
0.181673306772907	31.8886035480154\\
0.182470119521909	27.4062720372048\\
0.183266932270918	23.4143285651814\\
0.18406374501992	19.9045049511637\\
0.184860557768921	16.8467698261965\\
0.18565737051793	14.2010649562003\\
0.186454183266932	11.9241766938873\\
0.187250996015933	9.97351418933079\\
0.188047808764942	8.309018738396\\
0.188844621513944	6.89400263921642\\
0.189641434262946	5.69539155775256\\
0.190438247011954	4.6836518673806\\
0.191235059760956	3.8325633285635\\
0.192031872509958	3.11892468016689\\
0.192828685258966	2.52223996131303\\
0.193625498007968	2.02440872494373\\
0.19442231075697	1.6094305549778\\
0.195219123505979	1.26312707144896\\
0.19601593625498	0.972880823870206\\
0.196812749003982	0.727388872493371\\
0.197609561752991	0.516428161756579\\
0.198406374501992	0.330629727637863\\
0.199203187250994	0.161259111491646\\
0.200000000000003	0\\
};
\addlegendentry{$\tau=0.5 \, \mu$s}

\addplot [color=mycolor3, line width=2.0pt]
  table[row sep=crcr]{%
0.134661354581674	0.185274706211494\\
0.135458167330675	0.241671214320974\\
0.136254980079684	0.313097443423885\\
0.137051792828686	0.402852550046418\\
0.137848605577688	0.514740283529378\\
0.138645418326696	0.65308097207491\\
0.139442231075698	0.822702938296324\\
0.1402390438247	1.02890842920625\\
0.141035856573708	1.27741009244865\\
0.14183266932271	1.57423568968168\\
0.142629482071712	1.92560119242938\\
0.143426294820721	2.33775534839277\\
0.144223107569722	2.81680204915383\\
0.145019920318724	3.36850971614942\\
0.145816733067733	3.99811915658533\\
0.146613545816734	4.71016222353825\\
0.147410358565736	5.50830320326822\\
0.148207171314738	6.39521281544188\\
0.149003984063746	7.3724817485633\\
0.149800796812748	8.44057680714878\\
0.15059760956175	9.59883910413592\\
0.151394422310759	10.8455202717373\\
0.15219123505976	12.177850342005\\
0.152988047808762	13.5921293358257\\
0.153784860557771	15.0838342674804\\
0.154581673306772	16.647733500972\\
0.155378486055774	18.278001409515\\
0.156175298804783	19.9683275129115\\
0.156972111553785	21.7120155677189\\
0.158565737051795	25.3312639868525\\
0.160159362549798	29.0773331739284\\
0.164940239043823	40.4575127838447\\
0.165737051792831	42.2884887113909\\
0.166533864541833	44.0765281318376\\
0.167330677290835	45.8112142604975\\
0.168127490039844	47.4811192200558\\
0.168924302788845	49.073487874543\\
0.169721115537847	50.5738250983595\\
0.170517928286856	51.9653540110852\\
0.171314741035857	53.2282974721371\\
0.172111553784859	54.338916464448\\
0.172908366533868	55.2682154606617\\
0.173705179282869	55.9801913815294\\
0.174501992031871	56.4294829245964\\
0.17529880478088	56.5582814483026\\
0.176095617529882	56.2925289542742\\
0.176892430278883	55.5380034863114\\
0.177689243027892	54.1786741197141\\
0.178486055776894	52.0840737655347\\
0.179282868525895	49.139662291263\\
0.180079681274897	45.3144103509371\\
0.180876494023906	40.747270826429\\
0.182470119521909	30.8028948126736\\
0.183266932270918	26.171242643418\\
0.18406374501992	22.0427412307828\\
0.184860557768921	18.459076314383\\
0.18565737051793	15.3953896932982\\
0.186454183266932	12.7982119175145\\
0.187250996015933	10.6074432427969\\
0.188047808764942	8.76552185382271\\
0.188844621513944	7.22063255568701\\
0.189641434262946	5.92771222786189\\
0.190438247011954	4.84793576453214\\
0.191235059760956	3.94806181674937\\
0.192031872509958	3.19966000721233\\
0.192828685258966	2.57834591379026\\
0.193625498007968	2.06316901848672\\
0.19442231075697	1.6360397681828\\
0.195219123505979	1.28125985747023\\
0.19601593625498	0.985112934947686\\
0.196812749003982	0.735500955173634\\
0.197609561752991	0.521627740863323\\
0.198406374501992	0.333705538079421\\
0.199203187250994	0.162685480109594\\
0.200000000000003	0\\
};
\addlegendentry{$\tau=0 \,\mu$s}

\end{axis}
\end{tikzpicture}%
\caption{\textbf{(left)} Snapshots of the pressure $u=\varrho \psi_t$ for a fixed relaxation time $\tau=0.1 \, \mu$s \textbf{(right)} Pressure wave for different relaxation parameters $\tau$ at final time. 			\label{fig:snapshots}}
	\end{center}
\end{figure}
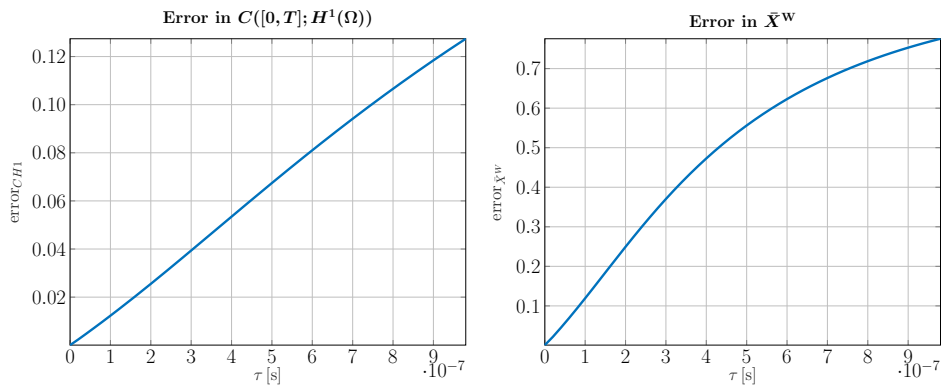
\begin{figure}[h!]
	\begin{center}
%
\definecolor{mycolor1}{rgb}{0.00000,0.44700,0.74100}%
\begin{tikzpicture}[scale=0.45, font=\LARGE]

\begin{axis}[%
width=4.602in,
height=3.566in,
at={(0.772in,0.481in)},
scale only axis,
xmin=1.00000008274037e-10,
xmax=9.80099999992046e-07,
xlabel style={font=\Large\color{white!15!black}},
xlabel={\Large $\tau \, [\textup{s}]$},
ymin=6.42731803469632e-05,
ymax=0.127400486593906,
ylabel style={at={(-0.03,0.5)}, font=\Large\color{white!15!black}},
ylabel={$\textup{error}_{CH1}$},
axis background/.style={fill=white},
y tick label style={
	/pgf/number format/.cd,
	fixed,
	fixed zerofill,
	precision=2,
	/tikz/.cd
},
xmajorgrids,
ymajorgrids,
title style={font=\bfseries},
title={\Large Error in \boldsymbol{$C([0,T]; H^1(\Omega))$}},
legend style={legend cell align=left, align=left, draw=white!15!black}
]
\addplot [color=mycolor1, line width=2.0pt]
  table[row sep=crcr]{%
9.80099999992046e-07	0.127400486593906\\
9.401000000131e-07	0.12296819671296\\
9.00100000006399e-07	0.118429670668816\\
8.60099999999697e-07	0.113784468687131\\
8.20099999992996e-07	0.109032646955549\\
7.80099999986295e-07	0.104174862535778\\
7.40100000007349e-07	0.0992124962822906\\
7.00100000000647e-07	0.094147796131937\\
6.60099999993946e-07	0.0889840429539007\\
6.20099999987245e-07	0.0837257406294813\\
5.80100000008299e-07	0.0783788308945823\\
5.40100000001598e-07	0.0729509313431745\\
5.00099999994896e-07	0.0674515912854545\\
4.40099999998722e-07	0.0590949804980019\\
3.60100000013075e-07	0.0478333985433927\\
2.80099999999672e-07	0.0365809209297603\\
2.40099999992971e-07	0.0310175442428655\\
2.0009999998627e-07	0.0255306384491289\\
1.60100000007324e-07	0.0201455685299709\\
1.20100000000622e-07	0.014884905004411\\
1.0010000001115e-07	0.0123070054582203\\
8.00999999939211e-08	0.0097664017274092\\
6.01000000044483e-08	0.00726435135496456\\
4.00999999872198e-08	0.00480174410407444\\
2.00999999977469e-08	0.00237927898201032\\
1.00000008274037e-10	6.42731803469632e-05\\
};
\end{axis}
\end{tikzpicture}%
\hspace*{2mm} 
%
\definecolor{mycolor1}{rgb}{0.00000,0.44700,0.74100}%
\begin{tikzpicture}[scale=0.45, font=\LARGE]

\begin{axis}[%
width=4.602in,
height=3.566in,
at={(0.772in,0.481in)},
scale only axis,
xmin=1.00000008274037e-10,
xmax=9.80100000047557e-07,
xlabel style={font=\Large\color{white!15!black}},
xlabel={\Large $\tau \, [\textup{s}]$},
ymin=0.000850673042971595,
ymax=0.775506875805468,
ylabel style={at={(-0.01,0.5)}, font=\Large\color{white!15!black}},
ylabel={$\textup{error}_{\bar{X}^W}$},
axis background/.style={fill=white},
xmajorgrids,
ymajorgrids,
title style={font=\bfseries},
title={\Large Error in \boldsymbol{$\bar{X}^{\textup{W}}$}},
legend style={legend cell align=left, align=left, draw=white!15!black}
]
\addplot [color=mycolor1, line width=2.0pt]
  table[row sep=crcr]{%
9.80100000047557e-07	0.775506875805468\\
9.60099999947062e-07	0.770235030351333\\
9.40099999957589e-07	0.764734330988659\\
9.20099999968116e-07	0.758993809393763\\
9.00099999978643e-07	0.753001914304792\\
8.8009999998917e-07	0.746746481742432\\
8.60099999999697e-07	0.740214704568574\\
8.40100000010224e-07	0.733393101622989\\
8.20100000020751e-07	0.726267486737512\\
8.00100000031279e-07	0.71882293802669\\
7.80100000041806e-07	0.711043767962051\\
7.60100000052333e-07	0.702913494879316\\
7.40099999951838e-07	0.694414816738766\\
7.20099999962365e-07	0.685529588172628\\
7.00099999972892e-07	0.676238802111098\\
6.80099999983419e-07	0.666522577603831\\
6.60099999993946e-07	0.656360155833154\\
6.40100000004473e-07	0.645729906808815\\
6.20100000015e-07	0.634609349799713\\
6.00100000025527e-07	0.622975191288366\\
5.80100000036055e-07	0.610803385073181\\
5.60100000046582e-07	0.598069220210226\\
5.40099999946086e-07	0.584747443727482\\
5.20099999956614e-07	0.570812426556258\\
5.00099999967141e-07	0.556238382915484\\
4.80099999977668e-07	0.540999655459236\\
4.60099999988195e-07	0.525071081047394\\
4.40099999998722e-07	0.508428454643195\\
4.20100000009249e-07	0.491049111824449\\
4.00100000019776e-07	0.472912653698107\\
3.80100000030303e-07	0.454001841376463\\
3.60100000040831e-07	0.434303689049905\\
3.40100000051358e-07	0.41381078617108\\
3.20099999950862e-07	0.392522877496462\\
3.0009999996139e-07	0.370448723297579\\
2.80099999971917e-07	0.347608247100277\\
2.60099999982444e-07	0.324034951097278\\
2.40099999992971e-07	0.299778532403848\\
2.20100000003498e-07	0.274907558638198\\
2.00100000014025e-07	0.249511950035212\\
1.60100000035079e-07	0.197623352866078\\
1.20099999945111e-07	0.145293964180653\\
1.00099999955638e-07	0.119412364826836\\
8.00999999661656e-08	0.0939694120943644\\
6.00999999766927e-08	0.0691374911532111\\
4.00999999872198e-08	0.0450620089195959\\
2.00999999977469e-08	0.0218555442874305\\
1.00000008274037e-10	0.000850673042971595\\
};

\end{axis}
\end{tikzpicture}%
\caption{Relative errors for varying relaxation time in \textbf{(left)} $C([0,T]; H^1(\Omega))$ and \textbf{(right)} $\bar{X}^{W}$.
			\label{fig:error}}
	\end{center}
\end{figure}\\
\indent To further illustrate the results from Section~\ref{sec:limits}, we solve equation \eqref{WesterveltMC} with the relaxation time varying over $\tau \in [10^{-4}, 1]\, \mu s$ and compute the difference to the solution of the Westervelt equation \eqref{Westervelt}.  We plot the relative errors in the $\bar{X}^W$ norm, defined in \eqref{XWXKnorms}, and in the $C([0,T]; H^1(\Omega))$ norm:
\begin{equation*}
\begin{aligned}
&  \textup{error}_{\bar{X}^\textup{W}}(\tau)=\dfrac{\|\psi^{\tau}-\bar{\psi}\|_{\bar{X}^{W}}}{\|\bar{\psi}\|_{\bar{X}^{W}}},\quad  \textup{error}_{CH^1}(\tau)=\dfrac{\|\psi^{\tau}-\bar{\psi}\|_{CH^1}}{\|\bar{\psi}\|_{CH^1}};
\end{aligned}
\end{equation*}
see Figure~\ref{fig:error}. The numerical errors decrease with the parameter $\tau$, in agreement with the theoretical results of Theorem~\ref{th:limits}. Figure~\ref{fig:error} even indicates a stronger result, i.e., strong convergence in the $\bar{X}^{\textup{W}}$ norm. For $\tau=10^{-10}\,$s, the errors amount to $\textup{error}_{CH^1}(\tau) \approx 6.43 \cdot 10^{-5}$ and $\textup{error}_{\bar{X}^{W}}(\tau) \approx 8.5 \cdot 10^{-4}$. The error plots also suggest a lower rate of convergence with respect to $\tau$ in the $\bar{X}^{W}$ norm.
\section*{Acknowledgments}
The second author acknowledges the funding provided by the Deutsche Forschungsgemeinschaft under the grant number WO 671/11-1.

\end{document}